\pgfplotsset{compat=newest}
\newcommand{\ChebPol}{\ensuremath{\textnormal{C}\Pi}}
\newcommand{\N}{\ensuremath{\mathbb{N}}}
\newcommand{\T}{\ensuremath{\mathbb{T}}}
\newcommand{\Z}{\ensuremath{\mathbb{Z}}}
\newcommand{\R}{\ensuremath{\mathbb{R}}}
\newcommand{\C}{\ensuremath{\mathbb{C}}}
\newcommand{\ii}{\textnormal{i}}
\newcommand{\e}{\textnormal{e}}
\newcommand{\ceil}[1]{\left\lceil#1\right\rceil}
\newcommand{\zb}[1]{\ensuremath{\boldsymbol{#1}}}
\renewcommand{\ln}{\mathrm{ln\,}}
\newcommand{\boldk}{{\ensuremath{\boldsymbol{k}}}}
\newcommand{\boldj}{{\ensuremath{\boldsymbol{j}}}}
\newcommand{\boldl}{{\ensuremath{\boldsymbol{l}}}}
\newcommand{\boldt}{{\ensuremath{\boldsymbol{t}}}}
\newcommand{\boldh}{{\ensuremath{\boldsymbol{h}}}}
\newcommand{\boldx}{{\ensuremath{\boldsymbol{x}}}}
\newcommand{\boldz}{{\ensuremath{\boldsymbol{z}}}}
\newcommand{\bolda}{{\ensuremath{\boldsymbol{a}}}}
\newcommand{\boldb}{{\ensuremath{\boldsymbol{b}}}}
\newcommand{\boldP}{{\ensuremath{\boldsymbol{P}}}}
\newcommand{\boldc}{{\ensuremath{\boldsymbol{c}}}}
\newcommand{\boldzero}{{\ensuremath{\boldsymbol{0}}}}
\newcommand{\boldone}{{\ensuremath{\boldsymbol{1}}}}
\newcommand{\NISOR}{Not-In-Span-Of-Rest }
\DeclareMathOperator{\Ima}{Im}
\DeclareMathOperator*{\Mirror}{\mathbf{\mathscr{M}}}
\DeclareMathOperator*{\CosSet}{\mathbf{\mathscr{C}}}
\newcommand{\ChebMat}{{\ensuremath{\boldsymbol{C}}}} %
\newcommand{\FourMat}{{\ensuremath{\boldsymbol{F}}}} %
\newcommand{\ConMat}{{\ensuremath{\boldsymbol{B}}}} %
\newcommand{\AMat}{{\ensuremath{\boldsymbol{A}}}} %
\DeclareMathOperator*{\abs}{Abs}
\DeclareMathOperator*{\argmax}{arg\,max}
\DeclareMathOperator*{\SX}{\mathcal{X}}
\DeclareMathOperator*{\ST}{\mathcal{T}}
\newcommand{\ChebCoeff}{\hat{\mathsf{c}}}
\newtheorem{theorem}{Theorem}[section]
\newtheorem{lemma}[theorem]{Lemma}
\newtheorem{remark}[theorem]{Remark}
\newtheorem{example}[theorem]{Example}
\newtheorem{corollary}[theorem]{Corollary}
\def\imod#1{\allowbreak\mkern10mu({\operator@font mod}\,\,#1)}
\numberwithin{equation}{section}
\numberwithin{table}{section}
\numberwithin{figure}{section}
\long\def\symbolfootnote[#1]#2{\begingroup%
\def\thefootnote{\fnsymbol{footnote}}\footnote[#1]{#2}\endgroup}
\newcommand{\sspan}{\textnormal{span}}
\newcommand{\OO}[1]{\mathcal{O}\left(#1\right)}
\renewcommand{\mathbf}[1]{\ensuremath{\boldsymbol{#1}}}
\renewcommand{\textbf}[1]{{\ensuremath{\boldsymbol{#1}}}}
\renewcommand{\thefootnote}{\fnsymbol{footnote}}
\title{Constructing efficient spatial discretizations of 
spans of multivariate Chebyshev polynomials}
\date{June 5, 2024}
\author{
Lutz K\"ammerer\footnotemark[1]}
\newif\ifshowextendedpaperversion
\begin{document}

\maketitle

\begin{abstract}
\small
For an arbitrary given span of high-dimensional multivariate Chebyshev polynomials, an approach to construct spatial discretizations is presented, i.e., the construction of a sampling set that allows for the unique reconstruction of each polynomial of this span.

The approach presented here combines three different types of efficiency.
First, the construction of the spatial discretization should be efficient with respect to the dimension of the span of the Chebyshev polynomials. Second, the number of sampling nodes within the constructed discretizations should be efficient, i.e., the oversampling factors should be reasonable. Third, there should be an efficient method for the unique reconstruction of a polynomial from given sampling values at the sampling nodes of the discretization.

The first two mentioned types of efficiency are also present in constructions based on random sampling nodes, but the lack of structure here causes the inefficiency of the reconstruction method. Our approach uses a combination of cosine transformed rank\mbox{-}1 lattices whose structure allows for applications of univariate fast Fourier transforms for the reconstruction algorithm and is thus a priori efficiently realizable.

Besides the theoretical estimates of numbers of sampling nodes and failure probabilities due to a random draw of the used lattices, we present several improvements of the basic design approach that significantly increases its practical applicability. Numerical tests, which discretize spans of multivariate Chebyshev polynomials depending on up to more than 50 spatial variables, corroborate the theoretical results and the significance of the improvements.
\medskip

\noindent {\textit{Keywords and phrases}} : 
sparse multivariate Chebyshev polynomials, cosine transformed lattice rule, fast Chebyshev transform

\medskip

{\small%
\noindent {\textit{2020 AMS Mathematics Subject Classification}} : 
26C99, %
65T99, %
65D32, %
65D99  
}
\end{abstract}
\footnotetext[1]{
  Chemnitz University of Technology, Faculty of Mathematics, 09107 Chemnitz, Germany\\
  kaemmerer@mathematik.tu-chemnitz.de
}

\medskip

\ifshowextendedpaperversion
\tableofcontents
\newpage
\fi

\section{Introduction}

Polynomial approximation is an important approach in numerical analysis as well as in scientific computing.
In particular, Chebyshev polynomials provide a popular basis of polynomials in univariate as well as multivariate settings \cite{Po67, Tref13,ToTr13,ChDeTrWe18,RaSchw16,PoVo15,PoVo17}.

In lower spatial dimensions $d\in\N$, a standard approach for approximating non-periodic signals using polynomials, cf.\ e.g.\ \cite{GlMa19}, is fixing a maximal degree $N$, sampling the (dilated and shifted) signal at a Cartesian product of one-dimensional grids, i.e., at $(N+1)^d$ sampling nodes in $[-1,1]^d$, and computing the coefficients $(\ChebCoeff_\boldk)_{\boldk\in I}$ of the polynomial
$$
P(\boldx)=\sum_{\boldk\in I}\ChebCoeff_\boldk T_\boldk(\boldx)\,,
$$
by solving a linear system of equations. Here, $I=\left\{\boldk\in\N_0^d\colon\|\boldk\|_\infty\le N\right\}$ is an index set, $\boldx=(x_1,\dots,x_d)^\top$ is a vector of $d\in\N$ variables, and $T_\boldk$ a product of univariate Chebyshev polynomials as defined in \eqref{eq:def_multi_cheb_pol}.
Obviously, even for moderate spatial dimensions $d$ the cardinality of the index set $I$ as well as the cardinality of the Cartesian product grid in spatial domain 
quickly becomes unreasonably large and most applications allow a suitable reduction of the index set $I$ without causing significant deterioration of the approximation quality, see, e.g., \cite{BaNoRi00, ChIwVo21, CoDaLe13, RaSchw16, PoVo17}. 
Most commonly, the corresponding coefficient vector $(\ChebCoeff_\boldk)_{\boldk\in I}$ is computed from samples using a least squares approach, i.e., by solving a normal equation of first kind.
The index set reduction usually leads to less structured index sets and sampling at a complete Cartesian product grid becomes unfavorable due to huge oversampling and excessive computational costs.

Once the possibly less structured index set $I$ of finite cardinality $|I|$ is fixed, the question arises how to efficiently sample the signal in order to guarantee a unique solution of the coefficient vector $(\ChebCoeff_\boldk)_{\boldk\in I}$.
We call a set of sampling nodes $\SX\subset[-1,1]^d$ with finite cardinality $|\SX|$ spatial discretization of the linear span of Chebyshev polynomials $\ChebPol(I):=\sspan\{T_\boldk\colon \boldk\in I\}$, iff the aforementioned least squares approach provides a unique solution, which is given iff the corresponding Chebyshev matrix $\ChebMat(\SX,I)$, cf.~\eqref{eq:cheb_matrix}, has full column rank.

Depending on the individual point of view, discretization approaches are efficient in different ways.
We differ construction efficiency, sample efficiency, and reconstruction efficiency. 
First, approaches that construct sampling sets for adaptively determined index sets $I$ require efficient construction procedures, i.e., the construction of the sampling set should be of manageable complexity.
Second, the number of sampling nodes can be considered efficient if the oversampling factor $|\SX|/|I|$ is not too large. Third, the solution of the least squares approach requires an (approximate) inversion of a matrix-vector product, whose efficiency can be more or less favored by the possible structure of the sampling nodes.

Motivated by the goal of developing a highly efficient dimension-incremental approach for the approximation of non-periodic functions by algebraic polynomials, the aim of this work is to construct spatial discretizations for algebraic polynomials that provide all three types of efficiency mentioned above. Similar to the periodic approach, cf.~\cite{PoVo14, KaPoVo17}, adaptively composed index sets will appear in the non-periodic approach. We are therefore interested in an approach that constructs spatial discretizations for polynomial spaces $\ChebPol(I)$ with arbitrary index sets $I\subset\N_0^d$ of finite cardinality.

Existing sampling approaches based on suitable randomly drawn sampling sets provide construction efficiency as well as sample efficiency, cf.~\cite{CoDaLe13} and columns \textsc{Construction} and $\SX$ in Table~\ref{tab:complexities}.
Due to the associated, generally unstructured Chebyshev matrices, the reconstruction using a conjugate gradient (CG) method suffers from the necessary naive matrix-vector products and is therefore less computationally efficient.
The CG method calculates matrix-vector products with Chebyshev matrices $\ChebMat(\SX,I)$ and their transposed. Since the matrix-vector product with the matrix $\ChebMat(\SX,I)$ corresponds to the evaluation of the polynomial at all sampling nodes of the discretization $\SX$, the computational complexities of (efficient algorithms realizing) the matrix-vector products are given in column \textsc{Evaluation} of Table~\ref{tab:complexities}. 
Note that the complexities specified in this column correspond to those of a single CG iteration.

Further, any linear span of a finite number of multivariate Chebyshev polynomials can be discretized in spatial domain using single cosine transformed rank\mbox{-}1 lattices, resulting in Chebyshev matrices whose inversion can be computed using a fast Fourier transform (FFT) method whose computational effort is almost linear to the number of samples, cf.~\cite{PoVo15, KuoMiNoNu19}. This reconstruction is indeed easy to realize and a single efficiently realizable matrix-vector product. Unfortunately, both the construction efficiency as well as the sample efficiency are far from optimal, cf.~\cite{PoVo15,KuoMiNoNu19} and row CR1L in Table~\ref{tab:complexities}.

The idea of cosine transformed rank\mbox{-}1 lattices is based on the well-known link between the trigonometric system and the Chebyshev system. More precisely, a multivariate polynomial $P\in\ChebPol(I)$, $I\subset\N_0^d$ and $|I|<\infty$, can be regarded as a multivariate trigonometric polynomial $p\in\Pi(J):=\sspan\{\e^{2\pi\ii\boldh\cdot}\colon \boldh\in J\}$ with modified index set $J=\Mirror(I)\subset\Z^d$, cf.~\eqref{eq:cheb_trig}, the so-called mirrored index set $\Mirror(I)$, cf.~\eqref{eq:mirror_operator}.
In fact, a spatial discretization $\ST\subset[0,1)^d$, $|\ST|<\infty$, of $\Pi(\Mirror(I))$, which means that the corresponding Fourier matrix $\FourMat(\ST,\Mirror(I))$, cf.~\eqref{eq:def_FourMat}, is of full column rank, provides a spatial discretization of $\ChebPol(I)$ after cosine transform, cf.\ Theorem~\ref{thm:spatial_discr_per}.
It is also known that sets of rank\mbox{-}1 lattices, suitably drawn at random, provides advantageous spatial discretizations of $\Pi(\Mirror(I))$, cf.~\cite{Kae17}, and thus, can be used as spatial discretizations of $\ChebPol(I)$ after cosine transform. However, a spatial discretization of $\ChebPol(I)$ must fulfill much weaker conditions than (cosine transformed) spatial discretizations of $\Pi(\Mirror(I))$, cf.\ also~\cite{KuoMiNoNu19}.
Accordingly, there is room for improvement which we explore in this paper.
In a first theoretical result, cf.\ Theorem~\ref{thm:prob_bound_T_cheb}, we observe that we can construct spatial discretizations of $\ChebPol(I)$ similar to the construction of spatial discretizations of $\Pi(\Mirror(I))$, cf.~\cite{Kae17}, using a random draw of several rank\mbox{-}1 lattices (and subsequent cosine transform). The significant difference is in the number of drawn rank\mbox{-}1 lattices, which can be significantly reduced from $\lceil C(\ln|\Mirror(I)|-\ln\delta)\rceil$ to $\lceil C(\ln|I|-\ln\delta)\rceil$, where $\delta\in(0,1]$ is a specific upper bound on the failure probability and $C>1/2$ is a universal constant for which $C=2$ seems to be a good choice both theoretically%
 and practically as our numerical tests promise.

The main advantages of the theoretical results in Theorem~\ref{thm:prob_bound_T_cheb} are the observed sample complexity and the obviously simple and highly efficient construction, cf.\ Table~\ref{tab:complexities}. The sample complexity is almost linear in the number $|\Mirror(I)|$. Furthermore, an application of multiple univariate FFTs and a conjugate gradient method provides an efficient reconstruction.
A closer and more practical look at the simple construction approach leads to several improvements, which are presented in three steps in Section~\ref{sec:constr_w_index_set}.
In the course of this, we make sure that the sample complexity is essentially preserved - apart from exceptional cases that are less relevant in practice - and that the upper bound on the failure probability is also preserved.
Certainly, in each step the computational effort of the construction algorithm grows.
In the first improvement step, the construction complexity increases to $\OO{|\Mirror(I)|(d+\log|I|)\log|I|}$.
In each of the subsequent two steps, the computational costs for construction increase by factors in $\OO{(d+\log|I|)\log|I|}$.
Please note that $|\Mirror(I)|\le 2^d|I|$ applies in general and is a worst case estimate. Some estimates on the given sample complexities in column $|\SX|$ of Table~\ref{tab:complexities} 
suffer from this estimate. As a result, the determined evaluation complexities are also affected.
To simplify the comparison, the two terms $|\Mirror(I)|$ and $2^d|I|$ can be used synonymously.

\begin{table}\small
\begin{tabular}{lccc}
\toprule
&\multicolumn{3}{c}{\textsc{Complexities}}\\
\midrule
\textsc{Approach} & $|\SX|$ & \textsc{Construction} & \textsc{Evaluation}\\
\midrule
random nodes \cite{CoDaLe13} & $|\Mirror(I)|\log|\Mirror(I)|$
& $d\,|\Mirror(I)|\log|\Mirror(I)|$
& $|\Mirror(I)|\,|I|\log|\Mirror(I)|$
\\
\rule{0px}{14pt}
CR1L \cite{KuoMiNoNu19}
& $|\Mirror(I)|\,|I|$
& $d\,|\Mirror(I)|\,|I|$
& $|\Mirror(I)|\,|I|\,(\log|\Mirror(I)|)^2$
\\
\midrule
Theorem~\ref{thm:prob_bound_T_cheb} & $|\Mirror(I)| \log|I|$ & $d\,|I|$ & $|\Mirror(I)|\,(d+\log|I|)\log|I|$\\
\rule{0px}{14pt}Section~\ref{sssec:greedy_improvement} & $|\Mirror(I)| \log|I|$ & $|\Mirror(I)|\,(d+\log|I|)\log|I|$ & $|\Mirror(I)|\,(d+\log|I|)\log|I|$\\
\rule{0px}{14pt}Section~\ref{sssec:improvement_simple} & $2^d\,|I|\log|I|$ & $|\Mirror(I)|\,(d+\log|I|)(\log|I|)^3$ & $d\, 2^d\, |I|\,(\log|I|)^2$\\
\rule{0px}{14pt}Section~\ref{sssec:further_improvement} & $2^d\,|I|\log|I|$ & $|\Mirror(I)|\,(d+\log|I|)^2(\log|I|)^3$ & $d\, 2^d\, |I|\,(\log|I|)^2$\\
\bottomrule
\end{tabular}
\caption{\small Complexities of cardinalities $|\SX|$ of spatial discretizations $\SX$ for $\ChebPol(I)$ under the assumption $N_I\lesssim |\Mirror(I)|/\log|I|$, cf.~\eqref{eq:max_occur_powers}, 
construction complexities, and matrix evaluation complexities, i.e., computational complexities of efficient realizations of matrix vector products with matrices $\ChebMat(\SX,I)$ and $\ChebMat(\SX,I)^\top$.
}\label{tab:complexities}
\end{table}

\begin{remark}\label{rem:ds_dependence}
Note that the dependencies on the spatial dimension $d$ of the listed results in Table~\ref{tab:complexities} are based on estimates without additional assumptions on, e.g., the simultaneously active dimensions.
A common assumption, because it is observed in many applications, is that significant individual basis functions of approximations do not actually depend on all $d$ possible variables. In fact an analysis of variance (ANOVA) can reveal significant variables and their interactions, cf.~\cite{RaAl99}. Already available numerical methods for decomposing high-dimensional signals w.r.t.~their variance ranking of the variables and present interactions, cf.~\cite{PoSchmi19, PoSc19b}, in a black-box scenario can benefit from the discretization strategies developed in this paper. However, the estimates on the number of sampling nodes should be refined for this purpose.

If one assumes that each basis function $T_\boldk$ of $\ChebPol(I)$ actually depends on at most $d_s<d$ different variables $x_j$, $j\in\{1,\dots,d\}$, the listed estimates in Table~\ref{tab:complexities} can be reduced significantly. To be more precise, the given dependencies on the dimension $d$ can be mitigated to similar dependencies on $d_s$, since then cardinalities $|\Mirror(I)|$ of the mirrored index set can be bounded from above by $|\Mirror(I)|\le 2^{d_s}|I|$. Moreover, if the index set $I\subset\N_0^d$ fulfills a so-called downward closed or lower set property, cf.~\cite{ChCoMiNoTe15}, it is possible to estimate the cardinality  $|\Mirror(I)|\le |I|^{\ln 3/\ln 2}$, which reduces the given bounds in certain scenarios.

In addition, we need to mention that the component-by-component construction of cosine transformed single rank\mbox{-}1 lattices in \cite{KuoMiNoNu19} is a deterministic approach, i.e., it is not subject to any probability of failure. All other constructions mentioned can be affected by failures that occur with well controllable and low probabilities. Moreover, for given cosine transformed multiple rank\mbox{-}1 lattices determined by Theorem~\ref{thm:prob_bound_T_cheb}, it is relatively easy and computationally affordable to check a sufficient discretization condition. The improved approaches from Section~\ref{sec:constr_w_index_set} are based on this check so that possible failures can be reported at runtime.
\end{remark}

The paper is organized as follows. In Section~\ref{sec:pre} basic facts from linear algebra, the connection of the Chebyshev and the trigonometric system, and rank\mbox{-}1 lattices are collected. Section~\ref{sec:JoiningSamplingSets} presents the general approach for constructing spatial discretizations step by step using what we call \NISOR condition here. A general improvement idea is discussed and, in addition, how the connection between the Chebyshev system and the trigonometric system can be exploited to construct the discretization for $\ChebPol(I)$ in the trigonometric framework.
In Section~\ref{sec:theory} we adapt some results for sampling along multiple rank\mbox{-}1 lattices in the trigonometric system in order to obtain specific basic results for the Chebyshev system. Section~\ref{sec:improvements} presents certain improvements that prove particularly useful in several numerical tests. Some of these tests can be found in Section~\ref{sec:numtestcheb}.

\subsubsection*{Notation}
We use bold Latin uppercase letters as symbols for matrices and bold Latin lowercase letters for vectors and denote the kernel of a $n\times m-$matrix $\AMat$ by $\ker(\AMat)$ and its image by $\Ima(\AMat)$. Moreover, the inner product $\bolda \cdot \boldb$ of two vectors $\bolda=(a_1,\dots,a_d)^\top\in\C^d$ and $\boldb=(b_1,\dots,b_d)^\top\in\C^d$ is defined by $\bolda\cdot\boldb:= \sum_{j=1}^da_j\overline{b_j}$.

We denote by $\delta_{0}\colon \C\to \{0,1\}$, $a\mapsto\begin{cases}1,& a=0,\\ 0, &\text{otherwise},\end{cases}$ the Kronecker delta function and we use
$\|\cdot\|_0\colon\C^d\to\N_0$, $\bolda\mapsto d-\sum_{j=1}^d\delta_{0}(|a_j|)$ for counting the number of nonzero elements within complex, real, or integer valued vectors. For $p\in[1,\infty]$ the mapping $\|\cdot\|_p\colon\C^d\to\R$ is the usual $p$-norm for vectors in $\C^d$.
Furthermore, for $I\subset \N_0^d$ we define the mirror operator
\begin{equation}
\Mirror(I):=\{\boldh=(l_1k_1,\dots,l_dk_d)^\top\colon \boldk\in I, \boldl\in\{-1,1\}^d\}
\label{eq:mirror_operator}
\end{equation}
and for sets $\ST\subset\T^d$, where $\T:=\R/\Z$ is the torus, we define the set of cosine transformed elements of the set $\ST$ by
$$
\CosSet(\ST):=\{\boldx=(\cos(2\pi t_1),\dots, \cos(2\pi t_d))^\top\in[-1,1]^d\colon\boldt\in\ST\}\,.
$$
In addition, we denote the cardinality of a set $I$ by $|I|$ and for $I\subset\R^d$ the symbol
$\abs(I)$ denotes the set
$$
\abs(I):=\{\boldk=|\boldh|\colon \boldh\in I\}\,,
$$
where $|\boldh|:=(|h_1|,\dots, |h_d|)^\top$ is the vector containing the absolute values of the components of $\boldh$. 
Moreover, we define a function computing the next prime number greater than its input by
$$
\operatorname{nextprime}\colon\R\to \N,\qquad x\mapsto\min\{p \colon p>x,\, p\text{ prime}\}\,.
$$
For sets $J$ of finite cardinality $|J|$, we will consider vectors $\bolda=(a_j)_{j\in J}=(a_{j_1},\dots,a_{j_{|J|}})^\top\in\C^{|J|}$, where we assume that the distinct elements $j_1,\dots,j_{|J|}$ of $J$ are in a fixed order. We also assume (the same) fixed orders of sets when indexing matrix elements.

\section{Linear algebra, Fourier vs.\ Chebyshev, rank\mbox{-}1 Lattices}\label{sec:pre}

In this section, we collect basic facts about the ingredients that we combine to obtain the main result of this work. Each of the ingredients is considered more or less separately in this section.

\subsection{Linear algebra}\label{ssec:linalg}

We consider matrices $\AMat\in\C^{n\times m}$ and we denote the $l$th column of the matrix $\AMat$ by $\bolda_l$, $l\in\{1,\dots, m\}$.
If $\bolda_l$ fulfills 
$$\bolda_l\not\in\sspan\left\{\bolda_j\colon j\in\{1,\dots,m\}\setminus\{l\}\right\}\,,$$
we say that
$\bolda_l$ fulfills the \emph{\NISOR condition w.r.t.\ $\AMat$}. This condition will be widely used throughout the work.
In running text we will use the expression \emph{\NISOR condition}, otherwise the formal description.
In cases where the connection to matrix $\AMat$ is clear, we omit "w.r.t.\ $\AMat$".
We compile some basic facts that we can refer to later.

Without loss of generality, we just consider the first columns of matrices due to notation simplification. Clearly, all statements apply analogously to any fixed column of the matrices.

\begin{lemma}\label{lem:span_subset}
Let the matrix $\AMat=\left(a_{kj}\right)_{k=1,\dots,n; j=1,\dots,m}\in\C^{n\times m}$ be given.
For a subset $K\subset\{1,\dots,n\}$ we denote 
$$
\tilde{\bolda}_j=\left(a_{kj}\right)_{k\in K},\; j=1,\dots,m,
$$
where the elements in $K$ are assumed to be naturally ordered.

If there exists a set $K\subset\{1,\dots,n\}$ such that
$$\tilde{\bolda}_1\not\in\sspan\left\{\tilde{\bolda}_j\colon j\in\{2,\dots,m\}\right\}\,,$$
then $\bolda_1\not\in\sspan\left\{\bolda_j\colon j\in\{2,\dots,m\}\right\}$ holds.
\end{lemma}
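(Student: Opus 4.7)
The plan is to argue by contrapositive: show that if $\bolda_1$ does lie in the span of $\bolda_2,\ldots,\bolda_m$, then for every row-subset $K\subset\{1,\dots,n\}$ the restricted vector $\tilde{\bolda}_1$ also lies in the span of $\tilde{\bolda}_2,\ldots,\tilde{\bolda}_m$. Contrapositively, the existence of a $K$ witnessing the \NISOR condition for $\tilde{\bolda}_1$ forces $\bolda_1$ itself to satisfy the \NISOR condition.

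Concretely, I would assume $\bolda_1=\sum_{j=2}^m c_j\bolda_j$ for some coefficients $c_2,\dots,c_m\in\C$. The key observation is that selecting a subset $K$ of row indices is a linear operation (a coordinate projection $\pi_K\colon\C^n\to\C^{|K|}$), and linear operations commute with linear combinations. Therefore applying $\pi_K$ to both sides of the assumed relation yields $\tilde{\bolda}_1=\pi_K(\bolda_1)=\sum_{j=2}^m c_j\pi_K(\bolda_j)=\sum_{j=2}^m c_j\tilde{\bolda}_j$, which directly contradicts the hypothesis that $\tilde{\bolda}_1\notin\sspan\{\tilde{\bolda}_j\colon j\in\{2,\ldots,m\}\}$. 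This contradiction proves the claim.

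There is essentially no obstacle here; the statement is a standard fact about row-projection of linear dependence relations. The only thing to keep straight is the ordering convention for entries of $\tilde{\bolda}_j$ — the ordered restriction must be the same for all $j$ so that a single set of coefficients $c_j$ works on both sides after projection — and the excerpt already fixes this by assuming the elements of $K$ are naturally ordered. Once that is noted, the proof is a single line of algebra wrapped inside a contrapositive argument, and the generalization to an arbitrary fixed column (rather than the first one) is immediate.
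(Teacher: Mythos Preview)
Your proof is correct and follows essentially the same approach as the paper: both argue (contrapositively) that any linear relation $\bolda_1=\sum_{j=2}^m\alpha_j\bolda_j$ survives restriction to the rows in $K$, so the restricted system cannot be unsolvable if the full one is solvable. The paper phrases this as ``a more overdetermined system cannot have a solution if a subsystem already has none,'' while you phrase it via the linearity of the coordinate projection $\pi_K$, but the underlying argument is identical.
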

\begin{proof}
Clearly, if the linear system of equations
$
\tilde{\bolda}_1=\sum_{j\in\{2,\dots,m\}}\alpha_j \tilde{\bolda}_{j}
$
does not allow for an exact solution, then the even more overdetermined linear system of equations
$
\bolda_1=\sum_{j\in\{2,\dots,m\}}\alpha_j \bolda_{j}
$
cannot yield an exact solution.
\end{proof}

Later, we will deal with sampling matrices, i.e., the rows of the matrices will be determined by sampling nodes. Since our construction of sets of sampling nodes is affected by 
duplicates, we need to take into account the disappearance of duplicate rows of matrices. 
To this end, we say that matrices $\AMat^{(1)}\in\C^{n_1\times m}$ and $\AMat^{(2)}\in\C^{n_2\times m}$, $m,n_1,n_2\in\N$, are essentially the same and we denote $\AMat^{(1)}\triangleq\AMat^{(2)}$ iff the sets of the row vectors of both matrices coincide, i.e., by eliminating duplicate rows in both matrices and applying a suitable permutation to the remaining rows of one of the matrices equality of the matrices can be achieved.

\begin{lemma}\label{lem:esential_step_1}
Let $\AMat^{(1)}\in\C^{n\times m}$ be given. We define $\AMat^{(2)}\in\C^{k\times m}$ as the matrix that consists of all $k$ pairwise distinct rows of $\AMat^{(1)}$. In addition, we denote $\bolda^{(i)}_l$ as the $l$th column of $\AMat^{(i)}$, $i=1,2$. Then
$$
\bolda_1^{(1)}\not\in \sspan\{\bolda_l^{(1)}\colon l\in\{2,\dots,m\}\}
\Leftrightarrow
\bolda_1^{(2)}\not\in \sspan\{\bolda_l^{(2)}\colon l\in\{2,\dots,m\}\}
$$
holds.
\end{lemma}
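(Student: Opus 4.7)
The plan is to view the \NISOR condition column-wise as the (un)solvability of a linear system over the rows and then observe that passing between $\AMat^{(1)}$ and $\AMat^{(2)}$ only adds or removes duplicate equations, leaving the solution set unchanged.

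For the implication ``$\Rightarrow$'' I would simply invoke Lemma~\ref{lem:span_subset}. By construction $\AMat^{(2)}$ arises from $\AMat^{(1)}$ by selecting one representative for each distinct row, so there is a set $K\subset\{1,\dots,n\}$ with $|K|=k$ such that $\AMat^{(2)}$ equals (up to a permutation of its rows) the submatrix of $\AMat^{(1)}$ whose rows are indexed by $K$. Hence Lemma~\ref{lem:span_subset}, applied in its contrapositive form, gives that if $\bolda_1^{(2)}$ is \NISOR in $\AMat^{(2)}$, then $\bolda_1^{(1)}$ is \NISOR in $\AMat^{(1)}$.

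For the implication ``$\Leftarrow$'' one argues the other direction: assume a dependence $\bolda_1^{(2)}=\sum_{l=2}^{m}\alpha_l\,\bolda_l^{(2)}$ with some $\alpha_2,\dots,\alpha_m\in\C$. I would show the \emph{same} coefficients produce a dependence in $\AMat^{(1)}$. Fix an arbitrary row index $k\in\{1,\dots,n\}$; since $\AMat^{(2)}$ contains every distinct row of $\AMat^{(1)}$, there is an index $k'\in\{1,\dots,k\}$ with $(a^{(1)}_{kj})_{j=1}^{m}=(a^{(2)}_{k'j})_{j=1}^{m}$. Reading off the $k'$-th component of the assumed dependence then yields $a^{(1)}_{k1}=a^{(2)}_{k'1}=\sum_{l=2}^{m}\alpha_l\,a^{(2)}_{k'l}=\sum_{l=2}^{m}\alpha_l\,a^{(1)}_{kl}$. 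Since $k$ was arbitrary, $\bolda_1^{(1)}=\sum_{l=2}^{m}\alpha_l\,\bolda_l^{(1)}$, i.e., $\bolda_1^{(1)}\in\sspan\{\bolda_l^{(1)}\colon l\in\{2,\dots,m\}\}$.

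Taking contrapositives in both directions yields the claimed equivalence. There is no real obstacle here; the only mildly delicate point is being careful with the row-reordering between $\AMat^{(2)}$ and the submatrix of $\AMat^{(1)}$ indexed by $K$, but this is cosmetic because the \NISOR condition for a column is invariant under row permutations of the matrix.
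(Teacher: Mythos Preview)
Your proof is correct and follows the paper's idea (removing or adding duplicate equations in the linear system $\bolda_1=\sum_{l=2}^{m}\alpha_l\bolda_l$ leaves its solvability unchanged), just spelled out in more detail. Note, however, that you have swapped the direction labels: the paragraph you call ``$\Rightarrow$'' in fact establishes ``$\Leftarrow$'' (\NISOR in $\AMat^{(2)}$ implies \NISOR in $\AMat^{(1)}$, which is the \emph{direct} statement of Lemma~\ref{lem:span_subset}, not its contrapositive), and your paragraph labeled ``$\Leftarrow$'' proves the contrapositive of ``$\Rightarrow$''. These are purely cosmetic slips; the mathematics is sound and both implications are covered.
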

\begin{proof}
Dropping the duplicate rows of the linear system of equations
$$
\bolda_1^{(1)}=\sum_{k=2}^n\alpha_k\bolda_k^{(1)}
$$
does not affect the existence of a solution as well as a possible solution and the remaining linear system of equations coincides with
$\bolda_1^{(2)}=\sum_{k=2}^n\alpha_k\bolda_k^{(2)}$.
\end{proof}

\begin{lemma}\label{lem:essentially_span}
Let $\AMat^{(1)}\in\C^{n_1\times m}$ and $\AMat^{(2)}\in\C^{n_2\times m}$ be given and we assume $\AMat^{(1)}\triangleq\AMat^{(2)}$, i.e.,
the matrices $\AMat^{(1)}$ and $\AMat^{(2)}$ are essentially the same.
Then
$$
\bolda_1^{(1)}\not\in \sspan\{\bolda_l^{(1)}\colon l\in\{2,\dots,m\}\}
\Leftrightarrow
\bolda_1^{(2)}\not\in \sspan\{\bolda_l^{(2)}\colon l\in\{2,\dots,m\}\}\,.
$$
holds.
\end{lemma}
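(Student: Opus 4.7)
The plan is to reduce this to Lemma~\ref{lem:esential_step_1} combined with the obvious invariance under row permutations. Concretely, I would introduce the two ``deduplicated'' matrices: let $\tilde{\AMat}^{(1)}\in\C^{k_1\times m}$ be the matrix consisting of the pairwise distinct rows of $\AMat^{(1)}$, and analogously $\tilde{\AMat}^{(2)}\in\C^{k_2\times m}$ for $\AMat^{(2)}$. By Lemma~\ref{lem:esential_step_1}, the \NISOR condition on the first column transfers between each $\AMat^{(i)}$ and its deduplicated version $\tilde{\AMat}^{(i)}$, so it suffices to show the equivalence between $\tilde{\AMat}^{(1)}$ and $\tilde{\AMat}^{(2)}$.

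Next I would invoke the assumption $\AMat^{(1)}\triangleq\AMat^{(2)}$: by definition the sets of row vectors of $\AMat^{(1)}$ and $\AMat^{(2)}$ coincide, hence so do those of $\tilde{\AMat}^{(1)}$ and $\tilde{\AMat}^{(2)}$. Since $\tilde{\AMat}^{(1)}$ and $\tilde{\AMat}^{(2)}$ have the same set of distinct rows, we get $k_1=k_2=:k$ and there exists a permutation matrix $\boldP\in\{0,1\}^{k\times k}$ with $\tilde{\AMat}^{(2)}=\boldP\tilde{\AMat}^{(1)}$.

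Now the key observation is that for any permutation matrix $\boldP$, a linear equation $\bolda=\sum_{j=2}^m\alpha_j\boldb_j$ has a solution in the $\alpha_j$ if and only if $\boldP\bolda=\sum_{j=2}^m\alpha_j\boldP\boldb_j$ does, because $\boldP$ is invertible (with inverse $\boldP^\top$). Applying this with $\bolda=\tilde{\bolda}_1^{(1)}$ and $\boldb_j=\tilde{\bolda}_j^{(1)}$, and noting that the columns of $\tilde{\AMat}^{(2)}=\boldP\tilde{\AMat}^{(1)}$ are exactly $\boldP$ times the corresponding columns of $\tilde{\AMat}^{(1)}$, the \NISOR condition on the first column of $\tilde{\AMat}^{(1)}$ is equivalent to that on the first column of $\tilde{\AMat}^{(2)}$. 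Chaining the three equivalences (via $\tilde{\AMat}^{(1)}$ and $\tilde{\AMat}^{(2)}$) yields the claim. There is no real obstacle here; the only care needed is to state cleanly that ``same set of rows after deduplication'' produces a genuine permutation matrix, so that the row permutation argument truly applies.
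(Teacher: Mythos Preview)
Your proposal is correct and follows essentially the same route as the paper, which simply states that applying Lemma~\ref{lem:esential_step_1} twice yields the assertion. You have spelled out the intermediate step that the paper leaves implicit, namely that the two deduplicated matrices coincide up to a row permutation and that the \NISOR condition is invariant under such permutations.
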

\begin{proof}
Applying Lemma~\ref{lem:esential_step_1} twice yields the assertion.
\end{proof}

\begin{lemma}\label{lem:matrices} 
Let the matrix $\AMat\in\C^{n\times m}$ of the following form
$$
\AMat=\begin{pmatrix}
\AMat_{11}& \AMat_{12} \\ \AMat_{21} & \AMat_{22} 
\end{pmatrix}
$$
with submatrices $\AMat_{ij}\in\C^{n_i \times m_j}$, $n_i,m_j\in\N_0$, $i\in\{1,2\}$, $j\in\{1,2\}$, $n_1+n_2=n$, $m_1+m_2=m$, be given.
In addition, we assume that
each column of $\AMat_{11}$ fulfills the \NISOR condition with respect to $\begin{pmatrix}
\AMat_{11}& \AMat_{12} 
\end{pmatrix}$.

If the $l$th column of $\AMat_{22}$, $1\le l\le m_2$,  fulfills the \NISOR condition w.r.t.\ $\AMat_{22}$,
then the $(m_1+l)$th column of the matrix $\AMat$ fulfills the \NISOR condition w.r.t.\ the matrix $\AMat$.
\end{lemma}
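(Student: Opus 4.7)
The plan is to proceed by contradiction. Suppose that the $(m_1+l)$th column $\boldc_{m_1+l}$ of $\AMat$ lies in the span of the remaining columns of $\AMat$. Then there exist scalars $\alpha_j\in\C$ ($j\in\{1,\dots,m_1+m_2\}\setminus\{m_1+l\}$) such that
\[
\begin{pmatrix}(\AMat_{12})_l\\ (\AMat_{22})_l\end{pmatrix}
=\sum_{j=1}^{m_1}\alpha_j\begin{pmatrix}(\AMat_{11})_j\\(\AMat_{21})_j\end{pmatrix}
+\sum_{\substack{k=1\\k\ne l}}^{m_2}\alpha_{m_1+k}\begin{pmatrix}(\AMat_{12})_k\\(\AMat_{22})_k\end{pmatrix}.
\]
Splitting this identity into its top $n_1$ rows and bottom $n_2$ rows yields two separate vector equations, and the strategy is to exploit the top one first to force the $\alpha_j$ with $j\le m_1$ to vanish, and then to use the bottom one to derive the final contradiction.

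First I would rearrange the top-block equation as
\[
\sum_{j=1}^{m_1}\alpha_j (\AMat_{11})_j = (\AMat_{12})_l-\sum_{\substack{k=1\\k\ne l}}^{m_2}\alpha_{m_1+k}(\AMat_{12})_k.
\]
If some $\alpha_{j_0}$ with $1\le j_0\le m_1$ were nonzero, I could solve this for $(\AMat_{11})_{j_0}$ and express it as a linear combination of the remaining columns of $\AMat_{11}$ together with columns of $\AMat_{12}$, i.e.\ of the other columns of $\begin{pmatrix}\AMat_{11} & \AMat_{12}\end{pmatrix}$. This would violate the hypothesis that every column of $\AMat_{11}$ satisfies the \NISOR condition w.r.t.\ $\begin{pmatrix}\AMat_{11}& \AMat_{12}\end{pmatrix}$. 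Hence $\alpha_j=0$ for all $j\in\{1,\dots,m_1\}$.

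Substituting these zero coefficients back into the bottom-block equation, I am left with
\[
(\AMat_{22})_l=\sum_{\substack{k=1\\k\ne l}}^{m_2}\alpha_{m_1+k}(\AMat_{22})_k,
\]
which directly contradicts the assumed \NISOR condition for $(\AMat_{22})_l$ w.r.t.\ $\AMat_{22}$. This completes the contradiction and proves the lemma.

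The argument is short and essentially algebraic; the only subtlety I anticipate is being careful that the hypothesis on $\AMat_{11}$ is strong enough to kill the entire top-block contribution without needing any assumption on $\AMat_{21}$, and that the remaining bottom-block equation involves only $\AMat_{22}$-columns so that the second \NISOR assumption applies as stated. No further structural assumptions on $\AMat_{12}$ or $\AMat_{21}$ are required, which is precisely what makes the two-step decoupling clean.
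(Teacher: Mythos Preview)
Your proof is correct and follows essentially the same route as the paper's: both argue by contradiction, use the top $n_1$ rows together with the \NISOR assumption on the columns of $\AMat_{11}$ to force $\alpha_j=0$ for $j\le m_1$, and then read off the contradiction from the bottom $n_2$ rows via the \NISOR assumption on $\AMat_{22}$. Your version is slightly more explicit about the row-splitting step, but the argument is identical in substance.
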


\begin{proof}
We assume the contrary, i.e. under the assumptions of the lemma, we assume that the $(m_1+l)$th column of $\AMat$ does not fulfill 
the \NISOR condition w.r.t.\ the matrix $\AMat$. Accordingly, there exist $\alpha_k\in\C$, $k\in\{1,\dots,m\}\setminus\{m_1+l\}$, such that
$$
\bolda_{m_1+l}=\sum_{\substack{k=1\\k\neq m_1+l}}^m\alpha_k\bolda_k
$$
holds.
Due to the fact that each column of $\AMat_{11}$ fulfills the \NISOR condition w.r.t.\ $\begin{pmatrix}
\AMat_{11}& \AMat_{12} 
\end{pmatrix}$, we observe $\alpha_k=0$ for $k\in\{1,\dots,m_1\}$.
Accordingly, we observe that $\bolda_{m_1+l}\in\sspan\{\bolda_j\colon j\in\{m_1+1,\dots,m\}\setminus\{m_1+l\}\}$ and, in particular, that the $l$th
column of $\AMat_{22}$ is in the span of the other columns of $\AMat_{22}$ 
which is in contradiction to the assumptions.
\end{proof}

\begin{remark}\label{rem:NISOR_at_least_two_columns}
Due to theoretical considerations, we distinguish three types of matrices $\AMat\in\C^{n\times m}$. On the one hand, there are matrices where each column fulfills the \NISOR condition, which means that the matrix has full column rank.
On the other hand, there are matrices where at least one column $\bolda_j$, $j\in\{1,\dots,m\}$, does not fulfill the \NISOR condition, which leads to a distinction of the two other types. First, assuming that there is only a single column which does not fulfill the \NISOR condition, we observe $\bolda_j=\boldzero$. Second, if $\bolda_j\neq\boldzero$, then at least two columns do not fulfill the \NISOR condition. Accordingly, it is not possible that only a single nonzero column does not fulfill the \NISOR condition.
\end{remark}

\subsection{Fourier vs.\ Chebyshev}

For $k\in\N_0$, we define the univariate Chebyshev polynomials
\begin{align*}
T_k\colon [-1,1]\to\R, \qquad x\mapsto 2^{\frac{1-\delta_0(k)}{2}}\cos(k\arccos(x))\,,%
\end{align*}
i.e.,
\begin{align*}
T_k(x)=\begin{cases}1&\text{ $k=0$,}\\\sqrt{2}\cos(k\arccos(x)) & \text{ $k\in\N$,}\end{cases}
\end{align*}
which constitute an orthonormal system in $L^2([-1,1],\rho_1)$, i.e., w.r.t.\ the inner product
$$
\langle\cdot,\cdot\rangle_1\colon L^2([-1,1],\rho_1)\times L^2([-1,1],\rho_1)\to\R,\qquad (f,g)\mapsto\langle f,g\rangle_1:=\int_{-1}^1f(x)g(x)\rho_1(x)\mathrm{d}x
$$
where $\rho_1(x):=(\pi\sqrt{1-x^2})^{-1}$ is the weight function.
For $d\in\N$ and $\boldk\in\N_0^d$, we define the $d$-variate Chebyshev polynomials
\begin{align}
T_\boldk\colon[-1,1]^d\to\R,\qquad \boldx=(x_1,\dots,x_d)^\top\mapsto \prod_{j=1}^dT_{k_j}(x_j)\,,\label{eq:def_multi_cheb_pol}
\end{align}
which constitute an orthonormal system in  $L^2([-1,1]^d,\rho_d)$, i.e., w.r.t.\ the inner product
$$
\langle\cdot,\cdot\rangle_d\colon L^2([-1,1]^d,\rho_d)\times L^2([-1,1]^d,\rho_d)\to\R,\quad (f,g)\mapsto \langle f,g\rangle_d:=\int_{[-1,1]^d}f(\boldx)g(\boldx)\rho_d(\boldx)\mathrm{d}\boldx\,,
$$
where $\rho_d(\boldx)=\prod_{j=1}^d\rho_1(x_j)$.

We observe the equality
\begin{align}
T_\boldk(\boldx)&=\prod_{j=1}^dT_{k_j}(x_j)=\prod_{j=1}^d2^{\frac{1-\delta_0(k_j)}{2}}\cos(k_j\arccos(x_j))=\prod_{\substack{j=1\\k_j\neq 0}}^{d}\frac{\e^{\ii k_j\arccos(x_j)}+\e^{-\ii k_j\arccos(x_j)}}{\sqrt{2}}\nonumber\\
&=2^{-\frac{\|\boldk\|_0}{2}}\prod_{\substack{j=1\\k_j\neq 0}}^d\left(\e^{\ii k_j\arccos(x_j)}+\e^{-\ii k_j\arccos(x_j)}\right)\nonumber\\
&=2^{-\frac{\|\boldk\|_0}{2}}\sum_{\boldh\in\Mirror(\{\boldk\})}\prod_{\substack{j=1\\h_j\neq 0}}^d\e^{\ii h_j\arccos(x_j)}=
\sum_{\boldh\in\Mirror(\{\boldk\})}2^{-\frac{\|\boldh\|_0}{2}}\e^{\ii \boldh\cdot\arccos(\boldx)},
\label{eq:four_cheb_scaling}
\end{align}
where the $\arccos$ is applied to vectors componentwise.

For a Chebyshev coefficient series $\{\ChebCoeff_\boldk\}_{\boldk\in\N_0^d}\in \ell_1(\N_0^d)$, we observe
\begin{align*}
\sum_{\boldk\in\N_0^d}\ChebCoeff_\boldk T_\boldk(\boldx)
=\sum_{\boldh\in\Z^d}2^{-\frac{\|\boldh\|_0}{2}}\,\ChebCoeff_{|\boldh|}\e^{\ii \boldh\cdot\arccos(\boldx)},
\end{align*}
where $|\boldh|$ is the vector of componentwise absolute values of $\boldh$.
Moreover, for an arbitrary $d$-variate polynomial $P\in\ChebPol(I)$, $I\subset\N_0^d$, $|I|<\infty$, i.e., $P(\boldx)=\sum_{\boldk\in I}\ChebCoeff_\boldk T_\boldk(\boldx)$ with $(\ChebCoeff_\boldk)_{\boldk\in I}\in\R^{|I|}$,
we observe
\begin{align}
P(\boldx)&=\sum_{\boldk\in I}2^{-\frac{\|\boldk\|_0}{2}}\,\ChebCoeff_{\boldk}\sum_{\boldh\in\Mirror(\{\boldk\})}\e^{\ii \boldh\cdot\arccos(\boldx)}
=\sum_{\boldk\in I}\sum_{\boldh\in\Mirror(\{\boldk\})}2^{-\frac{\|\boldh\|_0}{2}}\,\ChebCoeff_{|\boldh|}\e^{\ii \boldh\cdot\arccos(\boldx)}\nonumber\\
&=
\sum_{\boldh\in\Mirror(I)}\underbrace{2^{-\frac{\|\boldh\|_0}{2}}\,\ChebCoeff_{|\boldh|}}_{=:\hat{p}_\boldh}\e^{2\pi\ii \boldh\cdot\frac{\arccos(\boldx)}{2\pi}}
=:p\left(\frac{\arccos(\boldx)}{2\pi}\right),\label{eq:cheb_trig}
\end{align}
where $p\colon\T^d\to\C$ is a multivariate trigonometric polynomial. For each $\boldx\in[-1,1]^d$, we have $\boldt=\frac{\arccos(\boldx)}{2\pi}\in[0,1/2]^d$, and consequently, $p(\boldt)=P(\cos(2\pi \boldt))=P(\boldx)$ holds. The similar equality also holds for $\boldt\in[0,1)^d\setminus[0,1/2]^d$, due to the fact that for
\begin{align*}
\tilde{t}_j:=\begin{cases}
t_j, & t_j\in[0,1/2],\\
1-t_j , & t_j\in(1/2,1),
\end{cases}
\end{align*}
we observe $\tilde{\boldt}\in[0,1/2]^d$ and
\begin{align*}
p(\tilde{\boldt})=\sum_{\boldk\in I}2^{\frac{-\|\boldk\|_0}{2}} \ChebCoeff_{\boldk}\sum_{\boldh\in\Mirror(\{\boldk\})}\e^{2\pi\ii \boldh\cdot\tilde{\boldt}}
=\sum_{\boldk\in I}2^{\frac{-\|\boldk\|_0}{2}} \ChebCoeff_{\boldk}\sum_{\boldh\in\Mirror(\{\boldk\})}\e^{2\pi\ii \boldh\cdot{\boldt}}=p(\boldt)
\end{align*}
due to the symmetry of each $\Mirror(\{\boldk\})$.

Moreover, we observe
$$
\boldx=\cos(2\pi\tilde\boldt)=\cos(2\pi\boldt),
$$
due to $\cos(2\pi \tilde{t}_j)=\cos(-2\pi \tilde{t}_j)=\cos(2\pi (1-\tilde{t}_j))=\cos(2\pi t_j)$.
Accordingly, we ascertain $P(\boldx)=P(\cos(2\pi\boldt))=p(\boldt)$ for each $\boldt\in[0,1)^d$ and thus for
all $\boldt\in\T^d$.

For $\mathcal{X}\subset[-1,1]^d$ and $I\subset\N_0^d$, we denote the Chebyshev evaluation matrix by
\begin{equation}
\ChebMat(\SX,I):=
\left(T_\boldk(\boldx)\right)_{\boldx\in\SX,\boldk\in I}\in\R^{|\SX|\times |I|},
\label{eq:cheb_matrix}
\end{equation}
and observe that the evaluation of $P\in\ChebPol(I)$ at all $x\in\SX$ is equivalent to the matrix vector product
$
\ChebMat(\SX,I)\,{\mathbf \ChebCoeff}=\boldP
$, where ${\mathbf \ChebCoeff}=\left(\ChebCoeff_\boldk\right)_{\boldk\in I}$ is the vector of Chebyshev coefficients corresponding to the algebraic polynomial $P$ and $\boldP=(P(\boldx))_{\boldx\in\mathcal{X}}$. Clearly, the unique reconstruction of the polynomial $P$ from sampling values $P(\boldx)$, $\boldx\in\SX$, i.e., the unique reconstruction of $\boldc$ from these sampling values, is guaranteed
if and only if the matrix $\ChebMat(\SX,I)$ has full column rank $|I|$.

Due to the last considerations, we can decompose the matrix $\ChebMat(\mathcal{X},I)$ in two matrices $\FourMat$ and $\ConMat$, where the matrix $\ConMat$ computes the connection between $\ChebCoeff_\boldk$ and $\hat{p}_\boldh$ for all $\boldh\in\Mirror(\{\boldk\})$ and all $\boldk\in I$, cf.~\eqref{eq:cheb_trig}, and $\FourMat$ is a specific Fourier evaluation matrix, which is generally defined by
\begin{equation}
\FourMat(\ST,J)=\left(\e^{2\pi\ii\boldh\cdot\boldt}\right)_{\boldt\in\ST,\boldh\in J}\in\C^{|\ST|\times|J|}\,,
\label{eq:def_FourMat}
\end{equation}
where $\ST\subset\T^d$ is a sampling set and $J\subset\Z^d$ an index set that specifies a set of trigonometric monomials. The matrix $\ConMat$ is given by
$$
\ConMat(I):=\left(b_{\boldh,\boldk}\right)_{\boldh\in\Mirror(I),\boldk\in I}\in\R^{|\Mirror(I)|\times|I|}\,,
$$
where
$$
b_{\boldh,\boldk}:=
\begin{cases}
2^{-\frac{\|\boldk\|_0}{2}},& \boldh\in\Mirror(\{\boldk\}),\\
0,& \text{otherwise}.
\end{cases}
$$
Clearly, for given $\SX\subset[-1,1]^d$, $|\SX|<\infty$, and $\ST:=\{\boldt=\frac{\arccos(\boldx)}{2\pi}\colon \boldx\in\SX\}$
the relation
$$
\ChebMat(\SX,I)\triangleq\FourMat(\ST,\Mirror(I))\ConMat(I)
$$
holds, i.e., $\ChebMat(\SX,I)$ and $\FourMat(\ST,\Mirror(I))\ConMat(I)$ are essentially the same matrices. Due to the considerations above, we also have the same relation for given $\ST\subset\T^d$, $|\ST|<\infty$, and determined $\SX=\CosSet(\ST):=\{\cos(2\pi\boldt)\colon\boldt\in\mathcal{T}\}$.
At this point, we would like to point out that the sets $\ST$ and $\SX$ might not have the same cardinality due to several possible elements $\boldt_1,\boldt_2\in\ST$, $\boldt_1\neq\boldt_2$, for which $\cos(2\pi\boldt_1)=\cos(2\pi\boldt_2)$ holds. However, the sets of the row vectors of the matrices $\ChebMat(\SX,I)$ and $\FourMat(\ST,\Mirror(I))\ConMat(I)$ coincide, which means that the matrices are essentially the same matrices, cf.\ Section~\ref{ssec:linalg}.

The matrix $\ConMat(I)$ is highly sparse and structured. More precisely, each row of $\ConMat(I)$ has exactly one nonzero entry, since each $\boldh\in\Mirror(I)$ is an element of exactly one of the sets $\Mirror(\{\boldk\})$, $\boldk\in I$. As a consequence, $\ConMat(I)$ has full column rank $|I|$ by default. We observe the following facts.
\begin{lemma}\label{lem:C_full_crank}
Let $ \SX\subset[-1,1]^d$ or $\ST\subset\T^d$ as well as $I\subset\N_0^d$ be given. 
Moreover, we assume that $\SX=\CosSet(\ST)$ holds.
Then the matrix $\ChebMat(\SX,I)$ has full column rank $|I|$ if and only if
$\ker(\FourMat(\ST,\Mirror(I)))\cap \Ima(\ConMat(I))=\{\boldzero\}$.
\end{lemma}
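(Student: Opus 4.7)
The plan is to reduce the claim to a purely linear algebraic identity about the kernel of the product $\FourMat(\ST,\Mirror(I))\ConMat(I)$, exploiting that $\ConMat(I)$ is injective.

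First I would invoke the relation $\ChebMat(\SX,I)\triangleq\FourMat(\ST,\Mirror(I))\ConMat(I)$ established in the paragraph just before the lemma (this used $\SX=\CosSet(\ST)$ together with the identity $P(\boldx)=p(\arccos(\boldx)/(2\pi))$ and the symmetry of each $\Mirror(\{\boldk\})$). By Lemma \ref{lem:essentially_span} applied to each column, the \NISOR condition is preserved between essentially equal matrices, so every column of $\ChebMat(\SX,I)$ fulfills the \NISOR condition if and only if every column of $\FourMat(\ST,\Mirror(I))\ConMat(I)$ does. In particular, $\ChebMat(\SX,I)$ has full column rank $|I|$ if and only if $\FourMat(\ST,\Mirror(I))\ConMat(I)$ does, i.e.\ if and only if this product has trivial kernel on $\C^{|I|}$.

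Next I would analyze $\ker(\FourMat(\ST,\Mirror(I))\ConMat(I))$. For any $\boldc\in\C^{|I|}$, clearly $\FourMat(\ST,\Mirror(I))\ConMat(I)\boldc=\boldzero$ is equivalent to $\ConMat(I)\boldc\in\ker(\FourMat(\ST,\Mirror(I)))$. The decisive point is that $\ConMat(I)$ is injective: as noted in the excerpt, each row of $\ConMat(I)$ has exactly one nonzero entry because the sets $\Mirror(\{\boldk\})$, $\boldk\in I$, are pairwise disjoint and cover $\Mirror(I)$; consequently the columns of $\ConMat(I)$ have pairwise disjoint supports and are linearly independent. Hence $\boldc\mapsto\ConMat(I)\boldc$ is a linear bijection from $\C^{|I|}$ onto $\Ima(\ConMat(I))$, and in particular $\ConMat(I)\boldc=\boldzero$ forces $\boldc=\boldzero$.

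Combining both steps, the statement $\ker(\FourMat(\ST,\Mirror(I))\ConMat(I))=\{\boldzero\}$ is therefore equivalent to saying that no nonzero element of $\Ima(\ConMat(I))$ lies in $\ker(\FourMat(\ST,\Mirror(I)))$, which is exactly $\ker(\FourMat(\ST,\Mirror(I)))\cap\Ima(\ConMat(I))=\{\boldzero\}$. I do not anticipate a serious technical obstacle; the only point deserving care is the essentially-equal step, where one has to verify that passing to the row-set (dropping duplicate sampling nodes produced when $\cos(2\pi\boldt_1)=\cos(2\pi\boldt_2)$ for distinct $\boldt_1,\boldt_2\in\ST$) is exactly what Lemma \ref{lem:essentially_span} covers, so that the rank statement really transfers between $\ChebMat(\SX,I)$ and $\FourMat(\ST,\Mirror(I))\ConMat(I)$.
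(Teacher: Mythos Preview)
Your argument is correct and is exactly the elementary linear algebra underlying the paper's one-line proof (``a simple consequence of the Rank-nullity theorem''): you spell out that $\ChebMat(\SX,I)\triangleq\FourMat(\ST,\Mirror(I))\ConMat(I)$ has full column rank iff $\ker(\FourMat\ConMat)=\{\boldzero\}$, and then use the injectivity of $\ConMat(I)$ to rewrite this as $\ker(\FourMat)\cap\Ima(\ConMat)=\{\boldzero\}$. The careful treatment of the essentially-equal step via Lemma~\ref{lem:essentially_span} is appropriate and the whole proof is sound.
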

\begin{proof}
This is a simple consequence of the Rank-nullity theorem.
\end{proof}
The last lemma, states, that each sampling set $\ST\subset\T^d$ which provides
$\ker(\FourMat(\ST,\Mirror(I)))\cap \Ima(\ConMat(I))=\{\boldzero\}$ can be cosine transformed to a sampling set $\SX$, which is then a spatial discretization of $\ChebPol(I)$. Clearly, each sampling set $\ST$ with $\ker(\FourMat(\ST,\Mirror(I)))=\{\boldzero\}$ provides this property.

\begin{theorem}\label{thm:spatial_discr_per}
Let $\ST$ be a spatial discretization of $\Pi(\Mirror(I))$. Then $\SX=\CosSet(\ST)$ is a spatial discretization of $\ChebPol(I)$.
\end{theorem}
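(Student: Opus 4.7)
The plan is short: this statement is an immediate corollary of Lemma~\ref{lem:C_full_crank}, and the only real task is to unpack the two definitions and line them up.

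First, I would unfold the hypothesis. By the definition introduced earlier in the paper, $\ST$ being a spatial discretization of $\Pi(\Mirror(I))$ is exactly the statement that the Fourier evaluation matrix $\FourMat(\ST,\Mirror(I))$ has full column rank $|\Mirror(I)|$. Equivalently, its kernel is trivial, i.e.\ $\ker(\FourMat(\ST,\Mirror(I))) = \{\boldzero\}$.

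Next, I would apply Lemma~\ref{lem:C_full_crank} with this particular $\ST$ and with $\SX := \CosSet(\ST)$, so that the lemma's standing requirement $\SX=\CosSet(\ST)$ is satisfied by construction. Because the kernel is already trivial, the intersection $\ker(\FourMat(\ST,\Mirror(I))) \cap \Ima(\ConMat(I))$ is automatically $\{\boldzero\}$, regardless of what $\Ima(\ConMat(I))$ actually is. The conclusion of Lemma~\ref{lem:C_full_crank} then yields that $\ChebMat(\SX,I)$ has full column rank $|I|$, which by definition means that $\SX$ is a spatial discretization of $\ChebPol(I)$.

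There is no real obstacle. The whole point of the theorem appears to be to record explicitly, and flag for later reference, that the Fourier-side discretization condition is strictly stronger than the Chebyshev-side one: any discretization strategy for $\Pi(\Mirror(I))$ on $\T^d$ can simply be pushed through the cosine map to obtain a valid discretization of $\ChebPol(I)$. The genuinely interesting question, namely whether one can get away with strictly less than full column rank of $\FourMat(\ST,\Mirror(I))$ by exploiting the structure of $\Ima(\ConMat(I))$, is exactly what Lemma~\ref{lem:C_full_crank} leaves room for and what the subsequent constructive sections presumably exploit.
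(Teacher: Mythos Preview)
Your proposal is correct and matches the paper's own proof essentially verbatim: both observe that the hypothesis gives $\ker(\FourMat(\ST,\Mirror(I)))=\{\boldzero\}$, which trivially forces the intersection in Lemma~\ref{lem:C_full_crank} to be $\{\boldzero\}$, and hence $\ChebMat(\SX,I)$ has full column rank. Your closing remark about the theorem's role is also accurate.
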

\begin{proof}
Since $\ST$ is a spatial discretization of $\Pi(\Mirror(I))$, we observe $\ker(\FourMat(\ST,\Mirror(I)))=\{\boldzero\}$ and thus $\ChebMat(\SX,I)\triangleq\FourMat(\ST,\Mirror(I))\ConMat(I)$ is of full column rank due to Lemma~\ref{lem:C_full_crank}.
\end{proof}

Certainly, the case $\ker(\FourMat(\ST,\Mirror(I)))=\{\boldzero\}$ is the most strict sufficient condition that implies $\ker(\FourMat(\ST,\Mirror(I))) \cap \Ima(\ConMat(I))=\{\boldzero\}$. Since satisfying less stringent requirements on the matrix $\FourMat(\ST,\Mirror(I))$ promises a lower number $|\ST|$ and, consequently, a possibly lower number $|\CosSet(\ST)|$ of sampling nodes, we are interested in less stringent but still sufficient conditions on the matrix $\FourMat(\ST,\Mirror(I))$ such that $\ChebMat(\CosSet(\ST),I)$ has full column rank.

\begin{lemma}\label{lem:C_linear independent_columns}
Let $\ST
\subset\T^d$, $|\ST|<\infty$,  and $I\subset\N_0^d$, $|I|<\infty$, be given. We consider the matrices
$\FourMat(\ST,\Mirror(I))$ and $\ConMat(I)$
and the corresponding matrices $\ChebMat(\CosSet(\ST),I)\triangleq \FourMat(\ST,\Mirror(I))\ConMat(I)$.
We denote the columns of the matrices $\ChebMat(\CosSet(\ST),I)$ and $\FourMat(\ST,\Mirror(I))$ as $\boldc_{\boldk}$, $\boldk\in I$, and
$\bolda_{\boldh}$, $\boldh\in\Mirror(I)$.

In addition, we assume that $\bolda_{\boldh}\not\in\sspan\{\bolda_{\boldl}\colon \boldl\in\Mirror(I)\setminus\{\boldh\}\}$ holds. Then we observe $\boldc_{|\boldh|}\not\in\sspan\{\boldc_{\boldl}\colon \boldl\in I\setminus\{|\boldh|\}\}$.
\end{lemma}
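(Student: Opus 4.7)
My plan is to proceed by contradiction, exploiting the extremely sparse and structured nature of $\ConMat(I)$ together with the disjointness of the mirror sets $\Mirror(\{\boldk\})$ for distinct $\boldk\in I$.

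First, by Lemma~\ref{lem:essentially_span}, the \NISOR{}condition for a column of $\ChebMat(\CosSet(\ST),I)$ is equivalent to the same condition for the corresponding column of the genuine matrix product $\FourMat(\ST,\Mirror(I))\,\ConMat(I)$, so it suffices to work with the latter. Next, I would spell out the columns: since each row of $\ConMat(I)$ has exactly one nonzero entry, namely $b_{\boldh,\boldk}=2^{-\|\boldk\|_0/2}$ when $\boldh\in\Mirror(\{\boldk\})$, the $\boldk$th column of $\FourMat(\ST,\Mirror(I))\,\ConMat(I)$ is
\begin{equation*}
\boldc_\boldk \;=\; 2^{-\|\boldk\|_0/2}\sum_{\boldg\in\Mirror(\{\boldk\})}\bolda_{\boldg}.
\end{equation*}
Set $\boldk^*:=|\boldh|\in I$, so in particular $\boldh\in\Mirror(\{\boldk^*\})$ and $\|\boldh\|_0=\|\boldk^*\|_0$.

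Now I suppose for contradiction that $\boldc_{\boldk^*}=\sum_{\boldk\in I\setminus\{\boldk^*\}}\alpha_\boldk\boldc_\boldk$ for some $\alpha_\boldk\in\C$. Substituting the column expansion on both sides and isolating $\bolda_{\boldh}$ gives
\begin{equation*}
\bolda_{\boldh} \;=\; -\sum_{\boldg\in\Mirror(\{\boldk^*\})\setminus\{\boldh\}}\bolda_{\boldg} \;+\; \sum_{\boldk\in I\setminus\{\boldk^*\}} 2^{(\|\boldk^*\|_0-\|\boldk\|_0)/2}\,\alpha_\boldk\sum_{\boldg\in\Mirror(\{\boldk\})}\bolda_{\boldg}.
\end{equation*}
The decisive structural observation is that $\Mirror(\{\boldk\})\cap\Mirror(\{\boldk'\})=\emptyset$ whenever $\boldk\neq\boldk'$ in $\N_0^d$, because every $\boldg\in\Mirror(\{\boldk\})$ satisfies $|\boldg|=\boldk$; consequently none of the indices $\boldg$ appearing on the right-hand side equals $\boldh$. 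This exhibits $\bolda_{\boldh}$ as an element of $\sspan\{\bolda_\boldl\colon\boldl\in\Mirror(I)\setminus\{\boldh\}\}$, contradicting the hypothesis on $\bolda_{\boldh}$. Hence no such linear combination exists, proving $\boldc_{|\boldh|}\notin\sspan\{\boldc_\boldl\colon\boldl\in I\setminus\{|\boldh|\}\}$.

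There is no real obstacle here beyond bookkeeping; the only substantive point is the disjointness of the mirror sets, which is what allows the \NISOR{}property of a single Fourier column $\bolda_{\boldh}$ to propagate to the corresponding aggregated Chebyshev column $\boldc_{|\boldh|}$ without interference from the other $\boldk\in I$.
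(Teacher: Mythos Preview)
Your proof is correct and takes essentially the same approach as the paper: both compute the column formula $\boldc_\boldk' = 2^{-\|\boldk\|_0/2}\sum_{\boldg\in\Mirror(\{\boldk\})}\bolda_{\boldg}$, exploit the disjointness of the mirror sets $\Mirror(\{\boldk\})$ to argue that a linear relation among the $\boldc_\boldk'$ would force $\bolda_\boldh$ into the span of the remaining $\bolda_\boldl$, and invoke Lemma~\ref{lem:essentially_span} to pass between $\FourMat(\ST,\Mirror(I))\ConMat(I)$ and $\ChebMat(\CosSet(\ST),I)$. The only cosmetic difference is that you invoke Lemma~\ref{lem:essentially_span} at the outset and write out the contradiction explicitly, whereas the paper states the conclusion for $\boldc_\boldk'$ more tersely and applies the lemma at the end.
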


\begin{proof}
Due to the definition of the matrix $\ConMat(I)$, we observe for $j\in\{1,\dots,|\ST|\}$ and $\boldk\in I$
\begin{align*}
c_{j,\boldk}'&:=\sum_{\boldl\in\Mirror(I)}a_{j,\boldl} b_{\boldl,\boldk}=
\sum_{\boldl\in\Mirror(I)}a_{j,\boldl}2^{-\frac{\|\boldk\|_0}{2}}\delta_0(\|\boldk-|\boldl|\|)\\
&=2^{-\frac{\|\boldk\|_0}{2}}\sum_{\boldl\in\Mirror(\{\boldk\})}a_{j,\boldl}
\end{align*}
which yields that the $\boldk$th column $\boldc_{\boldk}'$ of $\FourMat(\ST,\Mirror(I))\ConMat(I)$ is given by  $\boldc_{\boldk}'= 2^{-\frac{\|\boldk\|_0}{2}}\sum_{\boldl\in\Mirror(\{\boldk\})}\bolda_{\boldl}$.
Due to the fact that each $\bolda_{\boldl}$, $\boldl\in\Mirror(I)$, contributes to exactly one $\boldc_{\boldk}'$, $\boldk\in I$, and each of the $\boldc_{\boldk}'$ is such a linear combination of a specific set of 
$\bolda_{\boldl}$, $\boldl\in\Mirror(\{\boldk\})$ , we observe $\boldc_{|\boldh|}'\not\in\sspan\{\boldc_{\boldl}'\colon \boldl\in I\setminus\{|\boldh|\}\}$. The matrix $\ChebMat(\CosSet(\ST),I)$ is essentially the same as the matrix consisting of the columns $\boldc_{\boldk}'$, $\boldk\in I$, i.e., applying Lemma~\ref{lem:essentially_span}, we observe $\boldc_{|\boldh|}\not\in\sspan\{\boldc_{\boldl}\colon \boldl\in I\setminus\{|\boldh|\}\}$.
\end{proof}

\begin{remark}\label{rem:weakened_assumption}The assumption $\bolda_{\boldh}\not\in\sspan\{\bolda_{\boldl}\colon \boldl\in\Mirror(I)\setminus\{\boldh\}\}$ in Lemma~\ref{lem:C_linear independent_columns}
can be weakened slightly to
$\bolda_{\boldh}\not\in\sspan\{\,\bolda_{\boldl}\colon \boldl\in\Mirror(I\setminus\{|\boldh|\})\,\}$ in combination with the assumption $\boldc_{|\boldh|}'\cdot\bolda_{\boldh}\neq 0$. We obtain the same result, i.e.,
 $\boldc_{|\boldh|}\not\in\sspan\{\boldc_{\boldl}\colon \boldl\in I\setminus\{|\boldh|\}\}$.
\end{remark}

Lemma~\ref{lem:C_linear independent_columns}
provides a sufficient condition on the columns of matrices $\FourMat(\ST,\Mirror(I))$ which implies that $\ChebMat(\CosSet(\ST),I)$ are full column rank matrices.
In particular, it is not necessary that all the columns of $\FourMat(\ST,\Mirror(I))$ are linearly independent, i.e., $\ker(\FourMat(\ST,\Mirror(I)))=\{\boldzero\}$. It is enough that there exists a set $J\subset\Mirror(I)$ such that
\begin{itemize}
\item for each $\boldh\in J$ the matrix column $\bolda_\boldh\not\in\sspan\{\bolda_\boldl\colon\boldl\in\Mirror(I)\setminus\{\boldh\}\}$ and
\item $I\subset 
\abs(J)$, which implies equality of these two sets.
\end{itemize}
Then, each column of $\ChebMat(\CosSet(\ST),I)$ is not in the span of all other columns, i.e., the columns of 
$\ChebMat(\CosSet(\ST),I)$ are linearly independent.

\begin{corollary}\label{cor:matrix_C_full_crank}
Let an index set $I\subset\N_0^d$, $|I|<\infty$, and a sampling set $\ST\subset\T^d$, $|\ST|<\infty$, be given. We denote the columns of $\FourMat(\ST,\Mirror(I))$ by $\bolda_\boldh$, $\boldh\in \Mirror(I)$.
We assume that for each $\boldk\in I$ there exists $\bolda_{\boldh}$, $\boldh\in\Mirror(\{\boldk\})$, such that $\bolda_{\boldh}\not\in\sspan\{\bolda_{\boldl}\colon \boldl\in\Mirror(I)\setminus\{\boldh\}\}$ holds. Then the matrix $\ChebMat(\CosSet(\ST),I)
$ has full column rank.
\end{corollary}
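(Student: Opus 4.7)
The plan is to deduce the corollary as an almost immediate consequence of Lemma~\ref{lem:C_linear independent_columns} by applying it once to each index $\boldk\in I$. Concretely, I fix an arbitrary $\boldk\in I$, invoke the hypothesis to obtain some $\boldh\in\Mirror(\{\boldk\})$ with $\bolda_{\boldh}\notin\sspan\{\bolda_{\boldl}\colon \boldl\in\Mirror(I)\setminus\{\boldh\}\}$, and then read off the conclusion of Lemma~\ref{lem:C_linear independent_columns} for this particular $\boldh$.

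The key observation is that $\boldh\in\Mirror(\{\boldk\})$ implies $|\boldh|=\boldk$ by the definitions of $\Mirror$ in \eqref{eq:mirror_operator} and of the componentwise absolute value. Denoting, as in Lemma~\ref{lem:C_linear independent_columns}, the columns of $\ChebMat(\CosSet(\ST),I)$ by $\boldc_\boldk$, $\boldk\in I$, the lemma then yields
$$
\boldc_{\boldk}=\boldc_{|\boldh|}\notin\sspan\{\boldc_{\boldl}\colon \boldl\in I\setminus\{|\boldh|\}\}=\sspan\{\boldc_{\boldl}\colon \boldl\in I\setminus\{\boldk\}\}.
$$
In other words, the $\boldk$th column of $\ChebMat(\CosSet(\ST),I)$ satisfies the \NISOR condition w.r.t.\ $\ChebMat(\CosSet(\ST),I)$.

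Since the choice of $\boldk\in I$ was arbitrary, every column of $\ChebMat(\CosSet(\ST),I)$ fulfills the \NISOR condition, which is exactly the statement that these $|I|$ columns are linearly independent. Hence $\ChebMat(\CosSet(\ST),I)\in\R^{|\CosSet(\ST)|\times |I|}$ has full column rank $|I|$, as claimed.

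I do not anticipate a real obstacle here, since all the substantive work (the translation of the trigonometric \NISOR condition on columns of $\FourMat(\ST,\Mirror(I))$ to the corresponding \NISOR condition on columns of $\ChebMat(\CosSet(\ST),I)$) has already been carried out in Lemma~\ref{lem:C_linear independent_columns} via the structure of $\ConMat(I)$ and the essential-equality relation $\ChebMat(\CosSet(\ST),I)\triangleq\FourMat(\ST,\Mirror(I))\ConMat(I)$. The only step worth highlighting is the identification $|\boldh|=\boldk$ for $\boldh\in\Mirror(\{\boldk\})$, which ensures that picking one suitable $\boldh$ per $\boldk$ covers all columns of the Chebyshev matrix exactly once.
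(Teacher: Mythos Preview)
Your proof is correct and follows essentially the same approach as the paper: the paper does not give a formal proof of this corollary but deduces it in the paragraph immediately preceding the statement, by applying Lemma~\ref{lem:C_linear independent_columns} to each $\boldk\in I$ and noting that the resulting \NISOR condition for every column $\boldc_\boldk$ amounts to linear independence. Your observation that $\boldh\in\Mirror(\{\boldk\})$ gives $|\boldh|=\boldk$ makes explicit the step the paper leaves implicit.
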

Corollary \ref{cor:matrix_C_full_crank} yields a construction concept for a spatial discretization of spans of Chebyshev polynomials. We just need to construct a sampling set $\mathcal{T}\subset\T^d$ such that for each $\boldk\in I$ at least one of the columns $\bolda_{\boldh}$, $\boldh\in\Mirror(\{\boldk\})$, is not in the span of all other columns $\bolda_{\boldl}$, $\boldl\in\Mirror(I)\setminus\{\boldh\}$.
Then the sampling set $\CosSet(\ST)=\{\cos(2\pi \boldt)\colon \boldt\in\mathcal{T}\}$ is a spatial discretization of $\ChebPol(I)$.

\subsection{rank\mbox{-}1 Lattices}
One possibility for discretizing multivariate trigonometric polynomials are so-called rank\mbox{-}1 lattices, cf.~\cite{Kae2013, KuoMiNoNu19}. For a given \emph{generating vector} $\boldz\in\Z^d$ and \emph{lattice size} $M\in\N$, we call the set
$$\Lambda(\boldz,M):=\left\{\frac{j}{M}\boldz\bmod \boldone\colon j\in\{0,\dots,M-1\}\right\}\subset\T^d$$
of sampling nodes in $\T^d$ \emph{rank\mbox{-}1 lattice}, cf. also~\cite{SlJo94}. Here the $\bmod\;\boldone$ means that we map each component of a vector to its fractional part. However, due to the periodicity of trigonometric polynomials the modulo part of the definition doesn't matter for the rest of the paper.
For given Fourier coefficients $\hat{p}_\boldk$, $\boldk\in J\subset\Z^d$ and $|J|<\infty$, the evaluation of a trigonometric polynomial $p\in\Pi(J)$ at all nodes of a rank-lattice $\Lambda(\boldz,M)$, i.e., the matrix vector product $\FourMat(\Lambda(\boldz,M),J)(\hat{p}_\boldk)_{\boldk\in J}$, simplifies to a univariate FFT of length $M$ due to
$$
p\left(\frac{j}{M}\boldz\right)=\sum_{\boldk\in J}\hat{p}_\boldk\e^{2\pi\ii\frac{j}{M}\boldk\cdot\boldz}=\sum_{l=0}^M\sum_{\boldk\cdot\boldz\equiv l\imod{M}}\hat{p}_\boldk
\e^{2\pi\ii\frac{j}{M}\boldk\cdot\boldz}=\sum_{l=0}^M\underbrace{\left(\sum_{\boldk\cdot\boldz\equiv l\imod{M}}\hat{p}_\boldk\right)}_{\hat{g}_l}
\e^{2\pi\ii\frac{jl}{M}}.
$$
The pre-calculation of all $\hat{g}_l$ requires a computational effort in $\OO{d|J|}$ and the subsequent univariate FFT in $\OO{M\log M}$, i.e., the evaluation requires $\OO{M\log M + d|J|}$ in total. The corresponding adjoint evaluation, i.e., an matrix vector product with the conjugate transpose of $\FourMat(\Lambda(\boldz,M),J)$, can also be realized using a univariate FFT and has the same computational complexity. Moreover, for sampling sets $\ST$ consisting of $K$ rank\mbox{-}1 lattices $\Lambda(\boldz_1,M_1),\dots,\Lambda(\boldz_K,M_K)$ , the computation of matrix vector products for matrices $\FourMat(\ST,J)$ and its conjugate transpose is realizable in
$\OO{\sum_{\ell=1}^K|M_\ell|\log M_\ell + Kd|J|}$ floating point operations, cf.~\cite{Kae16}, and is therefore highly efficient with regard to the size of the matrices.

We recall a basic fact about Fourier matrices with a rank\mbox{-}1 lattice as sampling scheme.

\begin{lemma}\label{lem:basic_aliasing}
Let $J\subset\Z^d$, $|J|<\infty$, be an index set and $\boldz\in\Z^d$ and $M\in\N$ generating vector and size of the rank\mbox{-}1 lattice $\Lambda(\boldz,M)$.
Then two columns $\bolda_{\boldh_1}$ and $\bolda_{\boldh_2}$, $\boldh_1,\boldh_2\in J$, of the matrix $\FourMat(\Lambda(\boldz,M),J)$
\begin{itemize}
\item either coincide, i.e., $\bolda_{\boldh_1}=\bolda_{\boldh_2}$ and $\boldh_1\cdot\boldz\equiv\boldh_2\cdot\boldz\imod{M}$,
\item or they are orthogonal, i.e., $\bolda_{\boldh_1}\cdot \bolda_{\boldh_2}=0$ and $\boldh_1\cdot\boldz\not\equiv\boldh_2\cdot\boldz\imod{M}$.
\end{itemize}
\end{lemma}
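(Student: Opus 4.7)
The plan is direct and computational. I would write out the $\boldh$th column of $\FourMat(\Lambda(\boldz,M),J)$ explicitly, using the definition of the rank\mbox{-}1 lattice: ordering the lattice nodes as $\boldt_j=(j/M)\boldz\bmod\boldone$ for $j=0,\dots,M-1$, one has
$$
\bolda_\boldh=\bigl(\e^{2\pi\ii (j/M)\,\boldh\cdot\boldz}\bigr)_{j=0}^{M-1}\in\C^{M}.
$$
The entries depend on $\boldh$ only through the residue $\boldh\cdot\boldz\bmod M$, which already suggests the dichotomy.

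For the first case, if $\boldh_1\cdot\boldz\equiv \boldh_2\cdot\boldz\imod{M}$, then $(j/M)(\boldh_1-\boldh_2)\cdot\boldz\in\Z$ for every $j$, so $\e^{2\pi\ii (j/M)\boldh_1\cdot\boldz}=\e^{2\pi\ii (j/M)\boldh_2\cdot\boldz}$ entrywise, giving $\bolda_{\boldh_1}=\bolda_{\boldh_2}$.

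For the second case, if $\boldh_1\cdot\boldz\not\equiv \boldh_2\cdot\boldz\imod{M}$, I would use the paper's inner product $\bolda\cdot\boldb:=\sum_j a_j\overline{b_j}$ to obtain
$$
\bolda_{\boldh_1}\cdot\bolda_{\boldh_2}=\sum_{j=0}^{M-1}\e^{2\pi\ii (j/M)(\boldh_1-\boldh_2)\cdot\boldz}.
$$
Setting $q:=(\boldh_1-\boldh_2)\cdot\boldz\in\Z$, the assumption gives $q\not\equiv 0\imod{M}$, so $\zeta:=\e^{2\pi\ii q/M}$ is an $M$th root of unity with $\zeta\neq 1$. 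The geometric sum then collapses to $(\zeta^M-1)/(\zeta-1)=0$, proving orthogonality.

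There is essentially no obstacle here; the result is a two-line consequence of the orthogonality of $M$th roots of unity once the column entries are written down. The only item worth a line of prose is noting that the two cases are exhaustive and mutually exclusive, and that in the second case $\zeta\neq 1$ while $\zeta^M=1$ is exactly what makes the geometric sum vanish.
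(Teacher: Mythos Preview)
Your proposal is correct and follows essentially the same approach as the paper: both arguments write out the inner product as a geometric sum $\sum_{j=0}^{M-1}\bigl(\e^{2\pi\ii(\boldh_1-\boldh_2)\cdot\boldz/M}\bigr)^j$ and use the standard roots-of-unity dichotomy to conclude that it equals $M$ or $0$ according to whether $(\boldh_1-\boldh_2)\cdot\boldz\equiv 0\imod{M}$ or not. The paper is slightly terser, but the content is identical.
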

\begin{proof}
This is a simple consequence of the formula for the sum of the first $M$ terms of a geometric series, since 
$$\bolda_{\boldh_1}\cdot \bolda_{\boldh_2}=\sum_{j=0}^{M-1}\left(\e^{2\pi\ii\frac{(\boldh_1-\boldh_2)\cdot\boldz}{M}}\right)^j=\begin{cases}
0 & \frac{(\boldh_1-\boldh_2)\cdot\boldz}{M}\not\in\Z\\
M & \frac{(\boldh_1-\boldh_2)\cdot\boldz}{M}\in\Z
\end{cases}
=\begin{cases}
0 & \boldh_1\cdot\boldz\not\equiv\boldh_2\cdot\boldz\imod{M}\\
M & \boldh_1\cdot\boldz\equiv\boldh_2\cdot\boldz\imod{M}.
\end{cases}
$$
In the case $\boldh_1\cdot\boldz\equiv\boldh_2\cdot\boldz\imod{M}$, the columns $\bolda_{\boldh_1}$ and $\bolda_{\boldh_2}$ coincide.
\end{proof}

\begin{remark}\label{rem:aliasing_refined}
Due to the fact that two columns $\bolda_{\boldh_1}$ and $\bolda_{\boldh_2}$, $\boldh_1,\boldh_2\in \Mirror(I)$, $I\subset\N_0^d$, of the matrix $\FourMat(\Lambda(\boldz,M),\Mirror(I))$ are either orthogonal or identical and all columns $\bolda_{\boldh}$ are nonzero, we observe $\boldc_{\boldk}'\cdot\bolda_{\boldh}\neq 0$ for each $\boldk\in I$ and any $\boldh\in\Mirror(\{\boldk\})$. Here, $\boldc_{\boldk}'$, $\boldk\in I$, are the columns of the matrix $\FourMat(\ST,\Mirror(I))\ConMat(I)$, cf.\ also the proof of Lemma~\ref{lem:C_linear independent_columns}.
Taking Remark~\ref{rem:weakened_assumption} and Lemma~\ref{lem:C_linear independent_columns} into account, $\bolda_{\boldh}\not\in\sspan\{\,\bolda_{\boldl}\colon \boldl\in\Mirror(I\setminus\{|\boldh|\})\,\}$ immediately implies $\boldc_{|\boldh|}\not\in\sspan\{\boldc_{\boldl}\colon \boldl\in I\setminus\{|\boldh|\}\}$ if $\ST$ is a rank\mbox{-}1 lattice. Here, $\boldc_{\boldk}$, $\boldk\in I$, are the columns of the Chebyshev evaluation matrix $\ChebMat(\CosSet(\ST),I)$.
\end{remark}

Usually, distinct frequencies $\boldh_1,\boldh_2\in J\subset\Z^d$, $\boldh_1\neq\boldh_2$ and $|J|<\infty$, with coinciding matrix columns $\bolda_{\boldh_1}$ and $\bolda_{\boldh_2}$ are called aliasing frequencies. 
An undesirable reason for aliasing are inappropriate lattice sizes $M$.
We observe the implication
$$
\boldh_1\equiv\boldh_2\imod{M}\quad\Rightarrow\quad \boldh_1\cdot\boldz\equiv\boldh_2\cdot\boldz\imod{M}
$$
for each $\boldz\in\Z^d$. Here, the modulo at the left hand side is applied component-wise.
Accordingly, for a fixed lattice size $M$ we could have aliasing for each generating vector $\boldz\in\Z^d$.
As a consequence, for theory we avoid lattice sizes $M$ for which there exist $\boldh_1,\boldh_2\in J$, $\boldh_1\neq\boldh_2$, such that
$\boldh_1\equiv\boldh_2\imod{M}$ holds, since the sampling values at all lattices of such sizes $M$ do not allow for distinguishing the two monomials $\e^{2\pi\ii\boldh_1\cdot}$
and $\e^{2\pi\ii\boldh_2\cdot}$.
For that reason, we introduce the set
\begin{equation}
J_{\bmod M}:=\{\boldh\bmod M\colon \boldh\in J\}\subset[0,M-1]^d\cap\Z^d\,,
\label{eq:set_mod_M}
\end{equation}
and observe that $|J|=|J_{\bmod M}|<\infty$ implies that there do not exist $\boldh_1,\boldh_2\in J$, $\boldh_1\neq\boldh_2$, with
$\boldh_1\equiv\boldh_2\imod{M}$. In addition, we introduce the number $N_J$ of the occurring maximal powers of the monomials within $\Pi(J)$ or $\ChebPol(\abs(J))$ by
\begin{equation}
N_J:=\max_{\boldh\in J} \|\boldh\|_\infty.
\label{eq:max_occur_powers}
\end{equation}
In fact, for $M\in\N$ and $M>2N_J$ one observes $|J|=|J_{\bmod M}|$.
Moreover, we observe $N_I=N_{\Mirror(I)}$ and thus $|\Mirror(I)|=|\Mirror(I)_{\bmod M}|$ for $I\subset\N_0^d$ and $M\in\N$ with $M>2N_I$.

\section{Joining suitable sampling sets for discretization}\label{sec:JoiningSamplingSets}

According to Lemma~\ref{lem:span_subset}, one approach for constructing spatial discretizations of $\ChebPol(I)$ is the successive determination of
sets $\SX_i\subset[-1,1]^d$, $|\SX_i|<\infty$, $i=1,2,\dots$, of sampling nodes. For each sampling set $\SX_i$ one determines all the columns, i.e., indices $\boldk\in I$, of $\ChebMat(\SX_i,I)$ which are not in the span of the other columns. Once for each of the indices $\boldk\in I$ the corresponding column is not in the span of all others for at least one $\ChebMat(\SX_i,I)$, the matrix $\ChebMat(\bigcup_i \SX_i,I)$ is of full column rank.
Obviously, the sampling sets $\SX_i$ must be selected appropriately.

\begin{remark}
The aforementioned strategy provides a general approach for discretizing finite dimensional function spaces. Assuming a linear independent set
of basis functions is given, one can build up the discretization step by step. Clearly, the crucial challenge
is that the chosen sampling sets must be selected such that there is at least a chance that each column might be not in the span of all others.
\end{remark}

An alternative approach is to determine a sampling set $\ST\subset\T^d$ which fulfills Corollary~\ref{cor:matrix_C_full_crank} in a similar way. One chooses $\ST_i\subset\T^d$, $|\ST_i|<\infty$, $i=1,2,\dots$, determines all columns, i.e., $\boldh\in\Mirror(I)$, of $\FourMat(\ST_i,\Mirror(I))$ which are not in the span of all other columns of $\FourMat(\ST_i,\Mirror(I))$. The corresponding indices $\boldh$ are collected in a set $\tilde{J}$. Once $I\subset \abs(\tilde{J})$ is fulfilled, i.e., $I=\abs(\tilde{J})$, the matrix 
$\ChebMat(\SX,I)\triangleq\FourMat(\bigcup_i\ST_i,\Mirror(I))\ConMat(I)$, $\SX=\CosSet(\bigcup_{i}\ST_i):=\bigcup_{i}\{\cos(2\pi \boldt)\colon \boldt\in\ST_i\}$, has full column rank due to Corollary~\ref{cor:matrix_C_full_crank} and Lemma~\ref{lem:span_subset}. Again, the chosen sampling sets $\ST_i$ must be suitably selected. Well-chosen rank\mbox{-}1 lattices have such good properties as the theoretical considerations in Section~\ref{sec:theory} verify.

\subsection*{Additional improvement}\label{sec:add_impro_general}

An additional refinement of the aforementioned strategy promises much lower
cardinalities of the constructed spatial discretizations.
Once the first well-chosen sampling set $\SX_1$ is fixed, it is clear that all columns of the sampling matrix $\ChebMat(\SX_1,I)$ which fulfill the \NISOR condition will fulfill the \NISOR condition w.r.t.\ $\ChebMat(\SX,I)$ for each sampling set $\SX\supset\SX_1$ as well, cf.\ 
Lemma~\ref{lem:span_subset}. In other words, using an approach that 
builds up a discretization by means of unions of sampling sets as described above, one needs to identify each column as \NISOR only once. Accordingly, it is possible to reduce the size of the discretization problem after each step.

\subsubsection*{Immediate improvement}

According to Lemma~\ref{lem:matrices}, we can further improve the discretization approach in the following way.
For a given sampling set $\mathcal{X}_1$ we categorize the columns of $\ChebMat(\mathcal{X}_1,I)$ in those which fulfill the \NISOR condition and collect them in $\AMat_{11}$ and the others in $\AMat_{12}$. We denote the set of indices $\boldk\in I$ which belong to $\AMat_{11}$  by $J_1$.
Subsequently, we select a second sampling set $\mathcal{X}_2$ and consider the matrix $\ChebMat(\mathcal{X}_2,I\setminus J_1)$ in the role of $\AMat_{22}$. We determine the columns of $\ChebMat(\mathcal{X}_2,I\setminus J_1)$ which fulfill the \NISOR condition w.r.t.\ $\ChebMat(\mathcal{X}_2,I\setminus J_1)$ and denote the corresponding indices by $J_2$. According to Lemma~\ref{lem:matrices}, each of the first $|J_1|+|J_2|$ columns of the matrix 
\begin{equation}
\begin{pmatrix}
\ChebMat(\mathcal{X}_1,J_1) & \ChebMat(\mathcal{X}_1,J_2) & \ChebMat(\mathcal{X}_1,I\setminus (J_1\cup J_2))\\
\ChebMat(\mathcal{X}_2,J_1) & \ChebMat(\mathcal{X}_2,J_2) & \ChebMat(\mathcal{X}_2,I\setminus (J_1\cup J_2))
\end{pmatrix}\label{eq:chebmatrix:cherry_pick}
\end{equation}
fulfill the \NISOR condition w.r.t.\ the matrix in \eqref{eq:chebmatrix:cherry_pick}. Note that\eqref{eq:chebmatrix:cherry_pick} is essentially the same as $\ChebMat(\mathcal{X}_1\cup \mathcal{X}_2, I)$ up to column permutations. Consequently,
$$\begin{pmatrix}
\ChebMat(\mathcal{X}_1,J_1) & \ChebMat(\mathcal{X}_1,J_2)\\
\ChebMat(\mathcal{X}_2,J_1) & \ChebMat(\mathcal{X}_2,J_2)
\end{pmatrix}
$$
can
be used as $\AMat_{11}$ in the next step.
Now, one continues in selecting suitable sampling sets $\mathcal{X}_\ell$ up to $\ell=K$ for which $J_1\cup\dots\cup J_K=I$ is fulfilled.

\subsubsection*{Improvement via Fourier}

The strategy explained in the last paragraph can also be applied to the matrices $\FourMat(\ST,\Mirror(I))$ with some slight modifications.
For a given sampling set $\ST_1\in\T^d$ we categorize the columns of
$\FourMat(\ST_1,\Mirror(I))$ in those which fulfill the \NISOR condition w.r.t.\ $\FourMat(\ST_1,\Mirror(I))$ and the others. We denote the set of indices $\boldh\in \Mirror(I)$ which belong to the columns that fulfill the \NISOR condition by $\tilde{J}_1$. Applying Lemma~\ref{lem:C_linear independent_columns}, each column of the matrix $\ChebMat(\SX_1,J_1)$, $\mathcal{X}_1=\CosSet(\ST_1)$ and $J_1=\abs(\tilde{J}_1)$, fulfills the \NISOR condition w.r.t.\ $\ChebMat(\SX_1,I)$. Accordingly, we just have to consider the index set $I\setminus J_1$, i.e., it remains to determine a sampling set $\tilde{\SX}$ such that the matrix 
$\ChebMat(\tilde{\SX},I\setminus J_1)$ is of full column rank. To this end, one can determine $\tilde{\ST}\subset\T^d$ such that $\FourMat(\tilde{\ST},\Mirror(I\setminus J_1))$ fulfills the assumptions of Corollary~\ref{cor:matrix_C_full_crank}. Then the set $\CosSet(\tilde{\ST}\cup\ST_1)$ is a spatial discretization of $\ChebPol(I)$.

Of course, the aforementioned strategy can be applied iteratively as long as there are columns that have not yet been identified as %
\NISOR{}\!.
It should be noted that at a certain stage of such an iterative approach, there may be columns in the matrix $\ChebMat(\CosSet(\SX_1\cup\dots\cup\SX_l),I)$, $l\ge 2$, that have not yet been identified fulfilling the \NISOR condition, but which, due to the combination of the sampling sets, already satisfy this condition.

\section{Theory on (cosine transformed) multiple rank\mbox{-}1 lattices}\label{sec:theory}

The goal of this section is to combine Corollary~\ref{cor:matrix_C_full_crank} with known results on rank\mbox{-}1 lattices to obtain theoretical estimates on the number of sampling nodes that a discretization constructed using the proposed approaches contains and to estimate the associated failure probability. We refer to \cite[Section~3]{Kae17} from which we adapt the proofs in order to obtain specific results for our present purposes.
In contrast to the general formulation in Corollary~\ref{cor:matrix_C_full_crank}, we restrict the considerations
to a special case. More precisely, for a given index set $I\subset\N_0^d$, we construct a sampling set $\mathcal{T}\subset\T^d$ such that for each $\boldk\in I$ the column $\bolda_{\boldk}$ of $\FourMat(\ST,\Mirror(I))$ fulfills the \NISOR condition, i.e., $\bolda_{\boldk}\not\in\sspan\{\bolda_{\boldl}\colon \boldl\in\Mirror(I)\setminus\{\boldk\}\}$. Obviously, this is even more restrictive than the assumptions in Corollary~\ref{cor:matrix_C_full_crank}.
In fact, avoiding (too much) aliasing frequencies is the main idea of the following construction approach.

\begin{lemma}\label{lem:count_aliasing_probability_cheb}
We fix a frequency $\zb k\in I\subset\N_0^d$, $|I|<\infty$, and a prime number $M$ such that $|\Mirror(I)_{\bmod M}|=|\Mirror(I)|$, cf. \eqref{eq:set_mod_M}. In addition, we choose a generating vector $\boldz\in[0,M-1]^d$ at random. Then, with probability of at most
$\frac{|\Mirror(I)|-1}{M}$ the frequency $\zb k$ aliases to at least one other frequency $\zb l\in \Mirror(I)\setminus\{\zb k\}$.
\end{lemma}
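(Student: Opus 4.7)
The plan is to reduce the aliasing statement to a counting problem on the generating vector via Lemma~\ref{lem:basic_aliasing}, then apply a union bound over all potential aliasing partners of $\boldk$. Concretely, two columns $\bolda_{\boldk}$ and $\bolda_{\boldl}$ of $\FourMat(\Lambda(\boldz,M),\Mirror(I))$ coincide if and only if $(\boldk-\boldl)\cdot\boldz\equiv 0\imod{M}$, so I would rewrite the event of interest as
\[
\bigcup_{\boldl\in\Mirror(I)\setminus\{\boldk\}}\Bigl\{\boldz\in[0,M-1]^d\colon (\boldk-\boldl)\cdot\boldz\equiv 0\imod{M}\Bigr\}
\]
and bound the probability of this union.

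The key step is to control the probability of a single such event for a fixed $\boldl\in\Mirror(I)\setminus\{\boldk\}$. Here the assumption $|\Mirror(I)_{\bmod M}|=|\Mirror(I)|$ enters: it guarantees that $\boldk\not\equiv\boldl\imod{M}$, so at least one component of the difference $\boldk-\boldl$ is not $\equiv 0\imod{M}$. Because $M$ is prime, $\Z/M\Z$ is a field, so this nonzero component is invertible modulo $M$. Fixing the remaining $d-1$ components of the uniformly drawn $\boldz\in[0,M-1]^d$ arbitrarily, the linear congruence $(\boldk-\boldl)\cdot\boldz\equiv 0\imod{M}$ has exactly one solution for the distinguished component. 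Counting gives $M^{d-1}$ favorable generating vectors out of $M^d$, so the probability of aliasing to this specific $\boldl$ equals $1/M$.

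Finally, I would apply a union bound over the $|\Mirror(I)|-1$ possible choices of $\boldl\in\Mirror(I)\setminus\{\boldk\}$, yielding the claimed upper bound $\tfrac{|\Mirror(I)|-1}{M}$. I do not anticipate a genuine obstacle here; the only subtlety is to be explicit that primality of $M$ is what makes the "one distinguished invertible component" argument work, and that the hypothesis $|\Mirror(I)_{\bmod M}|=|\Mirror(I)|$ is precisely what ensures $\boldk-\boldl$ is not the zero vector modulo $M$ for any $\boldl\in\Mirror(I)\setminus\{\boldk\}$ (otherwise the event has probability one and the bound would be vacuous).
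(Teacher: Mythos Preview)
Your argument is correct and is precisely the standard proof underlying the result the paper invokes: the paper's own proof simply observes that $\boldk\in I\subset\Mirror(I)$ and then cites \cite[Lemma~3.1]{Kae17} applied to $\boldk$ as an element of $\Mirror(I)\subset\Z^d$. You have written out explicitly what that cited lemma establishes (union bound over $\boldl\in\Mirror(I)\setminus\{\boldk\}$, with the single-event probability $1/M$ coming from the invertibility of a nonzero residue modulo the prime $M$), so your approach and the paper's coincide.
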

\begin{proof}
Taking $I\subset\Mirror(I)$ and, consequently, $\boldk\in\Mirror(I)$ into account, we apply \cite[Lemma 3.1]{Kae17} to $\boldk$ as element of $\Mirror(I)\subset\Z^d$.
\end{proof}

\begin{theorem}\label{thm:gen_mr1l_cheb}
We consider the given frequency set $I\subset\N_0^d$, $|I|<\infty$, and the corresponding mirrored frequency set $\Mirror(I)$ and we fix an element $\zb k\in I$.
In addition,  we fix $c>1$ and $\delta\in(0,1]$ and we determine two numbers
\begin{align}
\lambda&\ge c(|\Mirror(I)|-1),\label{eq:def_lambda1}\\
L&\ge\ceil{\left(\frac{c}{c-1}\right)^2\frac{\ln |I|-\ln \delta}{2}}\label{def:s}
\end{align}
and a prime number $M\ge \lambda$ such that $|\Mirror(I)_{\bmod M}|=|\Mirror(I)|$ holds.

Moreover, we draw $L$ generating vectors $\boldz_\ell\in [0,M-1]^d$, $\ell=1,\dots,L$, i.i.d.\ uniformly at random.
Then, the probability that the frequency $\zb k\in I$ aliases to any other frequency within $\Mirror(I)$ for each rank\mbox{-}1 lattice $\Lambda(\boldz_\ell,M_\ell)$ is bounded from above by $\e^{-2L\left(\frac{c-1}{c}\right)^2}\le \frac{\delta}{|I|}$.
\end{theorem}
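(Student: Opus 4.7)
The plan is to reduce this to Hoeffding's inequality applied to independent indicator variables, one per drawn rank\mbox{-}1 lattice. The probability we want to bound is the intersection probability of $L$ independent events, namely that $\boldk$ collides (modulo $M$) with some other element of $\Mirror(I)\setminus\{\boldk\}$ under the inner product with each $\boldz_\ell$. Since the $\boldz_\ell$ are i.i.d., these events are independent, which is the key structural fact that makes a concentration argument directly applicable.

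The first step is to bound the single-lattice aliasing probability. For each $\ell\in\{1,\dots,L\}$, let $X_\ell$ be the indicator of the event that $\boldk$ aliases to at least one $\boldl\in\Mirror(I)\setminus\{\boldk\}$ in $\Lambda(\boldz_\ell,M)$. Lemma~\ref{lem:count_aliasing_probability_cheb} directly gives
\[
\Pr(X_\ell=1) \le \frac{|\Mirror(I)|-1}{M}.
\]
Since $M\ge \lambda \ge c(|\Mirror(I)|-1)$ by \eqref{eq:def_lambda1}, we obtain $\Pr(X_\ell=1)\le 1/c$ for every $\ell$. Also, $|\Mirror(I)_{\bmod M}|=|\Mirror(I)|$ is explicitly assumed, so Lemma~\ref{lem:count_aliasing_probability_cheb} applies cleanly.

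The second step combines the single-lattice bounds. The event whose probability we want is $\{X_1=X_2=\dots=X_L=1\}$, equivalently $\{\sum_{\ell=1}^L X_\ell \ge L\}$ or $\{\bar X \ge 1\}$ where $\bar X = L^{-1}\sum_{\ell=1}^L X_\ell$. Writing $\mu_\ell := \Pr(X_\ell = 1) \le 1/c$ and $\mu := L^{-1}\sum \mu_\ell \le 1/c$, we have $1 - \mu \ge 1 - 1/c = (c-1)/c > 0$, so Hoeffding's inequality for independent Bernoulli variables yields
\[
\Pr(\bar X \ge 1) = \Pr\bigl(\bar X - \mu \ge 1-\mu\bigr) \le \exp\!\bigl(-2L(1-\mu)^2\bigr) \le \exp\!\Bigl(-2L\bigl(\tfrac{c-1}{c}\bigr)^2\Bigr).
\]

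The final step is a short calculation verifying that this upper bound is $\le \delta/|I|$. Taking logarithms, we need $2L((c-1)/c)^2 \ge \ln|I|-\ln\delta$, which rearranges to $L \ge (c/(c-1))^2(\ln|I|-\ln\delta)/2$. This is precisely the assumption \eqref{def:s} on $L$, which closes the argument. The only mildly delicate point is keeping track of the direction of the inequality when passing from the possibly strict upper bound $\mu_\ell \le 1/c$ to the deviation term $1 - \mu$ in Hoeffding's exponent; since the exponent is monotone in $\mu$ on $[0,1)$, replacing $\mu$ by its upper bound $1/c$ only weakens the estimate, so the displayed bound remains valid. No other step poses any genuine obstacle.
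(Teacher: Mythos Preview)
Your proof is correct and follows essentially the same approach as the paper: both define indicator variables for the per-lattice aliasing event, bound their mean by $1/c$ via Lemma~\ref{lem:count_aliasing_probability_cheb} and the assumption $M\ge\lambda$, and then apply Hoeffding's inequality to the sum. The only cosmetic difference is that the paper introduces an explicit slack $\varepsilon = 1/c - \mu$ to land directly on the deviation $(c-1)/c$, whereas you first get the sharper exponent $(1-\mu)^2$ and then weaken it by monotonicity; the two arguments are equivalent.
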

\begin{proof}
For the fixed frequency $\zb k\in I$, we define the random variables
$$
Y_\ell^{\zb k}:=\begin{cases}
0&\colon \zb k \textnormal{ does not alias to another frequency within $\Mirror(I)$ using $\Lambda(\boldz_\ell,M)$},\\
1&\colon \zb k \textnormal{ aliases to at least one other frequency within $\Mirror(I)$ using $\Lambda(\boldz_\ell,M)$}.\\
\end{cases}
$$
The random variables $Y_\ell^{\zb k}$, $\ell=1,\ldots, L$, are independent and identically distributed with a specific mean $\mu\le\frac{|\Mirror(I)|-1}{M}\le\frac{|\Mirror(I)|-1}{\lambda}\le\frac{1}{c}$, due to Lemma \ref{lem:count_aliasing_probability_cheb}.
Hoeffding's inequality \cite{Hoeff63} allows for the estimate
\begin{align*}
\mathcal{P}\left\{\sum_{\ell=1}^LY_\ell^{\zb k}=L\right\} &=\mathcal{P}\left\{L^{-1}\sum_{\ell=1}^LY_\ell^{\zb k}-\mu=1-\mu\right\}\le\mathcal{P}  \left\{L^{-1}\sum_{\ell=1}^LY_\ell^{\zb k}-\mu\ge 1-\varepsilon-\mu\right\}\\
&\le \e^{-2L\left(1-\varepsilon-\mu\right)^2}= \e^{-2L\left(\frac{c-1}{c}\right)^2}
\le\e^{\ln \delta-\ln |I|}=\frac{\delta}{|I|}
\end{align*}
for the specific choice $\varepsilon=\frac{1}{c}-\mu\ge0$.
\end{proof}

Let us consider the matrix $\FourMat\left(\ST,\Mirror(I)\right)$, where $\ST=\bigcup_{\ell=1}^L\Lambda(\boldz_\ell,M)$ is constructed as Theorem~\ref{thm:gen_mr1l_cheb} suggests. Taking Lemmas~\ref{lem:basic_aliasing} and~\ref{lem:matrices}
into account, the probability that the $\boldk$th column $\bolda_{\boldk}$, $\boldk\in I$, of $\FourMat\left(\ST,\Mirror(I)\right)$ fulfills the \NISOR condition is at least $1-\delta/|I|$.
Accordingly, the probability that the $\boldk$th column $\boldc_{\boldk}$ of $\ChebMat\left(\CosSet(\ST),I\right)$ fulfills the \NISOR condition is at least $1-\delta/|I|$ due to Lemma~\ref{lem:C_linear independent_columns}.

Another simple conclusion allows for the estimate on the probability
that each column of $\ChebMat\left(\CosSet(\ST),I\right)$ fulfills the \NISOR condition, i.e., the matrix $\ChebMat\left(\CosSet(\ST),I\right)$ is of full column rank and, thus, $\CosSet(\ST)$ is a spatial discretization of $\ChebPol(I)$.

\begin{theorem}\label{thm:prob_bound_T_cheb}
Choosing $M$, $L$, and $\Lambda(\boldz_\ell,M)$, $\ell=1,\ldots,L$, as stated in Theorem \ref{thm:gen_mr1l_cheb}, the probability that we can uniquely reconstruct all Chebyshev coefficients from the sampling values of $P\in\operatorname{C\Pi}_I$ at the cosine transformed multiple rank\mbox{-}1 lattice $\CosSet\left(\bigcup_{\ell=1}^L \Lambda(\boldz_\ell,M)\right)=\bigcup_{\ell=1}^L\{\cos(2\pi \boldt)\colon \boldt\in \Lambda(\boldz_\ell,M)\}$ is bounded from below by $1-\delta$.
\end{theorem}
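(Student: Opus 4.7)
The plan is to reduce the full-column-rank condition on $\ChebMat(\CosSet(\ST), I)$, where $\ST := \bigcup_{\ell=1}^L \Lambda(\boldz_\ell, M)$, to pointwise \NISOR conditions on the columns of $\FourMat(\ST, \Mirror(I))$, then invoke the per-frequency aliasing bound of Theorem~\ref{thm:gen_mr1l_cheb} and close with a union bound over $\boldk \in I$.

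First, I would fix an arbitrary $\boldk \in I$ and note that $\boldk \in \Mirror(I)$ (taking the all-ones sign pattern in the definition \eqref{eq:mirror_operator}), so $\bolda_\boldk$ is a bona fide column of $\FourMat(\ST, \Mirror(I))$. Theorem~\ref{thm:gen_mr1l_cheb} bounds by $\delta/|I|$ the probability that $\boldk$ aliases to some other element of $\Mirror(I)$ on \emph{every} one of the $L$ lattices. On the complementary event there exists an index $\ell^\star$ such that $\boldk$ does not alias to any other $\boldl \in \Mirror(I)$ under $\Lambda(\boldz_{\ell^\star}, M)$. Lemma~\ref{lem:basic_aliasing} then says the column $\bolda_\boldk$ of the submatrix $\FourMat(\Lambda(\boldz_{\ell^\star}, M), \Mirror(I))$ is orthogonal to every other column of that submatrix and is non-zero, hence not in their span. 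An application of Lemma~\ref{lem:span_subset}, with $K$ the subset of rows corresponding to $\Lambda(\boldz_{\ell^\star}, M)$, lifts this \NISOR property to the full matrix $\FourMat(\ST, \Mirror(I))$.

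Next, I would take a union bound over $\boldk \in I$: with probability at least $1 - |I|\cdot\delta/|I| = 1 - \delta$, every $\boldk \in I$ has an associated column $\bolda_\boldk$ of $\FourMat(\ST, \Mirror(I))$ that fulfills the \NISOR condition. Since $\boldk \in \Mirror(\{\boldk\})$, this is precisely the hypothesis of Corollary~\ref{cor:matrix_C_full_crank}, which therefore yields that $\ChebMat(\CosSet(\ST), I)$ has full column rank $|I|$. Consequently, on this event the Chebyshev coefficients of any $P \in \ChebPol(I)$ are uniquely determined by the sampling values of $P$ at $\CosSet(\ST) = \bigcup_{\ell=1}^L \{\cos(2\pi\boldt) : \boldt \in \Lambda(\boldz_\ell, M)\}$, which is the claimed statement.

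Because all ingredients have been assembled in the preceding sections, I do not anticipate a genuine obstacle; the entire argument is a short chain of implications together with a union bound. The only point that deserves care is the bookkeeping around what exactly is bounded by $\delta/|I|$ in Theorem~\ref{thm:gen_mr1l_cheb}: it is the per-frequency event that $\boldk$ gets aliased on \emph{all} $L$ lattices simultaneously, so the union bound runs over the $|I|$ frequencies in $I$ (not over the potentially much larger set $\Mirror(I)$), giving the advertised overall failure probability $\delta$ rather than something like $|\Mirror(I)|\,\delta/|I|$.
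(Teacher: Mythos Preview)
Your proposal is correct and follows essentially the same route as the paper: per-frequency aliasing bound from Theorem~\ref{thm:gen_mr1l_cheb}, lift the \NISOR property from a single lattice block to the stacked Fourier matrix, take the union bound over $\boldk\in I$, and conclude via Corollary~\ref{cor:matrix_C_full_crank}. The only cosmetic difference is that the paper, in the paragraph preceding the theorem, cites Lemma~\ref{lem:matrices} for the lifting step whereas you invoke Lemma~\ref{lem:span_subset}; your choice is arguably more direct here and both yield the same conclusion.
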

\begin{proof}
Applying the union bound on the probability determined in Theorem~\ref{thm:gen_mr1l_cheb} yields an upper
bound on the failure probability
$$
1-\mathcal{P}\left(\bigcap_{\zb k\in I}\left\{\sum_{\ell=1}^LY_\ell^{\zb k}<L\right\}\right)=\mathcal{P}\left(\bigcup_{\zb k\in I}\left\{\sum_{\ell=1}^LY_\ell^{\zb k}=L\right\}\right)\le
\sum_{\zb k\in I}\mathcal{P}\left(\sum_{\ell=1}^LY_\ell^{\zb k}=L\right)\le  |I|\frac{\delta}{|I|}\,.
$$
Further, we apply Corollary~\ref{cor:matrix_C_full_crank} which yields the assertion.
\end{proof}

The difference to the results in \cite[Section~3]{Kae17} lies in the columns that must have the \NISOR condition. For a spatial discretization of $\Pi(\Mirror(I))$, all columns of the matrix $\FourMat(\ST,\Mirror(I))$ must satisfy the \NISOR condition. According to \cite[Section~3]{Kae17}, this is guaranteed with probability of at least $\delta$ if the sampling set $\ST$ consists of at least $\ceil{\left(\frac{c}{c-1}\right)^2\frac{\ln |\Mirror(I)|-\ln \delta}{2}}$ randomly drawn rank\mbox{-}1 lattices of size $M$ as above.

In contrast, a set of cosine transformed rank\mbox{-}1 lattices is a spatial discretization of $\ChebPol(I)$ if the \NISOR condition is satisfied for a subset of columns of $\FourMat(\ST,\Mirror(I))$. This is already guaranteed with probability at least $\delta$ for a lower number, namely $\ceil{\left(\frac{c}{c-1}\right)^2\frac{\ln |I|-\ln \delta}{2}}$, of randomly drawn rank\mbox{-}1 lattices of the same size $M$.

\begin{remark}\label{rem:remark_cards_cheb_lattices}
The number of sampling nodes within a single cosine transformed rank\mbox{-}1 lattice, i.e., within the set $\CosSet(\Lambda(\boldz,M))=\{\cos(2\pi \boldt)\colon \boldt\in \Lambda(\boldz,M)\}$, is bounded from above by $\frac{M+1}{2}$, due to the fact that
$$
\cos\left(2\pi \frac{j}{M}\boldz\right)=\cos\left(-2\pi \frac{j}{M}\boldz\right)=
\cos\left(2\pi\frac{M}{M}\boldz -2\pi \frac{j}{M}\boldz\right)=\cos\left(2\pi \frac{M-j}{M}\boldz\right)
$$
holds for each $j\in\N_0$, cf.\ also \cite{SuNuCo14,KuoMiNoNu19}. Since $\boldzero\in\T^d$ is an element of each rank\mbox{-}1 lattice, each cosine transformed rank\mbox{-}1 lattice contains the sampling node $(1,\dots,1)^\top\in[-1,1]^d$.

Moreover, for fixed $c=2$ and given $I\subset\N_0^d$, $1\le|I|<\infty$ with $|\Mirror(I)|>1$, we determine $$M:=\operatorname{nextprime}(\max(2(\Mirror(I)-1),2N_I)),$$ which implies $|\Mirror(I)_{\bmod M}|=|\Mirror(I)|$, cf.~\eqref{eq:max_occur_powers} and the following paragraph. Consequently, $M$ fulfills the requirements of Theorem~\ref{thm:prob_bound_T_cheb} with $c=2$. Then, $M$ is bounded from above by
$$
M
\le 2 (2\max(|\Mirror(I)|-1,N_I))-1
$$
due to Bertrand's postulate. As a result, we observe that
$
|\CosSet(\Lambda(\boldz,M))|
\le 2\max(\Mirror(I)-1,N_I)$ holds.
In addition, for $r\ge 0$, we fix $\delta=|I|^{-r}$ and $L=\lceil 2(r+1)\ln|I|\rceil$.
For the aforementioned fixed parameter choice, the sampling sets constructed by Theorem~\ref{thm:prob_bound_T_cheb} have cardinalities bounded
by $2\,\lceil 2(r+1)\ln|I|\rceil \max(|\Mirror(I)|-1,N_I)$ and the matrix $\ChebMat(\SX,I)$, $\SX=\bigcup_{\ell=1}^L\CosSet(\Lambda(\boldz_\ell,M))$, is of full column rank with probability at least $1-|I|^{-r}$.
\end{remark}

\section{Improvements for practical implementation}\label{sec:improvements}

The analysis in Section~\ref{sec:theory} determines requirements for a sampling set $\mathcal{T}$ consisting of multiple rank\mbox{-}1 lattices such that each column of the matrix $\FourMat(\ST,\Mirror(I))$ which belongs to some $\boldk\in I\subset\Mirror(I)$ has the \NISOR condition with a certain probability. A subsequent application of Corollary~\ref{cor:matrix_C_full_crank}
 leads to the statement about the Chebyshev matrices. However, the requirements of Corollary~\ref{cor:matrix_C_full_crank} to the matrix $\FourMat(\ST,\Mirror(I))$ are slightly weaker, which leads to the insight, that $\FourMat(\ST,\Mirror(I))$ fulfills these requirements when constructing the sampling set $\ST$ in the way Theorem~\ref{thm:prob_bound_T_cheb} suggests with even higher probability. From another point of view, the discretization property might even be fulfilled for, e.g., smaller lattice sizes $M$ or lower numbers $L$ of lattices with the estimated probability.

Section~\ref{sec:constr_wo_index_set} shortly explains, how to determine spatial discretizations without dealing (too much) with index sets.
Moreover, Section~\ref{sec:constr_w_index_set} discusses several approaches for determining spatial discretizations of $\ChebPol(I)$.
Several improvements aim to significantly reduce the number of sampling nodes.

\subsection{Construction without determining index sets}\label{sec:constr_wo_index_set}

The suggested construction of multiple rank\mbox{-}1 lattices in Theorem~\ref{thm:prob_bound_T_cheb}
just uses the cardinality of the index set $I$ and the cardinality of the mirrored index set $\Mirror(I)$. If the maximal extent $N_{\Mirror(I)}$ of $\Mirror(I)$ is additionally bounded by its cardinality $|\Mirror(I)|$ (in practical applications the rule rather than the exception) then the discretization can be constructed without knowing the index sets itself. It is enough to have some close upper bounds on $|I|$ and $|\Mirror(I)|$.

In any case, the construction from the known index set $I$ without determining the mirrored index set $\Mirror(I)$ is a simple task.

\begin{example}[Construction without $\Mirror(I)$]
We assume that $I\subset\N_0^d$ is given and we fix $c=2$ and $r\ge 0$. We determine
the cardinality of $\Mirror(I)$ merely.
\begin{align*}
|\Mirror(I)|&=\sum_{\boldk\in I}2^{\|\boldk\|_0}\\
M&=\operatorname{nextprime}(\max(2(|\Mirror(I)|-1),2N_I)),\quad\text{cf.~\eqref{eq:max_occur_powers}}\\
L&=\lceil 2(1+r)\ln|I|\rceil
\end{align*}
Subsequently, we draw $L$ i.i.d.\ vectors uniformly at random from $[0,M-1]^d$ and denote them by $\boldz_\ell$, $\ell=1,\dots,L$.
Then the sampling set $\bigcup_{i=1}^L\CosSet\left(\Lambda(\boldz_\ell,M)\right)
$ is a spatial discretization of $\ChebPol(I)$ with probability at least $1-|I|^{-r}$. Assuming that $N_I\lesssim |\Mirror(I)|$, the number of distinct sampling nodes can be estimated by
$$
\bigcup_{i=1}^L\CosSet\left(\Lambda(\boldz_\ell,M)\right)
\le 1+L\frac{M-1} {2}
\in \OO{|\Mirror(I)|\ln|I|}\subset \OO{2^d|I|\ln|I|}\,.
$$
The computational costs of that approach are very manageable in $\OO{d|I|}$.
\end{example}

Indeed, the construction approaches mentioned in this section do not guarantee
success, since there is still a certain failure probability. However, knowing
the index sets allows for simply checking the discretization properties of the constructed sampling set
or checking conditions that imply the discretization property.

\subsection{Construction with determined index sets}\label{sec:constr_w_index_set}

Theorem~\ref{thm:prob_bound_T_cheb} allows for the estimate that with probability at least
$1-|I|\e^{-2L\left(\frac{c-1}{c}\right)^2}$, $c>1$, a number of $L$ randomly drawn rank\mbox{-}1 lattices of the determined (prime) size $M\ge\lambda$ as in \eqref{eq:def_lambda1}, constitutes a spatial discretization of 
$\ChebPol(I)$.
Moreover, once the $L$ rank\mbox{-}1 lattices are drawn, one can easily check whether or not each index $\boldk\in I$ has some mirror in $\boldh\in\Mirror(\{\boldk\})$ whose corresponding column in at least one $\FourMat(\Lambda(\boldz_\ell,M),\Mirror(I))$, $\ell=1,\dots,L$, fulfills the \NISOR condition. 
If each $\boldk\in I$ has such a column in at least one $\FourMat(\Lambda(\boldz_\ell,M),\Mirror(I))$, then the set $\CosSet\left(\bigcup_{\ell=1}^L\Lambda(\boldz_\ell,M)\right)$
is a spatial discretization of $\ChebPol(I)$.
In fact, we compute the values $\boldh\cdot\boldz_\ell\bmod{M}$ for all $\boldh\in\Mirror(I)$ and all $\ell\in\{1,\dots,L\}$ and we determine the sets
\begin{equation}
J_\ell:=\{|\boldk|\colon \boldk\in \Mirror(I),\; \not\exists \boldh\in \Mirror(I)\setminus\{\boldk\}\text{ with }\boldk\cdot \boldz_{\ell}\equiv\boldh\cdot \boldz_{\ell}\imod{M}\}\label{eq:def_Jl}
\end{equation}
 due to Lemma~\ref{lem:basic_aliasing}.
If $\bigcup_{\ell=1}^L J_\ell=I$ holds, $\CosSet\left(\bigcup_{\ell=1}^L\Lambda(\boldz_\ell,M)\right)$ is guaranteed to be a spatial discretization of $\ChebPol(I)$.

Please note that it is not necessary to construct the mirrored index set. In fact, holding an integer vector of size $|\Mirror(I)|$ is enough in order to save all inner products $\boldh\cdot\boldz_\ell\bmod{M}$, $\boldh\in\Mirror(I)$, for fixed $\ell$. All the inner products can be computed from $I$ directly. Moreover, the sets $J_\ell$ need not to be physically stored since it is enough to store a pointer to the corresponding elements in $I$. Nevertheless, the computational complexity is in $\OO{|\Mirror(I)|(\log|I|+d)L}$, cf. Algorithm~\ref{alg:determine_J_ell}, which already includes the random drawing of the generating vectors.
Further details on the implementation for the trigonometric system can be found in \cite{Kae17}.

\begin{algorithm}[tb]
\caption{Determining the sets $J_\ell$}
\label{alg:determine_J_ell}
  \begin{tabular}{p{1.4cm}p{2.05cm}p{10.8cm}}
    Input: 	& $I\subset\N_0^d$ 	& frequency set\\
    		& $L$	& number of rank\mbox{-}1 lattices to be considered\\
    		& $M$	& prime number
  \end{tabular}
		
  \begin{algorithmic}[1]
	\For{$\ell=1,\dots,L$}
		\State draw $\boldz_{\ell}$ from $[0,M-1]^d\cap\Z^d$ uniformly at random
	    \State determine $J_\ell:=\{|\boldk|\colon \boldk\in \Mirror(I),\; \not\exists \boldh\in \Mirror(I)\setminus\{\boldk\}\text{ with }\boldk\cdot \boldz_{\ell}\equiv\boldh\cdot \boldz_{\ell}\imod{M}\}$
	\EndFor
  \end{algorithmic}
  \begin{tabular}{p{1.75cm}p{4.05cm}p{8.49cm}}
    Output: & $\boldz_{1},\dots,\boldz_{L}$ & generating vectors\\
	    & $J_{1},\dots,J_L$ & index sets
\\    \cmidrule{1-3}
	\multicolumn{3}{l}{Complexity:
    $\OO{|\Mirror(I)|(\log|\Mirror(I)|+d)L}=\OO{|\Mirror(I)|(\log|I|+d)L}$}
  \end{tabular}
\end{algorithm}

\begin{remark}
Applying Remark~\ref{rem:aliasing_refined}, we can slightly modify $J_\ell$, cf.\ \eqref{eq:def_Jl}, to
$$\mathring{J}_\ell:=\{|\boldk|\colon \boldk\in \Mirror(I),\; \not\exists \boldh\in \Mirror(I\setminus\{|\boldk|\})\text{ with }\boldk\cdot \boldz_{\ell}\equiv\boldh\cdot \boldz_{\ell}\imod{M}\}\,.$$
Note that $J_\ell\subset\mathring{J}_\ell$ holds and, thus, when using $\mathring{J}_\ell$ instead of $J_\ell$, (a few) more \NISOR condition fulfilling columns of the considered matrices might be identified in each step. As a consequence, the following theoretical results remain valid even when using $\mathring{J}_\ell$.
Note that the calculation of $\mathring{J}_\ell$ is somewhat more complicated than the calculation of $J_\ell$.
\end{remark}

\subsubsection{Greedy improvement}\label{sssec:greedy_improvement}

Applying the last mentioned strategy, an additional simple and affordable improvement immediately catches the eye.
The order of the randomly drawn rank\mbox{-}1 lattices might affect the number of used rank\mbox{-}1 lattices since there are 
at least slight differences in the number of columns in $\FourMat(\Lambda(\boldz,M),\Mirror(I))$ which are \NISOR depending on $\boldz$.
Accordingly, it might be beneficial to determine the column indices of 
$\FourMat(\Lambda(\boldz_\ell,M),\Mirror(I))$ which have the \NISOR condition for each $\ell=1,\dots,L$ first.
We denote the corresponding indices of these columns by $\tilde{J}_\ell\subset\Mirror(I)$ and save the sets $J_\ell:=\abs(\tilde{J}_\ell)$.
Subsequently, we apply a greedy 
approach. The strategy is to pick that rank\mbox{-}1 lattice $\Lambda(\boldz_\ell,M)$ which yields most columns $c_\boldk$, $\boldk\in I$, fulfilling the \NISOR condition w.r.t.\ $\ChebMat(\CosSet(\Lambda(\boldz_\ell,M)),I)$ to fix $\SX_1$. In other words, we determine $\argmax\{|J_\ell|\}:=\{\ell\in\{1,\dots,L\}\colon |J_\ell|=\max_{j=1,\dots,L}|J_j|\}$, pick one $\ell_1\in \argmax\{|J_\ell|\}$ and set $\SX_1=\CosSet(\Lambda(\boldz_{\ell_1},M))$ as well as $J^{(1)}=J_{\ell_1}$.
Subsequently, we update $J_\ell:=J_\ell\setminus J^{(1)}$, $\ell=1,\dots,L$, pick $\ell_2\in\argmax\{|J_\ell|\}$, which is the index of that rank\mbox{-}1 lattice with most \NISOR columns w.r.t.\ $\ChebMat(\CosSet(\Lambda(\boldz_\ell,M)),I\setminus J^{(1)})$ and set 
$\SX_2=\CosSet(\Lambda(\boldz_{\ell_2},M))$ as well as $J^{(2)}=J_{\ell_2}$.

We continue reducing $J_\ell$ by updating $J_\ell:=J_\ell\setminus J^{(2)}$, $\ell=1,\dots,L$, and picking $\ell_3\in\argmax\{|J_\ell|\}$ and so on until 
$\max_{\ell=1,\dots,L}|J_\ell|=0$. Clearly, this might be the case, before all $L$ rank\mbox{-}1 lattices are chosen, i.e., at some point where only $\SX_1$,\dots,$\SX_{L'}$, $L'<L$, are used in order to obtain $\max_{\ell=1,\dots,L}|J_\ell|=0$.
In that way, we select the most contributing rank\mbox{-}1 lattices under consideration in each step and avoid less suitable ones.

After execution of the described algorithm, there are two possibilities:
\begin{itemize}
\item $\bigcup_{k=1}^{L'}J^{(k)}=I$,
\item $\bigcup_{k=1}^{L'}J^{(k)}\subsetneq I$.
\end{itemize}
In the first case, the sampling set $\CosSet\left(\bigcup_{k=1}^{L'}\SX_k\right)$ is a spatial discretization of $\ChebPol(I)$. In the second case,
it is not guaranteed that $\CosSet\left(\bigcup_{k=1}^{L'}\SX_k\right)$ is a spatial discretization of $\ChebPol(I)$.
Of course, it might be one, but to check this requires additional computational effort of higher complexity compared to the calculations performed so far.
Accordingly, a trial and error strategy might be the better option.

Note in the case $\bigcup_{k=1}^{L'}J^{(k)}\subsetneq I$ even $\CosSet\left(\bigcup_{\ell=1}^{L}\Lambda(\boldz_\ell,M)\right)$ might not be a spatial discretization of $\ChebPol(I)$. 
In the case $\bigcup_{k=1}^{L'}J^{(k)}=I$, we call the greedy approach successful.

Clearly, calculating $\argmax\{|J_\ell|\}$ and updating the sets $J_\ell$ requires additional computational effort in each step, which in any case causes at most $\OO{|I| L}$ (not necessarily floating point) operations. Since we achieve at least one additional $\ell$ with $J_\ell=\emptyset$ at the end of each step, we have at most $L$ steps. Consequently, the additional computational effort is in $\OO{|I| L^2}$, i.e., the total computational effort is in
$$
\OO{|\Mirror(I)|(\log|I|+d)L+|I|L^2},
$$
i.e., for $L\lesssim\log|I|$, cf.~\eqref{def:s}, we observe a total complexity in
$\OO{|\Mirror(I)|(\log|I|+d)\log|I|}$.

\subsubsection{Improvement taking Section~\ref{sec:add_impro_general} into account}\label{sssec:improvement_simple}

We assume $|I|\ge 2$ and we fix $\delta\in(0,1]$.
Due to Theorem~~\ref{thm:gen_mr1l_cheb}, at least one of $L:=\lceil 2(\ln|I|-\ln\delta)\rceil$ randomly drawn generating vectors $\boldz\in[0,M-1]^d\cap\Z^d$, $M$ prime and $M > 2\max(|\Mirror(I)|-1, N_I)$, provides a sampling set $\CosSet(\Lambda(\boldz,M))$ such that at least $|I|/L$ columns of $\ChebMat(\CosSet(\Lambda(\boldz,M)),I)$ fulfill the \NISOR condition with probability at least $1-\delta$. Due to Section~\ref{sec:add_impro_general} these columns fulfill the \NISOR condition w.r.t.\ each $\ChebMat(\SX,I)$ provided that $\SX\supset\CosSet(\Lambda(\boldz,M))$ holds.

Consequently, the columns which are already known to fulfill the \NISOR condition do not need to be checked again for this condition. In other words, the remaining part of the task is to construct a sampling set $\SX'$ such that  $\ChebMat(\SX',I')$ has full column rank, where $I'$ is the index set $I$ reduced by those column indices of $\ChebMat(\CosSet(\Lambda(\boldz,M)),I)$ for which the \NISOR condition w.r.t.\ $\ChebMat(\CosSet(\Lambda(\boldz,M)),I)$ holds.

Accordingly, the following strategy yields a spatial discretization of $\ChebPol(I)$. We determine $M_0 =\operatorname{nextprime}(2\max(|\Mirror(I)|-1,N_I))$ and draw a set of $L=\lceil 2(\ln|I|-\ln\delta)\rceil$ generating vectors $\boldz_\ell^{(0)}\in[0,M_0-1]^d$ uniformly at random. For each $\ell\in\{1,\dots,L\}$, we determine the sets $\tilde{J}_\ell^{(0)}\subset\Mirror(I)$ which collect the indices of $\FourMat(\Lambda(\boldz_\ell^{(0)},M_0),\Mirror(I))$ that fulfill the \NISOR condition. Subsequently, we fix $\SX_0$ which is the best of the $L$ cosine transformed rank\mbox{-}1 lattices.

We choose the 
cardinality of $J_\ell^{(0)}=\abs(\tilde{J}_\ell^{(0)})$ as quality criteria in order to grade the $L$ randomly drawn rank\mbox{-}1 lattices $\Lambda(\boldz_\ell^{(0)},M_0)$, because the larger the $|J_\ell^{(0)}|$ is, the more the index set $I$ will be reduced in the following steps.
Similar as in Section~\ref{sssec:greedy_improvement}, we determine $\argmax\{|J_\ell^{(0)}|\}:=\{\ell\in\{1,\dots,L\}\colon |J_\ell^{(0)}|=\max_{j=1,\dots,L}|J_j^{(0)}|\}$, pick one $\ell_0\in \argmax\{|J_\ell^{(0)}|\}$, fix $J^{(0)}_{\max}=J_{\ell_0}^{(0)}$ and 
$\SX_0=\CosSet\left(\Lambda(\boldz_{\ell_0}^{(0)},M_0)\right)$.

Subsequently, we restart the just mentioned process for $I_1=I\setminus J_{\max}^{(0)}$ with fixed $L:=\lceil 2(\ln|I|-\ln\delta)\rceil$ in order to determine $\SX_1$.
In detail, we determine a prime number $M_1=\operatorname{nextprime}(2\max(|\Mirror(I_1)|-1,N_{I_1}))$, draw $L$ generating vectors $\boldz_\ell^{(1)}\in[0,M_1-1]^d$ uniformly at random, determine $\tilde{J}_\ell^{(1)}\subset\Mirror(I_1)$ which collect the indices of $\FourMat(\Lambda(\boldz_\ell^{(1)},M_1),\Mirror(I_1))$ that fulfill the \NISOR condition, compute  $J_\ell^{(1)}=\abs(\tilde{J}_\ell^{(1)})$, $\ell=1,\dots,L$, and pick one $\ell_1\in \argmax\{|J_\ell^{(1)}|\}$. Then, the sets $J^{(1)}_{\max}=J_{\ell_1}^{(1)}$ and $\SX_1=\CosSet\left(\Lambda(\boldz_{\ell_1}^{(1)},M_1)\right)$ can be used in order to continue the reduction of the index set $I_2=I_1\setminus J^{(1)}_{\max}$ under consideration.
We continue iteratively until we get $I_{K}=I_{K-1}\setminus J_{\max}^{(K-1)}=\emptyset$.

Obviously, the cardinalities of the mirrored index sets $\Mirror(I_j)$ as well as the expansions $N_{I_j}$, $j=1,\dots$, do not increase and consequently, the approach builds up a sequence of cosine transformed lattices $\SX_1,\SX_2,\dots$ with non-increasing lattice sizes $M_1,M_2,\dots$.

In the following, we estimate an upper bound on a specific failure probability.
To this end, we iteratively apply Theorem~\ref{thm:prob_bound_T_cheb} and we assume $I\subset\N_0^d$ and $\delta\in(0,1]$ are given and $L\ge \lceil 2(\log |I|-\log\delta)\rceil\in\N$ is determined and fixed.
We denote the remaining index sets under consideration by $I_j$, i.e., $I_0=I$, $I_1=I_0\setminus J^{(0)}_{\max}$,\dots, $I_{j+1}=I_j\setminus J^{(j)}_{\max}$,\dots.
The application of Theorem~\ref{thm:prob_bound_T_cheb} to a single step of the strategy explained above yields the following lemma.

\begin{lemma}\label{lem:prob_one_step_iterative}
We consider the frequency set $I_0\subset\N_0^d$, $|I_0|<\infty$, and $I_j\subset I_0$ which is a remaining index set arising from the strategy explained above.
In addition, $\delta\in(0,1]$ as well as $L\in\N$, $L\ge 2(\log|I_0|-\log\delta)$, are assumed to be fixed. Then, we choose a prime number $M_j$ such that $M_j>\max(2(|\Mirror(I_j)|-1), 2N_{I_j})$ and we draw $L$ generating vectors $\boldz_\ell\in[0,M_j-1]^d$, $\ell=1,\dots,L$, i.i.d. uniformly at random. Then the probability that none of the $L$ rank\mbox{-}1 lattices provides a Chebyshev matrix $\ChebMat(\CosSet(\Lambda(\boldz_\ell,M_j)),I_j)$ that has at least $|I_j|/L$ columns that fulfill the \NISOR condition is bounded from above by $\frac{|I_j|}{|I_0|}\delta$.
\end{lemma}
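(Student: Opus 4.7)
The plan is to reduce the event in question to the event that some frequency $\boldk\in I_j$ aliases in every one of the $L$ lattices, and then bound the latter by the Hoeffding-type calculation already carried out in the proof of Theorem~\ref{thm:gen_mr1l_cheb}.

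First, I would denote by $Z_\ell$ the number of columns of $\ChebMat(\CosSet(\Lambda(\boldz_\ell,M_j)),I_j)$ fulfilling the \NISOR condition, and introduce, for each $\boldk\in I_j$, the auxiliary count $\tilde U_\boldk:=\#\{\ell\in\{1,\dots,L\}\colon \boldc_\boldk\text{ is \NISOR in lattice }\ell\}$. Double counting of lattice-column pairs immediately gives $\sum_{\ell=1}^L Z_\ell=\sum_{\boldk\in I_j}\tilde U_\boldk$. The event to be controlled, $E:=\{Z_\ell<|I_j|/L\text{ for all }\ell\}$, therefore satisfies $E\subseteq\{\sum_\ell Z_\ell<|I_j|\}=\{\sum_\boldk\tilde U_\boldk<|I_j|\}$, and since the $\tilde U_\boldk$ are non-negative integers an elementary pigeonhole step yields $\{\sum_\boldk\tilde U_\boldk<|I_j|\}\subseteq\{\exists\,\boldk\in I_j\colon \tilde U_\boldk=0\}$. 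On the event $E$, at least one Chebyshev column $\boldc_{\boldk^*}$ is therefore never \NISOR.

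Second, I would transfer this conclusion to the Fourier side via the contrapositive of Lemma~\ref{lem:C_linear independent_columns} applied to $\boldh=\boldk^*\in\Mirror(I_j)$: if $\boldc_{\boldk^*}$ fails to be \NISOR in lattice $\ell$, then $\bolda_{\boldk^*}$ is not \NISOR in $\FourMat(\Lambda(\boldz_\ell,M_j),\Mirror(I_j))$, and Lemma~\ref{lem:basic_aliasing} (columns are pairwise either identical or orthogonal, and all nonzero) forces $\boldk^*$ to alias to some $\boldh\in\Mirror(I_j)\setminus\{\boldk^*\}$ in that lattice. Consequently $E\subseteq\bigcup_{\boldk\in I_j}\{\boldk\text{ aliases in all }L\text{ lattices}\}$. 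The per-frequency probability in this union is then estimated by Lemma~\ref{lem:count_aliasing_probability_cheb} together with Hoeffding's inequality, exactly as in the proof of Theorem~\ref{thm:gen_mr1l_cheb} with $c=2$ (admissible since $M_j>2(|\Mirror(I_j)|-1)$ and $M_j>2N_{I_j}$). This yields $\mathcal{P}(\boldk\text{ aliases in all }L\text{ lattices})\leq\e^{-L/2}$, and the hypothesis $L\geq 2(\ln|I_0|-\ln\delta)$ gives $\e^{-L/2}\leq\delta/|I_0|$. A union bound over the $|I_j|$ elements of $I_j$ then produces the claimed bound $|I_j|\delta/|I_0|$.

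The step I expect to require the most care is the very first reduction: averaging the $Z_\ell$'s and applying Markov only gives trivial bounds, and the correct move is the double counting $\sum_\ell Z_\ell=\sum_\boldk\tilde U_\boldk$ followed by the integer pigeonhole. This combinatorial observation is what allows a bound whose decay in $L$ (via Hoeffding on a \emph{single} frequency) is strong enough to absorb the $|I_j|$ factor from the union bound and still preserve the normalization $\delta/|I_0|$ inherited from the outer iterative scheme.
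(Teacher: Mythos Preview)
Your proposal is correct and follows essentially the same approach as the paper: the double-counting identity $\sum_\ell Z_\ell=\sum_{\boldk}\tilde U_\boldk$, the integer pigeonhole to extract a frequency that is never \NISOR, and the final Hoeffding bound with $c=2$ are exactly the paper's argument. The only cosmetic difference is that the paper applies the Chebyshev--Fourier transfer (Lemma~\ref{lem:C_linear independent_columns}) \emph{before} the double counting, working throughout with the indicators $Y_\ell^{\boldk}$ on the Fourier side, whereas you perform the combinatorics on the Chebyshev side and transfer per frequency afterwards; both orderings are valid and yield the same chain of inclusions.
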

\begin{proof}
For fixed $\ell\in\{1,\dots,L\}$ we consider the Fourier matrix $\FourMat(\Lambda(\boldz_\ell,M_j),\Mirror(I_j))$ and we denote their columns by $\bolda_\boldh^{\ell}$, $\boldh\in\Mirror(I_j)$. In particular, we take a closer look at the columns $\bolda_\boldk^{\ell}$, $\boldk\in I_j$.
If we assume that there are at least $|I_j|/L$ columns within $\{\bolda_\boldk^{\ell}\colon \boldk\in I_j\}$, that fulfill the \NISOR condition w.r.t.\ $\FourMat(\Lambda(\boldz_\ell,M_j),\Mirror(I_j))$, then the corresponding Chebyshev matrix $\ChebMat(\CosSet(\Lambda(\boldz_\ell,M_j)),I_j)$ has at least $|I_j|/L$ columns that fulfill the \NISOR condition w.r.t.\ $\ChebMat(\CosSet(\Lambda(\boldz_\ell,M_j)),I_j)$ according to Section~\ref{sec:add_impro_general}. Consequently, we observe the following subset properties of several events, where we use the notation from the proofs of Theorems~\ref{thm:gen_mr1l_cheb} and~\ref{thm:prob_bound_T_cheb}.
\begingroup
\allowdisplaybreaks[3]
\begin{align*}
&\left\{\{\Lambda(\boldz_\ell,M_j)\colon \ell=1,\dots,L\}\colon \text{\parbox{8cm}{none of the $L$ rank\mbox{-}1 lattices $\Lambda(\boldz_\ell,M_j)$, provides a $\ChebMat(\CosSet(\Lambda(\boldz_\ell,M_j)),I_j)$ that has at least $|I_j|/L$ columns that fulfill the \NISOR condition}}\right\}\\
&\qquad \subset
\left\{\{\Lambda(\boldz_\ell,M_j)\colon \ell=1,\dots,L\}\colon \text{\parbox{8cm}{none of the $L$ rank\mbox{-}1 lattices $\Lambda(\boldz_\ell,M_j)$, provides a $\FourMat(\Lambda(\boldz_\ell,M_j),\Mirror(I_j))$ that has at least $|I_j|/L$ columns out of $\{\bolda_\boldk^{\ell}\colon \boldk\in I_j\}$ that fulfill the \NISOR condition}}\right\}\\
&\qquad = \left\{\min_{\ell\in\{1,\dots,L\}}\sum_{\boldk\in I_j}Y_\ell^\boldk > \frac{L-1}{L}|I_j|\right\}
=\left\{\max_{\ell\in\{1,\dots,L\}}\sum_{\boldk\in I_j}(1-Y_\ell^\boldk) < |I_j|/L\right\}\\
&\qquad \subset
\left\{\sum_{\ell=1}^L\sum_{\boldk\in I_j}(1-Y_\ell^\boldk) < |I_j|\right\}=\left\{\sum_{\boldk\in I_j}(L-\sum_{\ell=1}^L Y_\ell^\boldk) < |I_j|\right\}\\
&\qquad \subset \left\{\exists \boldk\in I_j\colon L-\sum_{\ell=1}^L Y_\ell^\boldk=0\right\}
=\bigcup_{\boldk\in I_j}\left\{\sum_{\ell=1}^LY_\ell^{\boldk}=L\right\}
\end{align*}
\endgroup
Clearly, the probability of the right hand side $\bigcup_{\boldk\in I_j}\left\{\sum_{\ell=1}^LY_\ell^{\boldk}=L\right\}$ is already estimated in Theorems~\ref{thm:prob_bound_T_cheb} and~\ref{thm:gen_mr1l_cheb}. Note that $c=2$ is fixed here. We obtain
$$
\mathcal{P}\left(\bigcup_{\boldk\in I_j}\left\{\sum_{\ell=1}^LY_\ell^{\boldk}=L\right\}\right)\le |I_j|\e^{-L/2}
\le 
|I_j|\e^{-\log|I_0|+\log\delta}=\frac{|I_j|}{|I_0|}\delta\,.
$$
\end{proof}
The last Lemma states that the probability that we cannot reduce $I_j$ by at least $|I_j|/L$ in the $(j+1)$th step is bounded by $\frac{|I_j|}{|I_0|}\delta$. We apply this to
\begin{align*}
&A:=\left\{\text{\parbox{8cm}{in any of the first $K$ steps none of the $L$ rank\mbox{-}1 lattices $\Lambda(\boldz_\ell,M_j)$, provides a $\ChebMat(\CosSet(\Lambda(\boldz_\ell,M_j)),I_j)$ that has at least $|I_j|/L$ columns that fulfill the \NISOR condition}}\right\}\\
&\qquad = \bigcup_{j=0}^{K-1} \left\{\text{\parbox{8cm}{the $(j+1)$th step is the first, where none of the $L$ rank\mbox{-}1 lattices $\Lambda(\boldz_\ell,M_j)$, provides a $\ChebMat(\CosSet(\Lambda(\boldz_\ell,M_j)),I_j)$ that has at least $|I_j|/L$ columns that fulfill the \NISOR condition}}\right\}
=: \bigcup_{j=0}^{K-1} A_j
\end{align*}
and exploit that in the event $A_j$ the relation $|I_{j}|\le \left(\frac{L-1}{L}\right)^{j}|I_0|$ holds. We gain
$$
\mathcal{P}(A)\le \sum_{j=0}^{K-1} \mathcal{P}(A_j)\le \frac{\delta}{|I_0|} \sum_{j=0}^{K-1}|I_j|  \le \delta \sum_{j=0}^{K-1} \left(\frac{L-1}{L}\right)^{j}\le L\delta
$$
for each $K\in\N$.

\begin{remark}\phantom{a}\\\label{rem:improvement_taking_sec21}
\begin{itemize}
\item 
For $r\ge 0$, we fix $L:=\max\left(10,2\left\lceil 2(1+r)\log |I|\right\rceil\right)$. Then, the above mentioned approach will determine a discretization of $\ChebPol(I)$ with probability at least $1-|I|^{-r}$. This is a consequence of the last considerations, when choosing $\delta=|I|^{-r}L^{-1}$ and taking $L-\lceil 2\log L\rceil\ge \frac{L}{2}$ for $L\ge 10$ into account. Note that $L=2\left\lceil 2(1+r)\log |I|\right\rceil$ holds for all $|I|\ge 8$ and $r\ge 0$.

\item Taking the aforementioned parameter choices into account, we estimate the number of steps which lead to a spatial discretization.
For $L\ge 2$, we compare the geometric series $\sum_{k=0}^\infty L^{-k}$ and the Taylor series of the exponential function, which yields

$$
\left(\frac{L}{L-1}\right)> \e^{1/L} \qquad\Rightarrow\qquad \left(\frac{L-1}{L}\right)^{\frac{L^2}{4(r+1)}}< \e^{-\frac{L}{4(r+1)}}\le \frac{1}{|I|}\,,
$$
i.e.,  $\left|I_{\frac{L^2}{4(r+1)}}\right| \le \left(\frac{L-1}{L}\right)^{\frac{L^2}{4(r+1)}}|I|<1$ in cases where no failure occurs. Accordingly, the proposed strategy yields a spatial discretization consisting of not more than $\frac{L^2}{4(r+1)}$ cosine transformed rank\mbox{-}1 lattices with probability $1-|I|^{-r}$ at least.

\item

For $|I|\ge 8$ with $N_I\le \frac{|\Mirror(I)|}{\log|I|}$, cf.~\eqref{eq:max_occur_powers}, we estimate the number of sampling nodes. With probability $1-|I|^{-r}$, we gain $|I_j|\le\left(\frac{L-1}{L}\right)^j|I|$ for all $j\in\N$ and, thus, we estimate the number of sampling nodes within $\SX=\CosSet\left(\bigcup_{j=1}^{K}\Lambda(\boldz_\ell,M_j)\right)$ by
\begin{align}
|\SX|&\le \sum_{j=0}^{K}
2\max\left(|\Mirror(I_j)|-1,N_{I_j}\right)\le 2\sum_{j=0}^{\frac{L^2}{4(r+1)}-1}\max\left(2^d\left(\frac{L-1}{L}\right)^j|I|,N_I\right)\nonumber\\
&\le 2\frac{|\Mirror(I)|}{\log|I|}\frac{L^2}{4(r+1)}+2^{d+1}|I|L\le 2^{d+1}|I|\left(\frac{L^2}{4(r+1)\log |I|}+L\right)\le \frac{9}{2}2^{d}L\,|I|\nonumber\\
&\le 45\,2^{d-1}(r+1)|I|\log|I|\in\OO{2^d|I|\ln|I|}\text{ for fixed $r$,}\label{eq:est_number_samples_532}
\end{align}
where the first inequality holds due to Remark~\ref{rem:remark_cards_cheb_lattices} taking Remark~\ref{rem:NISOR_at_least_two_columns} and that all columns of $\FourMat(\ST,\Mirror(I_j))$ are nonzero, i.e. $|\Mirror(I_{j+1})|\in\N_0\setminus\{1\}$, into account. The last inequalities in \eqref{eq:est_number_samples_532} are valid due to 
the general estimate $L\le 5(1+r)\log|I|$. Note that the upper bound on the number of sampling nodes $|\SX|$ has the same complexity as the one given in Section~\ref{sec:constr_wo_index_set} and the upper bound on the failure probability is also the same.
\item Limiting the number of iterations of the described approach to $\frac{L^2}{2(r+1)}$, we observe an algorithm that terminates after a finite number of steps. Clearly, the probability estimates still hold true for large enough $L$.
A rough estimate on the complexity of the algorithm leads to $\OO{|\Mirror(I)|(\log|I|+d)L^3}$, i.e., $\OO{|\Mirror(I)|(d+\log|I|)(\log|I|)^3}$ for $L\lesssim \log|I|$.
\end{itemize}
\end{remark}

\subsubsection{Further improvement}\label{sssec:further_improvement}

We additionally introduce some further improvement based on numerical observations.
After we have implemented the approach of the Section~\ref{sssec:improvement_simple} and considered the sets $J_\ell^{(j)}$, we find that their cardinality usually far exceeds the proportion $\frac{1}{L}$ of the cardinality of the index set $I_j$ under consideration. Often, many of the sets $J_\ell^{(j)}$ have cardinalities much greater than $|I_j|/2$, which is at least an indicator that possibly much smaller rank\mbox{-}1 lattice sizes can be used in the last mentioned approach (since therein we exploit that only a single $J_\ell^{(j)}$ necessarily has a cardinality $|I_j|/L$).

This leads directly to the obvious idea of reducing the lattice sizes in each step appropriately.
We describe one possibility that has proven successful in our numerical tests.
First we fix $L:=\max(10,2\left\lceil 2(1+r)\log |I|\right\rceil)$ and in the $(j+1)$th iteration, we determine 
$M_j=\operatorname{nextprime}(\max(c(|\Mirror(I_j)|-1),2N_{I_j}))$. Then we calculate the set $\mathcal{P}^{(j)}=\{p\in[3,M_j]\colon p\text{ prime}\}$ of all primes within $[3,M_j]$ and apply a \emph{sorted set halving method} similar to a bisection. In short words, we apply Algorithm~\ref{alg:determine_J_ell} to $I_j$, $L$, and the median(s) $P$ of $\mathcal{P}^{(j)}$. Subsequently, we consider the output and determine one set $J_\ell$ of highest cardinality and the associated $\boldz_\ell$. We refer to them as $|J_{\max}^{(j)}(P)|$ and $\boldz^{(j)}(P)$. We update the set $\mathcal{P}^{(j)}$
\begin{equation}
\mathcal{P}^{(j)}:=\begin{cases}
[P,\max(\mathcal{P}^{(j)})]\cap\mathcal{P}^{(j)},&\text{ $|J_{\max}^{(j)}(P)|< t^{(j)}$,} \\
[\min(\mathcal{P}^{(j)}),P]\cap\mathcal{P}^{(j)},&\text{ $|J_{\max}^{(j)}(P)|\ge t^{(j)}$,}
\end{cases}\label{eq:sorted_set_halving}
\end{equation}
where $t$ is an appropriate threshold.
Based on theory, at least $t^{(j)}\ge |I_j|/L$ should be selected.
We iteratively apply this sorted set halving method until the set $\mathcal{P}^{(j)}$ contains a last remaining element $P'$ and, thus, we acquire the corresponding rank\mbox{-}1 lattice $\Lambda(\boldz^{(j)}(P'),P')$. Depending on $t^{(j)}$, $P'$ might be significantly smaller than $M_j$.
We fix $\SX_j=\CosSet\left(\Lambda(\boldz^{(j)}(P'),P')\right)$, compute $I_{j+1}=I_j\setminus J_{\max}^{(j)}(P')$ and proceed further, i.e., we apply the algorithm explained above to $I_{j+1}$ with fixed $L:=\max(10,2\left\lceil 2(1+r)\log |I|\right\rceil)$.
Now we iteratively repeat the described procedure up to the point at which the remaining index set is empty. The union of the determined cosine transformed rank\mbox{-}1 lattices is a spatial discretization of $\ChebPol(I)$ due to Section~\ref{sec:JoiningSamplingSets}.
\begin{remark}\phantom{a}
\begin{itemize}
\item
Fixing the threshold $t^{(j)}\ge |I_j|/L$, the theoretical upper bounds on the failure probability and the theoretical upper bounds on the cardinality of the resulting sampling sets from Section~\ref{sssec:improvement_simple} still hold.
\item Assuming $N_I\lesssim |\Mirror(I)|$, we have $M_j\in\OO{|\Mirror(I)|}$
and, consequently, the additional sorting set halving method implemented in this section increases the computational effort by a factor of at most
$\log|\Mirror(I)|\lesssim d+\log|I|$ compared to the method described in Section~\ref{sssec:improvement_simple} which yields a total complexity in 
$$
\OO{|\Mirror(I)|(d+\log|I|)^2(\log|I|)^3}\subset \OO{|\Mirror(I)|d^2(\log|I|)^5},
$$
if the number of iterations is limited to $\frac{L^2}{2(r+1)}$, $L\lesssim\log|I|$,
cf.\ also Remark~\ref{rem:improvement_taking_sec21}.
\end{itemize}
\end{remark}

\section{Numerical tests}\label{sec:numtestcheb}

We implemented the algorithms suggested by Theorem~\ref{thm:prob_bound_T_cheb} and Sections~\ref{sssec:greedy_improvement}, \ref{sssec:improvement_simple}, \ref{sssec:further_improvement} in \uppercase{Matlab}$^{\text{\tiny \textregistered}}$. We fixed $r=1$ and $\delta=|I|^{-r}$, i.e., numbers $L=\lceil 4\log|I|\rceil$ when applying the algorithms proposed by Theorem~\ref{thm:prob_bound_T_cheb} and Section~\ref{sssec:greedy_improvement} and $L=\max\left(10,2\left\lceil 4\log |I|\right\rceil\right)$ when applying the algorithms developed in 
Sections~\ref{sssec:improvement_simple} and~\ref{sssec:further_improvement}.
According to the theory in Sections~\ref{sec:theory} and~\ref{sec:improvements}, these parameter choices guarantee failure probabilities not greater than $|I|^{-1}$.

In addition, the \emph{sorted set halving strategy} that we built into Section~\ref{sssec:further_improvement} requires certain threshold values $t^{(j)}\ge|I_j|/L$ which we have fixed with $t^{(j)}:=|I_j|/2$, cf.~\eqref{eq:sorted_set_halving}. Obviously, these threshold values are much higher than the theory suggests. 
However, several numerical tests, which we do not document here, have shown that when lower threshold values are taken into account, the sizes of the rank\mbox{-}1 lattices determined in the iteration steps do not become significantly smaller, but the number of rank\mbox{-}1 lattices used increases, which means that the total number of sampling nodes is generally higher.
In addition, even larger threshold values lead to significantly larger lattice sizes in the first steps, which often also lead to higher total numbers of sampling nodes.

Each numerical test was repeated ten times and the presented numbers of sampling nodes as well as the plotted condition numbers are the maximal values of the ten tests. Generally, the minimal values are comparable to the maximal values at least in its order of magnitude. In most reasonable cases, i.e., $|I|>100$ and when applying the algorithms developed in Sections~\ref{sssec:improvement_simple} and~\ref{sssec:further_improvement}, the minimal numbers of sampling nodes are greater than 90\% of the specified maximal numbers.

\subsection{Comparison to single rank\mbox{-}1 Chebyshev lattices}

In \cite[Tables~I and~II]{PoVo15}, the authors presented results for single rank\mbox{-}1 Chebyshev lattices that allow for the exact reconstruction of polynomials in the spans $\ChebPol(I_n^d)$ and $\ChebPol(H_n^d)$ of Chebyshev polynomials, where $I_n^d:=\{\boldk\in\N_0^d\colon\|\boldk\|_1\le n\}$ are $\ell_1$-ball index sets and $H_n^d:=\{\boldk\in\N_0^d\colon \prod_{j=1}^d\max(1,k_j)\le n\}$ are hyperbolic cross index sets. In fact, for a given index set $I\subset\N_0^d$ the specified algorithm computes a single rank\mbox{-}1 lattice $\Lambda(\boldz,2M)\subset\T^d$ such that the Chebyshev evaluation matrix $\ChebMat(\CosSet(\Lambda(\boldz,2M)),I)$ has full column rank. The sampling set $\CosSet(\Lambda(\boldz,2M))\subset[-1,1]^d$ has a cardinality of at most $M+1$.
We would like to point out, that finding relatively small $M$ is one crucial challenge and causes high computational effort which is roughly the same when applying the algorithms given by \cite{KuoMiNoNu19}. Accordingly, the rank\mbox{-}1 lattices that results in the given size parameters $M$ in \cite{PoVo15} are the outcome of resource-consuming computations.

In contrast to that, the newly developed strategies for determining the discretizations consisting of multiple cosine transformed rank\mbox{-}1 lattices are inexpensive at least in its complexity. Therefore, these strategies are better suited for use in adaptive algorithms. %

We have applied the newly developed algorithms to the index sets that has been numerically treated in \cite[Tables~I and~II]{PoVo15}. Tables~\ref{tab:cheb_l1ball} and~\ref{tab:cheb_symhc} present the resulting numbers of sampling nodes in columns four to seven. The last columns of the tables shows the results from \cite[Tables~I and~II]{PoVo15}.
\begin{table}
\footnotesize
\begin{tabular}{|r|r||r||r|r|r|r||r|}
\toprule
\multicolumn{2}{|c||}{\footnotesize \textsc{Parameters}}& \multicolumn{6}{c|}{\footnotesize \textsc{Cardinalities}}\\
\midrule
$d$ & $n$ & $|I_n^d|$ & {\scriptsize [Thm.~\ref{thm:prob_bound_T_cheb}]} $M_1$ & {\scriptsize [Sec.~\ref{sssec:greedy_improvement}]} $M_2$ & {\scriptsize [Sec.~\ref{sssec:improvement_simple}]} $M_3$ & {\scriptsize [Sec.~\ref{sssec:further_improvement}]} $M_4$ & {\scriptsize \cite{PoVo15}} $M$ \\
\midrule
2 & 64 & 2\,145 & 258\,045 & 8\,325 & 8\,325 & 5\,180 & 4\,192\\
2 & 128 & 8\,385 & 1\,222\,222 & 33\,034 & 33\,034 & 20\,743 & 16\,576\\
2 & 256 & 33\,153 & 5\,526\,571 & 131\,586 & 131\,586 & 82\,674 & 65\,920\\
\midrule
3 & 16 & 969 & 168\,505 & 12\,037 & 6\,019 & 3\,221 & 4\,265\\
3 & 32 & 6\,545 & 1\,650\,097 & 45\,837 & 45\,837 & 24\,037 & 33\,361\\
3 & 64 & 47\,905 & 15\,747\,073 & 715\,777 & 357\,889 & 188\,929 & 264\,353\\
\midrule
6 & 4 & 210 & 28\,359 & 3\,868 & 1\,304 & 667 & 1\,461\\
6 & 8 & 3\,003 & 1\,322\,740 & 120\,250 & 40\,238 & 16\,577 & 63\,369\\
6 & 16 & 74\,613 & 84\,823\,831 & 5\,654\,923 & 1\,885\,877 & 660\,292 & 3\,242\,322\\
\midrule
7 & 4 & 330 & 53\,761 & 6\,721 & 2\,264 & 1\,142 & 2\,777\\
7 & 8 & 6\,435 & 3\,907\,981 & 325\,666 & 108\,788 & 41\,871 & 223\,332\\
7 & 16 & 245\,157 & 458\,675\,801 & 27\,520\,549 & 9\,176\,575 & 2\,821\,570 & 21\,254\,517\\
\midrule
10 & 2 & 66 & 3758 & 443 & 242 & 144 & 202\\
10 & 4 & 1\,001 & 234\,193 & 25\,093 & 8\,489 & 4\,052 & 19\,423\\
10 & 8 & 43\,758 & 54\,028\,254 & 3\,769\,414 & 1\,258\,864 & 394\,630 & 5\,912\,807\\
\bottomrule
\end{tabular}
\caption{$\ell_1$-ball index sets and cardinalities of spatial discretizations.}\label{tab:cheb_l1ball}
\end{table}
\begin{table}
\footnotesize
\begin{tabular}{|r|r||r||r|r|r|r||r|}
\toprule
\multicolumn{2}{|c||}{\footnotesize \textsc{Parameters}}& \multicolumn{6}{c|}{\footnotesize \textsc{Cardinalities}}\\
\midrule
$d$ & $n$ & $|H_n^d|$ & {\scriptsize [Thm.~\ref{thm:prob_bound_T_cheb}]} $M_1$ & {\scriptsize [Sec.~\ref{sssec:greedy_improvement}]} $M_2$ & {\scriptsize [Sec.~\ref{sssec:improvement_simple}]} $M_3$ & {\scriptsize [Sec.~\ref{sssec:further_improvement}]} $M_4$ & {\scriptsize \cite{PoVo15}} $M$ \\
\midrule
2 & 256 & 1\,979 & 213\,591 & 20\,671 & 8\,182 & 5\,501 & 66\,050\\
2 & 512 & 4\,305 & 515\,781 & 60\,681 & 18\,695 & 12\,689 & 263\,170 \\
2 & 1\,024 & 9\,311 & 1\,226\,403 & 165\,731 & 41\,410 & 38\,041 & 1\,050\,626 \\
\midrule
3 & 256 & 10\,303 & 2\,228\,733 & 301\,181 & 66\,460 & 38\,977 & 302\,883 \\
3 & 512 & 23\,976 & 5\,872\,267 & 716\,131 & 160\,133 & 95\,091 & 1\,424\,613 \\
3 & 1\,024 & 55\,202 & 14\,785\,585 & 2\,016\,217 & 381\,582 & 227\,288 & 4\,600\,672 \\
\midrule
6 & 16 & 8\,684 & 6\,260\,808 & 507\,634 & 170\,099 & 63\,259 & 303\,396 \\
6 & 32 & 26\,088 & 22\,445\,861 & 2\,189\,841 & 552\,685 & 204\,472 & 1\,751\,513 \\
6 & 64 & 76\,433 & 76\,944\,961 & 6\,839\,553 & 1\,731\,541 & 638\,331 & 8\,979\,932 \\
\midrule
7 & 8 & 7\,184 & 7\,142\,977 & 595\,249 & 198\,668 & 63\,337 & 291\,267 \\
7 & 16 & 23\,816 & 29\,396\,755 & 2\,150\,983 & 720\,667 & 231\,583 & 1\,659\,143 \\
7 & 32 & 75\,532 & 110\,952\,676 & 9\,862\,461 & 2\,477\,496 & 784\,926 & 10\,375\,340 \\
\midrule
10 & 2 & 6\,144 & 15\,845\,341 & 905\,449 & 452\,725 & 97\,931 & 495\,451\\
10 & 4 & 27\,904 & 99\,261\,616 & 4\,842\,031 & 2\,421\,270 & 500\,766 & 3\,083\,988 \\
10 & 8 & 109\,824 & 508\,497\,748 & 32\,457\,304 & 10\,821\,189 & 2\,205\,847 & 25\,099\,619 \\
\bottomrule
\end{tabular}
\caption{Hyperbolic cross index sets and cardinalities of spatial discretizations.}
\label{tab:cheb_symhc}
\end{table}
Not surprisingly, the theoretical approach in Theorem~\ref{thm:prob_bound_T_cheb} which only depends on the cardinalities of the index sets $I$ and $\Mirror(I)$ does not yield spatial discretizations of comparable cardinality. This seems to be due to the fact that the number of necessary rank\mbox{-}1 lattices is clearly overestimated in our theory. However, even the greedy strategy in Section~\ref{sssec:greedy_improvement} leads to spatial discretizations that are almost comparable in their cardinality, i.e., we observe less than three times the cardinalities determined in \cite[Tables~I and~II]{PoVo15}. In some cases we are below the comparative figures.

The two more sophisticated strategies from 
Sections~\ref{sssec:improvement_simple} and~\ref{sssec:further_improvement} lead to even lower cardinalities of the spatial discretizations. In particular the strategy from Section~\ref{sssec:further_improvement} yields cardinalities which are in most cases much lower than those presented in 
\cite[Tables~I and~II]{PoVo15}. Obviously, in the case of $\ell_1$-ball index sets $I_n^2$ in two dimensions, we do not beat the single rank\mbox{-}1 Chebyshev lattice approach. In general, $\ell_1$-balls in two dimensions seem to fit very well to single rank\mbox{-}1 lattice approaches, which is at least indicated by the small oversampling factors $M/|I_n^2|$ less than two.
However, in all other cases we observe lower cardinalities of the spatial discretizations constructed using the strategy presented in Section~\ref{sssec:further_improvement}. In several cases, there are significant reductions in the number of sampling nodes down to factors of less than $1/20$.

\subsection{Dyadic hyperbolic crosses}

In this section we would like to focus on the difference between the construction of spatial discretizations for $\Pi(\Mirror(I))$ and the construction of spatial discretizations for $\ChebPol(I)$ using rank\mbox{-}1 lattices and their cosine transformed sampling nodes.
In particular, spatial discretizations of $\Pi(\Mirror(I))$ can be cosine transformed to spatial discretizations of $\ChebPol(I)$. Note that the  number of the lattice nodes are approximately halved after cosine transform.
In that sense, we can halve the number of nodes within spatial discretizations $\Pi(\Mirror(I))$ consisiting of rank\mbox{-}1 lattices and compare that number to the number of sampling nodes the newly developed approaches yield. For comparison, we use the results from \cite{Kae17} for dyadic hyperbolic cross index sets. To this end, we define the dyadic hyperbolic cross
$$
\tilde{H}_n^d:=\bigcap_{\|\boldj\|_1=n}G_{j_1}\times\dots\times G_{j_d},\qquad G_j=(2^{j-1},2^{j-1}]\cap\Z,
$$
which were used in \cite{Kae17}. A corresponding well-fitting index set for spans of Chebyshev polynomials is given by $\bar{H}_n^d:=\abs(\tilde{H}_n^d)$, which also has a dyadic construction and is downward closed, i.e.,
$$
\bar{H}_n^d=\bigcap_{\|\boldj\|_1=n}\bar{G}_{j_1}\times\dots\times \bar{G}_{j_d},\qquad \bar{G}_j=[0,2^{j-1}]\cap\Z.
$$
Due to the non-symmetry of $\tilde{H}_n^d$, we observe $\tilde{H}_n^d\subset\Mirror(\bar{H}_n^d)$. Accordingly, spatial discretizations of 
$\Pi(\tilde{H}_n^d)$ might not be spatial discretizations of $\Pi(\Mirror(\bar{H}_n^d))$ and a direct comparison is not possible. However, we will compare oversampling factors, i.e., the ratio of the number of sampling nodes and the dimension of the function spaces that are discretized. Based on theoretical considerations, we should expect oversampling factors to behave as $C\frac{|\Mirror(I)|\log|I|}{|I|}\lesssim 2^d\log|I|$. In particular, the order of magnitude of the oversampling factor is expected to be greater by a factor of up to $2^{d}$ compared to those observed for multiple rank\mbox{-}1 lattice spatial discretizations in the trigonometric case. Clearly, this factor is relaxed in cases where the number of simultaneously active variables is bounded below $d$, cf.\ Remark~\ref{rem:ds_dependence}. At this point, we should stress on the fact, that $\tilde{H}_n^d$ as well as $\bar{H}_n^d$ consists of integer vectors that have at most $\min(d,n)$ nonzero entries. Accordingly, the oversampling factors should be bounded from above by $C\,2^{\min(d,n)}\log|I|$, where $C$ is a constant independent of $d$ and $|I|$. Figure~\ref{fig:numtests_hypcross_nfix_cheb} shows oversampling factors for fixed $n\in\{2,3,4,5\}$ and growing $d$. Increasing $n$ by one, we should expect a factor of up to $2$ in the oversampling factors, which is roughly affirmed by the plots even for the less predictable approaches from Section~\ref{sec:constr_w_index_set}.
\begin{figure}[tb]%
\centering%
\begin{tikzpicture}[baseline=(current axis.south)]%
\begin{semilogxaxis}[
     clip=false,
     scale only axis,
     xmin=5,xmax=1500, ymin=0, ymax=150,
     xtick={10,28,78,210,561,1485},
     x tick label style={align=center, font=\scriptsize},
    ytick={1,8,16,32,64,128},
     y tick label style={font=\scriptsize},
     xticklabels={{10\\[0.3em]3},{28\\[0.3em]6},{78\\[0.3em]11},{210\\[0.3em]19},{561\\[0.3em]32},{1\,485\\[0.3em]53}},
    xlabel style={align=center, text width=7cm, font=\footnotesize},
    every axis legend/.append style={nodes={right}},
legend entries={Theorem~\ref{thm:prob_bound_T_cheb}, Section~\ref{sssec:greedy_improvement},Section~\ref{sssec:improvement_simple}, Section~\ref{sssec:further_improvement}
    },
    transpose legend,
    legend to name = leghypcrossnfix,
    legend columns=2,
    legend style={font=\footnotesize, /tikz/every even column/.append style={column sep=0.5cm}},
    title style={font=\footnotesize},
    title={$n=2$},
    width=0.4\textwidth,
    cycle list name=MR1LOF,
    ]
  \addplot coordinates {
(6, 1.883e+01)(10, 2.610e+01)(15, 3.013e+01)(21, 3.905e+01)(28, 4.304e+01)(36, 4.711e+01)(45, 5.193e+01)(55, 5.658e+01)(66, 5.694e+01)(78, 6.232e+01)(91, 6.578e+01)(105, 6.624e+01)(120, 7.101e+01)(136, 7.104e+01)(153, 7.481e+01)(171, 7.541e+01)(190, 7.583e+01)(210, 7.973e+01)(231, 8.058e+01)(253, 8.455e+01)(276, 8.442e+01)(300, 8.480e+01)(325, 8.899e+01)(351, 8.917e+01)(378, 8.946e+01)(406, 9.348e+01)(435, 9.339e+01)(465, 9.382e+01)(496, 9.390e+01)(528, 9.819e+01)(561, 9.798e+01)(595, 9.815e+01)(630, 9.868e+01)(666, 1.024e+02)(703, 1.024e+02)(741, 1.027e+02)(780, 1.028e+02)(820, 1.028e+02)(861, 1.067e+02)(903, 1.069e+02)(946, 1.070e+02)(990, 1.071e+02)(1035, 1.072e+02)(1081, 1.073e+02)(1128, 1.113e+02)(1176, 1.113e+02)(1225, 1.114e+02)(1275, 1.115e+02)(1326, 1.116e+02)(1378, 1.117e+02)(1431, 1.156e+02)(1485, 1.158e+02)
  };
  \addplot coordinates {
(6, 2.500e+00)(10, 2.700e+00)(15, 5.533e+00)(21, 6.048e+00)(28, 6.179e+00)(36, 6.306e+00)(45, 9.756e+00)(55, 6.673e+00)(66, 1.006e+01)(78, 1.040e+01)(91, 1.040e+01)(105, 1.047e+01)(120, 1.066e+01)(136, 1.066e+01)(153, 1.069e+01)(171, 1.078e+01)(190, 1.084e+01)(210, 1.450e+01)(231, 1.099e+01)(253, 1.103e+01)(276, 1.101e+01)(300, 1.106e+01)(325, 1.483e+01)(351, 1.115e+01)(378, 1.491e+01)(406, 1.496e+01)(435, 1.494e+01)(465, 1.501e+01)(496, 1.127e+01)(528, 1.511e+01)(561, 1.507e+01)(595, 1.510e+01)(630, 1.518e+01)(666, 1.517e+01)(703, 1.517e+01)(741, 1.522e+01)(780, 1.523e+01)(820, 1.524e+01)(861, 1.524e+01)(903, 1.528e+01)(946, 1.528e+01)(990, 1.530e+01)(1035, 1.532e+01)(1081, 1.533e+01)(1128, 1.536e+01)(1176, 1.535e+01)(1225, 1.537e+01)(1275, 1.538e+01)(1326, 1.540e+01)(1378, 1.540e+01)(1431, 1.541e+01)(1485, 1.544e+01)
 };
  \addplot coordinates {
(6, 2.500e+00)(10, 2.700e+00)(15, 2.933e+00)(21, 3.143e+00)(28, 3.214e+00)(36, 3.306e+00)(45, 3.467e+00)(55, 3.600e+00)(66, 3.667e+00)(78, 3.769e+00)(91, 3.758e+00)(105, 3.810e+00)(120, 3.883e+00)(136, 3.912e+00)(153, 3.915e+00)(171, 4.047e+00)(190, 4.079e+00)(210, 4.038e+00)(231, 4.117e+00)(253, 4.087e+00)(276, 4.116e+00)(300, 4.170e+00)(325, 4.172e+00)(351, 4.185e+00)(378, 4.225e+00)(406, 4.222e+00)(435, 4.248e+00)(465, 4.254e+00)(496, 4.250e+00)(528, 4.259e+00)(561, 4.287e+00)(595, 4.287e+00)(630, 4.319e+00)(666, 4.315e+00)(703, 4.337e+00)(741, 4.337e+00)(780, 4.354e+00)(820, 4.332e+00)(861, 4.340e+00)(903, 4.361e+00)(946, 4.410e+00)(990, 4.387e+00)(1035, 4.423e+00)(1081, 4.415e+00)(1128, 4.410e+00)(1176, 4.412e+00)(1225, 4.420e+00)(1275, 4.446e+00)(1326, 4.418e+00)(1378, 4.446e+00)(1431, 4.461e+00)(1485, 4.452e+00)
  };
  \addplot coordinates {
(6, 1.333e+00)(10, 2.000e+00)(15, 2.000e+00)(21, 1.905e+00)(28, 1.964e+00)(36, 2.000e+00)(45, 2.156e+00)(55, 2.145e+00)(66, 2.121e+00)(78, 2.179e+00)(91, 2.264e+00)(105, 2.276e+00)(120, 2.317e+00)(136, 2.353e+00)(153, 2.405e+00)(171, 2.497e+00)(190, 2.474e+00)(210, 2.495e+00)(231, 2.494e+00)(253, 2.549e+00)(276, 2.547e+00)(300, 2.600e+00)(325, 2.615e+00)(351, 2.604e+00)(378, 2.659e+00)(406, 2.655e+00)(435, 2.692e+00)(465, 2.729e+00)(496, 2.724e+00)(528, 2.748e+00)(561, 2.729e+00)(595, 2.773e+00)(630, 2.751e+00)(666, 2.766e+00)(703, 2.747e+00)(741, 2.789e+00)(780, 2.805e+00)(820, 2.815e+00)(861, 2.816e+00)(903, 2.834e+00)(946, 2.845e+00)(990, 2.856e+00)(1035, 2.843e+00)(1081, 2.858e+00)(1128, 2.854e+00)(1176, 2.856e+00)(1225, 2.889e+00)(1275, 2.900e+00)(1326, 2.877e+00)(1378, 2.894e+00)(1431, 2.908e+00)(1485, 2.908e+00)
  };
\node  at (axis description cs:0,0) [
    text width=31pt, anchor=north east, align=center,xshift=20pt,yshift=1.4pt] {\scriptsize $|\bar{H}_{2}^d|$};
\node  at (axis description cs:0,0) [
    text width=31pt, anchor=north east, align=center,xshift=20pt,yshift=-11.5pt] {\scriptsize $d$};
 \end{semilogxaxis}%
\end{tikzpicture}%
\hfill
\begin{tikzpicture}[baseline=(current axis.south)]%
  \begin{semilogxaxis}[
    clip=false,
    scale only axis,
    xmin=11,xmax=28000, ymin=0, ymax=150,
    xtick={23,90,375,1559,6577,27773},
    x tick label style={align=center, font=\scriptsize},
    ytick={1,8,16,32,64,128},
    y tick label style={font=\scriptsize},
    xticklabels={{23\\[0.3em]3},{90\\[0.3em]6},{375\\[0.3em]11},{1\,559\\[0.3em]19},{6\,577\\[0.3em]32},{27\,773\\[0.3em]53}},
    xlabel style={align=center, font=\footnotesize},
    title style={font=\footnotesize},
    title={$n=3$},
    width=0.4\textwidth,
    cycle list name=MR1LOF
    ]
  \addplot coordinates {
(12, 2.425e+01)(23, 3.848e+01)(39, 5.310e+01)(61, 6.774e+01)(90, 7.861e+01)(127, 9.292e+01)(173, 1.029e+02)(229, 1.132e+02)(296, 1.229e+02)(375, 1.324e+02)(467, 1.418e+02)%
  };
  \addplot coordinates {
(12, 2.500e+00)(23, 5.957e+00)(39, 7.103e+00)(61, 7.984e+00)(90, 1.311e+01)(127, 1.394e+01)(173, 1.471e+01)(229, 2.058e+01)(296, 1.603e+01)(375, 1.655e+01)(467, 1.701e+01)(573, 1.743e+01)(694, 1.779e+01)(831, 1.816e+01)(985, 1.842e+01)(1157, 1.869e+01)(1348, 2.525e+01)(1559, 2.555e+01)(1791, 2.583e+01)(2045, 2.608e+01)(2322, 2.630e+01)(2623, 2.652e+01)(2949, 2.672e+01)(3301, 2.691e+01)(3680, 2.708e+01)(4087, 2.724e+01)(4523, 2.739e+01)(4989, 2.753e+01)(5486, 2.766e+01)(6015, 2.779e+01)(6577, 2.791e+01)(7173, 2.802e+01)(7804, 3.517e+01)(8471, 2.823e+01)(9175, 2.832e+01)(9917, 2.842e+01)(10698, 2.850e+01)(11519, 2.859e+01)(12381, 2.867e+01)(13285, 3.592e+01)(14232, 2.881e+01)(15223, 2.888e+01)(16259, 3.619e+01)(17341, 2.901e+01)(18470, 3.634e+01)(19647, 3.641e+01)(20873, 3.648e+01)(22149, 3.655e+01)(23476, 3.661e+01)(24855, 3.667e+01)(26287, 3.674e+01)(27773, 3.679e+01)
 };
  \addplot coordinates {
(12, 2.500e+00)(23, 3.217e+00)(39, 3.692e+00)(61, 4.131e+00)(90, 4.544e+00)(127, 4.819e+00)(173, 5.104e+00)(229, 5.424e+00)(296, 5.608e+00)(375, 5.731e+00)(467, 5.893e+00)(573, 6.059e+00)(694, 6.195e+00)(831, 6.295e+00)(985, 6.407e+00)(1157, 6.485e+00)(1348, 6.550e+00)(1559, 6.669e+00)(1791, 6.734e+00)(2045, 6.789e+00)(2322, 6.858e+00)(2623, 6.895e+00)(2949, 6.947e+00)(3301, 6.983e+00)(3680, 7.032e+00)(4087, 7.098e+00)(4523, 7.148e+00)(4989, 7.158e+00)(5486, 7.188e+00)(6015, 7.225e+00)(6577, 7.268e+00)(7173, 7.276e+00)(7804, 7.320e+00)(8471, 7.355e+00)(9175, 7.349e+00)(9917, 7.379e+00)(10698, 7.397e+00)(11519, 7.410e+00)(12381, 7.433e+00)(13285, 7.450e+00)(14232, 7.475e+00)(15223, 7.478e+00)(16259, 7.503e+00)(17341, 7.516e+00)(18470, 7.536e+00)(19647, 7.536e+00)(20873, 7.556e+00)(22149, 7.576e+00)(23476, 7.582e+00)(24855, 7.599e+00)(26287, 7.603e+00)(27773, 7.618e+00)
  };
  \addplot coordinates {
(12, 1.750e+00)(23, 2.217e+00)(39, 2.103e+00)(61, 2.213e+00)(90, 2.544e+00)(127, 2.622e+00)(173, 2.775e+00)(229, 2.926e+00)(296, 3.051e+00)(375, 3.112e+00)(467, 3.195e+00)(573, 3.305e+00)(694, 3.406e+00)(831, 3.473e+00)(985, 3.477e+00)(1157, 3.550e+00)(1348, 3.607e+00)(1559, 3.628e+00)(1791, 3.638e+00)(2045, 3.679e+00)(2322, 3.715e+00)(2623, 3.744e+00)(2949, 3.746e+00)(3301, 3.775e+00)(3680, 3.798e+00)(4087, 3.817e+00)(4523, 3.841e+00)(4989, 3.867e+00)(5486, 3.879e+00)(6015, 3.885e+00)(6577, 3.892e+00)(7173, 3.922e+00)(7804, 3.928e+00)(8471, 3.946e+00)(9175, 3.959e+00)(9917, 3.971e+00)(10698, 3.982e+00)(11519, 3.986e+00)(12381, 3.996e+00)(13285, 4.013e+00)(14232, 4.013e+00)(15223, 4.020e+00)(16259, 4.035e+00)(17341, 4.033e+00)(18470, 4.039e+00)(19647, 4.048e+00)(20873, 4.059e+00)(22149, 4.059e+00)(23476, 4.069e+00)(24855, 4.076e+00)(26287, 4.086e+00)(27773, 4.088e+00)
  };
\node  at (axis description cs:0,0) [
    text width=31pt, anchor=north east, align=center,xshift=20pt,yshift=1.4pt] {\scriptsize $|\bar{H}_{3}^d|$};
\node  at (axis description cs:0,0) [
    text width=31pt, anchor=north east, align=center,xshift=20pt,yshift=-11.5pt] {\scriptsize $d$};
 \end{semilogxaxis}
\end{tikzpicture}\phantom{\rule{4pt}{1em}}%
\\[1em]%
\begin{tikzpicture}[baseline=(current axis.south)]%
\begin{semilogxaxis}[
    clip=false,
    scale only axis,
    xmin=24,xmax=400000, ymin=0, ymax=150,
    xtick={53,264,1519,9273,60025,397978},
    x tick label style={align=center, font=\scriptsize},
    ytick={1,8,16,32,64,128},
    y tick label style={font=\scriptsize},
    xticklabels={{53\\[0.3em]3},{264\\[0.3em]6},{1\,519\\[0.3em]11},{9\,273\\[0.3em]19},{60\,025\\[0.3em]32},{397\,978\\[0.3em]53}},
    xlabel style={align=center, text width=7cm,
    font=\footnotesize},
    title style={font=\footnotesize},
    title={{$n=4$}},
    width=0.4\textwidth,
    cycle list name=MR1LOF
    ]
  \addplot coordinates {
(25, 3.384e+01)(53, 5.315e+01)(98, 7.834e+01)(166, 1.012e+02)(264, 1.270e+02)(400, 1.479e+02)%
  };
  \addplot coordinates {
(25, 5.240e+00)(53, 6.660e+00)(98, 1.238e+01)(166, 1.446e+01)(264, 1.657e+01)(400, 1.849e+01)(583, 2.026e+01)(823, 2.188e+01)(1131, 3.114e+01)(1519, 3.293e+01)(2000, 3.454e+01)(2588, 3.601e+01)(3298, 3.736e+01)(4146, 3.860e+01)(5149, 3.973e+01)(6325, 4.077e+01)(7693, 4.174e+01)(9273, 4.262e+01)(11086, 4.344e+01)(13154, 5.526e+01)(15500, 4.492e+01)(18148, 5.698e+01)(21123, 5.776e+01)(24451, 5.849e+01)(28159, 5.917e+01)(32275, 5.981e+01)(36828, 6.041e+01)(41848, 6.098e+01)(47366, 6.152e+01)(53414, 6.203e+01)(60025, 6.251e+01)(67233, 6.297e+01)(75073, 6.341e+01)(83581, 6.382e+01)(92794, 6.421e+01)(102750, 6.459e+01)(113488, 6.495e+01)(125048, 6.529e+01)(137471, 6.561e+01)(150799, 6.593e+01)(165075, 6.623e+01)(180343, 6.651e+01)(196648, 6.679e+01)(214036, 6.705e+01)(232554, 6.731e+01)(252250, 6.755e+01)(273173, 8.135e+01)(295373, 6.802e+01)(318901, 6.823e+01)(343809, 6.845e+01)(370150, 6.865e+01)(397978, 6.885e+01) 
 };
  \addplot coordinates {
(25, 2.640e+00)(53, 3.491e+00)(98, 4.245e+00)(166, 5.000e+00)(264, 5.674e+00)(400, 6.447e+00)(583, 6.962e+00)(823, 7.484e+00)(1131, 8.027e+00)(1519, 8.438e+00)(2000, 8.842e+00)(2588, 9.194e+00)(3298, 9.543e+00)(4146, 9.865e+00)(5149, 1.011e+01)(6325, 1.039e+01)(7693, 1.062e+01)(9273, 1.084e+01)(11086, 1.104e+01)(13154, 1.122e+01)(15500, 1.141e+01)(18148, 1.157e+01)(21123, 1.170e+01)(24451, 1.184e+01)(28159, 1.198e+01)(32275, 1.211e+01)(36828, 1.222e+01)(41848, 1.233e+01)(47366, 1.244e+01)(53414, 1.253e+01)(60025, 1.263e+01)(67233, 1.271e+01)(75073, 1.280e+01)(83581, 1.288e+01)(92794, 1.296e+01)(102750, 1.303e+01)(113488, 1.310e+01)(125048, 1.317e+01)(137471, 1.323e+01)(150799, 1.328e+01)(165075, 1.334e+01)(180343, 1.340e+01)(196648, 1.345e+01)(214036, 1.351e+01)(232554, 1.355e+01)(252250, 1.360e+01)(273173, 1.365e+01)(295373, 1.369e+01)(318901, 1.373e+01)(343809, 1.377e+01)(370150, 1.381e+01)(397978, 1.385e+01)
  };
  \addplot coordinates {
(25, 2.120e+00)(53, 2.208e+00)(98, 2.316e+00)(166, 2.777e+00)(264, 3.015e+00)(400, 3.303e+00)(583, 3.497e+00)(823, 3.746e+00)(1131, 3.989e+00)(1519, 4.145e+00)(2000, 4.191e+00)(2588, 4.349e+00)(3298, 4.439e+00)(4146, 4.561e+00)(5149, 4.672e+00)(6325, 4.737e+00)(7693, 4.776e+00)(9273, 4.850e+00)(11086, 4.919e+00)(13154, 4.991e+00)(15500, 5.037e+00)(18148, 5.075e+00)(21123, 5.130e+00)(24451, 5.162e+00)(28159, 5.216e+00)(32275, 5.248e+00)(36828, 5.294e+00)(41848, 5.305e+00)(47366, 5.339e+00)(53414, 5.375e+00)(60025, 5.401e+00)(67233, 5.409e+00)(75073, 5.435e+00)(83581, 5.458e+00)(92794, 5.483e+00)(102750, 5.513e+00)(113488, 5.529e+00)(125048, 5.546e+00)(137471, 5.563e+00)(150799, 5.583e+00)(165075, 5.596e+00)(180343, 5.616e+00)(196648, 5.624e+00)(214036, 5.645e+00)(232554, 5.655e+00)(252250, 5.667e+00)(273173, 5.684e+00)(295373, 5.695e+00)(318901, 5.716e+00)(343809, 5.718e+00)(370150, 5.732e+00)(397978, 5.744e+00)
  };
\node  at (axis description cs:0,0) [
    text width=31pt, anchor=north east, align=center,xshift=20pt,yshift=1.4pt] {\scriptsize $|\bar{H}_{4}^d|$};
\node  at (axis description cs:0,0) [
    text width=31pt, anchor=north east, align=center,xshift=20pt,yshift=-11.5pt] {\scriptsize $d$};
 \end{semilogxaxis}%
\end{tikzpicture}%
\hfill
\begin{tikzpicture}[baseline=(current axis.south)]%
\begin{semilogxaxis}[%
    clip=false,
    scale only axis,
    xmin=24,xmax=5000000,ymin=0,ymax=150,
    xtick={122, 738, 5534, 47330, 456089, 4666757},
    x tick label style={align=center, font=\scriptsize},
    ytick={1,8,16,32,64,128},
    y tick label style={font=\scriptsize},
	xticklabels={{122\\[0.3em]3},{738\\[0.3em]6},{5\,534\\[0.3em]11},{47\,330\\[0.3em]19},{456\,089\\[0.3em]32},{4\,666\,757\\[0.3em]53}},
    xlabel style={align=center,%
    font=\footnotesize},
    title style={font=\footnotesize},
    title={$n=5$},
    width=0.4\textwidth,
    cycle list name=MR1LOF
    ]
  \addplot coordinates {
  (53, 4.409e+01)(122, 7.214e+01)(242, 1.005e+02)(437, 1.393e+02)%
  };
  \addplot coordinates {
(53, 5.528e+00)(122, 1.083e+01)(242, 1.371e+01)(437, 1.672e+01)(738, 1.978e+01)(1184, 3.047e+01)(1823, 3.452e+01)(2713, 3.837e+01)(3923, 4.207e+01)(5534, 5.701e+01)(7640, 4.895e+01)(10349, 5.210e+01)(13784, 6.882e+01)(18084, 7.230e+01)(23405, 7.557e+01)(29921, 7.863e+01)(37825, 8.151e+01)(47330, 8.422e+01)(58670, 8.677e+01)(72101, 8.916e+01)(87902, 9.142e+01)(106376, 9.355e+01)(127851, 9.556e+01)(152681, 9.746e+01)(181247, 9.926e+01)(213958, 1.010e+02)(251252, 1.026e+02)(293597, 1.041e+02)(341492, 1.267e+02)(395468, 1.070e+02)(456089, 1.083e+02)(523953, 1.315e+02)(599693, 1.329e+02)(683978, 1.119e+02)(777514, 1.356e+02)(881045, 1.368e+02)(995354, 1.381e+02)(1121264, 1.392e+02)(1259639, 1.403e+02)(1411385, 1.414e+02)(1577451, 1.424e+02)(1758830, 1.434e+02)(1956560, 1.443e+02)(2171725, 1.452e+02)(2405456, 1.461e+02)(2658932, 1.470e+02)(2933381, 1.478e+02)(3230081, 1.486e+02)(3550361, 1.493e+02)%
};
  \addplot coordinates {
(53, 3.113e+00)(122, 3.779e+00)(242, 4.835e+00)(437, 5.847e+00)(738, 6.843e+00)(1184, 7.878e+00)(1823, 8.957e+00)(2713, 9.814e+00)(3923, 1.075e+01)(5534, 1.161e+01)(7640, 1.243e+01)(10349, 1.322e+01)(13784, 1.395e+01)(18084, 1.465e+01)(23405, 1.526e+01)(29921, 1.585e+01)(37825, 1.643e+01)(47330, 1.697e+01)(58670, 1.746e+01)(72101, 1.794e+01)(87902, 1.838e+01)(106376, 1.880e+01)(127851, 1.920e+01)(152681, 1.957e+01)(181247, 1.993e+01)(213958, 2.026e+01)(251252, 2.058e+01)(293597, 2.089e+01)(341492, 2.118e+01)(395468, 2.145e+01)(456089, 2.171e+01)(523953, 2.196e+01)(599693, 2.220e+01)(683978, 2.243e+01)(777514, 2.264e+01)(881045, 2.285e+01)(995354, 2.305e+01)(1121264, 2.324e+01)(1259639, 2.343e+01)(1411385, 2.360e+01)(1577451, 2.377e+01)(1758830, 2.393e+01)(1956560, 2.409e+01)(2171725, 2.424e+01)(2405456, 2.438e+01)(2658932, 2.452e+01)(2933381, 2.466e+01)(3230081, 2.479e+01)(3550361, 2.491e+01)(3895602, 2.504e+01)(4267238, 2.515e+01)(4666757, 2.527e+01)
  };
  \addplot coordinates {
(53, 2.113e+00)(122, 2.246e+00)(242, 2.682e+00)(437, 3.098e+00)(738, 3.526e+00)(1184, 3.959e+00)(1823, 4.196e+00)(2713, 4.551e+00)(3923, 4.951e+00)(5534, 5.004e+00)(7640, 5.308e+00)(10349, 5.439e+00)(13784, 5.655e+00)(18084, 5.823e+00)(23405, 5.916e+00)(29921, 6.036e+00)(37825, 6.186e+00)(47330, 6.308e+00)(58670, 6.437e+00)(72101, 6.554e+00)(87902, 6.652e+00)(106376, 6.835e+00)(127851, 6.802e+00)(152681, 6.897e+00)(181247, 6.973e+00)(213958, 7.034e+00)(251252, 7.112e+00)(293597, 7.185e+00)(341492, 7.258e+00)(395468, 7.314e+00)(456089, 7.381e+00)(523953, 7.435e+00)(599693, 7.493e+00)(683978, 7.542e+00)(777514, 7.586e+00)(881045, 7.636e+00)(995354, 7.690e+00)(1121264, 7.743e+00)(1259639, 7.776e+00)(1411385, 7.819e+00)(1577451, 7.858e+00)(1758830, 7.904e+00)(1956560, 7.930e+00)(2171725, 7.976e+00)(2405456, 8.010e+00)(2658932, 8.046e+00)(2933381, 8.081e+00)(3230081, 8.114e+00)(3550361, 8.142e+00)(3895602, 8.174e+00)(4267238, 8.201e+00)(4666757, 8.233e+00)
  };
\node  at (axis description cs:0,0) [
    text width=31pt, anchor=north east, align=center,xshift=20pt,yshift=1.4pt] {\scriptsize $|\bar H_{5}^d|$};
\node  at (axis description cs:0,0) [
    text width=31pt, anchor=north east, align=center,xshift=20pt,yshift=-11.5pt] {\scriptsize $d$};
 \end{semilogxaxis}
\end{tikzpicture}%
\\[1em]
\ref{leghypcrossnfix}
\caption{
Oversampling factors of spatial discretizations consisting of cosine transformed rank\mbox{-}1 lattices for $\ChebPol(\bar{H}_n^d)$, $n\in\{2,3,4,5\}$ fixed and dimensions $d$ up to $53$.
}\label{fig:numtests_hypcross_nfix_cheb}
\end{figure}
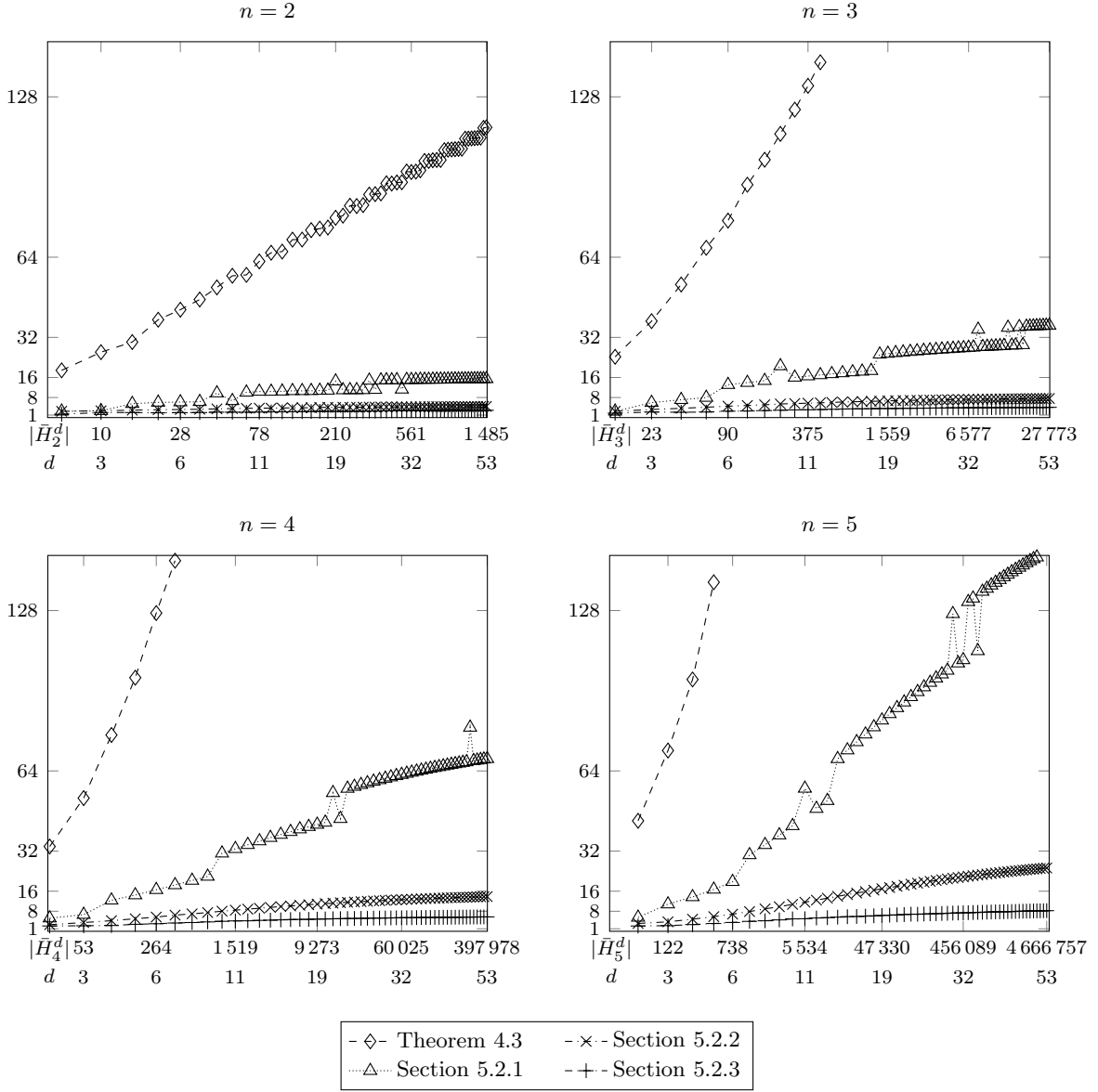

Similar to \cite[Fig. 5.2]{Kae17}, we observe mildly growing or even stagnating oversampling factors for fixed $n$ and growing $d$. In absolute numbers, the oversampling factors of the most sophisticated approach, cf.\ Section~\ref{sssec:further_improvement}, are less than nine and, thus, entirely acceptable.

Moreover, we applied the developed algorithms for constructing spatial discretizations of $\ChebPol(\bar{H}_n^6)$, $1 \le n\le 13$. Figure~\ref{fig:numtests_hypcross_cheb_d6} shows the oversampling factors in a diagram that has logarithmic scales on both axes.  In addition, we have added dotted lines to indicate the dependencies of the oversampling factors on the cardinality of the respective index sets. All four algorithms appear to provide discretizations that have oversampling factors that actually depend logarithmically on the cardinality of the index sets. 
However, the oversampling factors of the approach developed in section~\ref{sssec:further_improvement} are less than $6.1$ even for $\bar{H}_{13}^6$, which has a cardinality of over one million.

Further, we computed condition numbers of matrices $\ChebMat(\SX,\bar{H}_n^6)$, $1\le n\le 8$. Figure~\ref{fig:numtests_hypcross_cheb_d6} shows the maximum condition numbers of the respective ten tests.
The computed condition numbers are comparatively low. 
In fact, in all our numerical tests where we calculated condition numbers, we found low condition numbers of a similar order of magnitude, even when we considered other index sets.
The surprising observation here is that the approach of Section~\ref{sssec:further_improvement} leads to lower (maximal) condition numbers than the approach of Section~\ref{sssec:improvement_simple} for $H_n^6$, $4\le n\le8$, even though fewer sampling nodes are taken.
However, this effect is generally not observed.

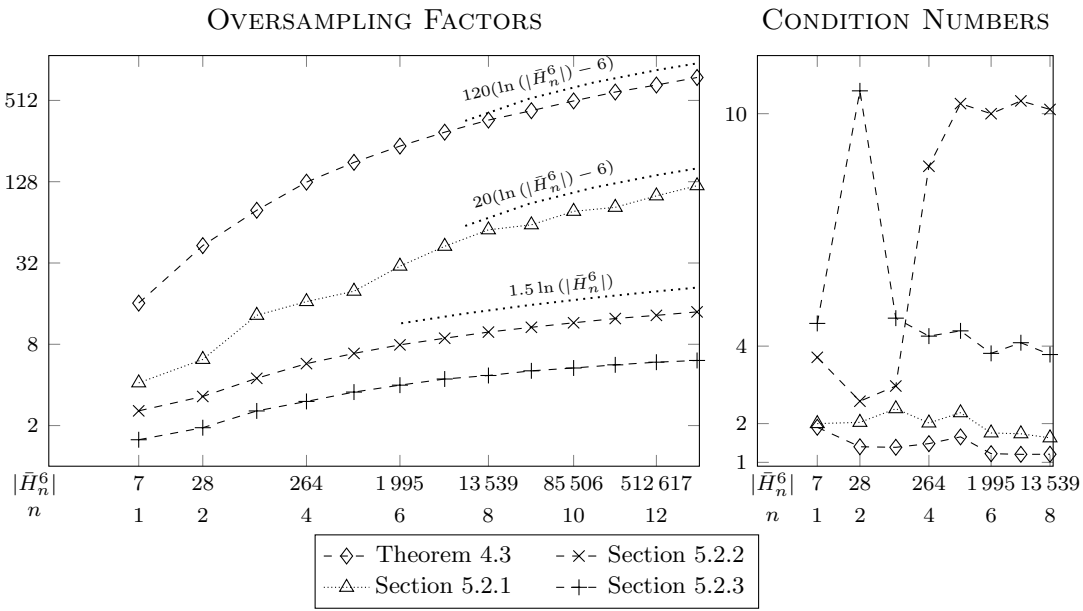
\begin{figure}[bt]
\centering
\begin{tikzpicture}[baseline=(current axis.south)]%
\begin{loglogaxis}[%
    clip=false,
    scale only axis,
    xmin=1,xmax=1300000, ymin=1, ymax=1100,
    title={\textsc{Oversampling Factors}},
    xtick={7,28,264,1995,13539,85506,512617},
    ytick={2,8,32,128,512},
    yticklabels={2,8,32,128,512},
    x tick label style={align=center,font=\scriptsize},
    y tick label style={font=\scriptsize},
    xticklabels={{7\\[0.3em]1},{28\\[0.3em]2},{264\\[0.3em]4},{1\,995\\[0.3em]6},{13\,539\\[0.3em]8}, {85\,506\\[0.3em]10},{512\,617\\[0.3em]12}},
    width=0.55\textwidth,
    height=0.35\textwidth,
legend entries={Theorem~\ref{thm:prob_bound_T_cheb}, Section~\ref{sssec:greedy_improvement},Section~\ref{sssec:improvement_simple}, Section~\ref{sssec:further_improvement}
    },
    transpose legend,
    legend to name = leghypcross6dcheb,
    legend columns=2,
    legend style={font=\footnotesize, /tikz/every even column/.append style={column sep=0.5cm}},
    cycle list name=MR1LOF
    ]
  \addplot coordinates {
(7, 1.614e+01)(28, 4.304e+01)(90, 7.861e+01)(264, 1.270e+02)(738, 1.780e+02)(1995, 2.349e+02)(5253, 2.978e+02)(13539, 3.661e+02)(34281, 4.296e+02)(85506, 5.076e+02)(210540, 5.905e+02)(512617, 6.655e+02)(1235821, 7.569e+02)
};
  \addplot coordinates {
(7, 4.143e+00)(28, 6.179e+00)(90, 1.311e+01)(264, 1.657e+01)(738, 1.978e+01)(1995, 3.032e+01)(5253, 4.254e+01)(13539, 5.633e+01)(34281, 6.137e+01)(85506, 7.724e+01)(210540, 8.267e+01)(512617, 1.005e+02)(1235821, 1.195e+02)
};
  \addplot+[dashed] coordinates {
(7, 2.571e+00)(28, 3.286e+00)(90, 4.478e+00)(264, 5.750e+00)(738, 6.848e+00)(1995, 7.922e+00)(5253, 8.871e+00)(13539, 9.851e+00)(34281, 1.069e+01)(85506, 1.154e+01)(210540, 1.241e+01)(512617, 1.310e+01)(1235821, 1.390e+01)
};
  \addplot+[dashed] coordinates {
(7, 1.571e+00)(28, 1.929e+00)(90, 2.567e+00)(264, 3.019e+00)(738, 3.541e+00)(1995, 3.985e+00)(5253, 4.425e+00)(13539, 4.698e+00)(34281, 5.088e+00)(85506, 5.339e+00)(210540, 5.635e+00)(512617, 5.895e+00)(1235821, 6.071e+00)
};
  \addplot[dotted, line width=0.75pt] coordinates {
 (8192,3.613e+02) (32768,5.277e+02) (131072,6.940e+02) (524288,8.604e+02) (1300000,9.693e+02)
} node[left, xshift=-28pt, yshift=1.5pt, rotate=13] {\tiny $120(\ln (|\bar H_n^6|)-6)$};
  \addplot[dotted, line width=0.75pt] coordinates {
 (8192,6.022e+01) (32768,8.794e+01) (131072,1.157e+02) (524288,1.434e+02) (1300000,1.616e+02)
} node[left, xshift=-28pt, yshift=2pt, rotate=15] {\tiny $20(\ln (|\bar H_n^6|)-6)$};
  \addplot[dotted, line width=0.75pt] coordinates {
(2048,1.144e+01) (8192,1.352e+01) (32768,1.560e+01) (131072,1.768e+01) (524288,1.975e+01) (1300000,2.112e+01)
} node[left, xshift=-28pt, yshift=3pt, rotate=6] {\tiny $1.5\,\ln (|\bar H_n^6|)$};
\node  at (axis description cs:0,0) [text width=31pt, anchor=north east, align=center,xshift=14pt,yshift=1.5pt] {\scriptsize $|\bar H_{n}^6|$};
\node  at (axis description cs:0,0) [text width=31pt, anchor=north east, align=center,xshift=14pt,yshift=-11.5pt] {\scriptsize $n$};
\end{loglogaxis}
\end{tikzpicture}
\begin{tikzpicture}[baseline=(current axis.south)]%
\begin{semilogxaxis}[%
    clip=false,
    scale only axis,
    xmin=1,xmax=15000, ymin=0.9, ymax=11.5,
    title={\textsc{Condition Numbers}},
    xtick={7,28,264,1995,13539},
    ytick={1,2,4,10},
    x tick label style={align=center,font=\scriptsize},
    y tick label style={font=\scriptsize},
    xticklabels={{7\\[0.3em]1},{28\\[0.3em]2},{264\\[0.3em]4},{1\,995\\[0.3em]6},{13\,539\\[0.3em]8}},
    yticklabels={1,2,4,10},
    width=0.25\textwidth,
    height=0.35\textwidth,
    cycle list name=MR1LOF
    ]
  \addplot coordinates {
(7, 1.905e+00)(28, 1.402e+00)(90, 1.388e+00)(264, 1.483e+00)(738, 1.657e+00)(1995, 1.224e+00)(5253, 1.206e+00)(13539, 1.209e+00)
};
  \addplot coordinates {
(7, 2.000e+00)(28, 2.032e+00)(90, 2.384e+00)(264, 2.016e+00)(738, 2.280e+00)(1995, 1.759e+00)(5253, 1.737e+00)(13539, 1.632e+00)
};
  \addplot+[dashed] coordinates {
(7, 3.711e+00)(28, 2.578e+00)(90, 2.970e+00)(264, 8.643e+00)(738, 1.026e+01)(1995, 9.999e+00)(5253, 1.033e+01)(13539, 1.011e+01)
};
  \addplot+[dashed] coordinates {
(7, 4.586e+00)(28, 1.059e+01)(90, 4.722e+00)(264, 4.258e+00)(738, 4.394e+00)(1995, 3.813e+00)(5253, 4.088e+00)(13539, 3.783e+00)
};
\node  at (axis description cs:0,0) [text width=31pt, anchor=north east, align=center,xshift=25pt,yshift=1.4pt] {\scriptsize $|\bar{H}_{n}^6|$};
\node  at (axis description cs:0,0) [text width=31pt, anchor=north east, align=center,xshift=25pt,yshift=-13.5pt] {\scriptsize $n$};
\end{semilogxaxis}
\end{tikzpicture}
\ref{leghypcross6dcheb}
\caption{Oversampling factors of spatial discretizations consisting of cosine transformed rank\mbox{-}1 lattices for spans of Chebyshev polynomials with dyadic hyperbolic cross index sets $\bar H_n^6$ and the condition numbers of associated Chebyshev matrices.}
\label{fig:numtests_hypcross_cheb_d6}
\end{figure}

\subsection{Random index sets with fixed $d_s$}

According to Remark~\ref{rem:ds_dependence}, the dependencies of the cardinalities of the spatial discretizations mainly depend on the (maximal) number of interacting variables, i.e., on the (maximal) number of different variables $d_s<d$ on which the basis polynomials $T_\boldk$ depend. For a single $T_\boldk$ this number is given by $\|\boldk\|_0$.
In this numerical test, we will draw random index sets $I\subset[0,1024]^{25}$, where $\|\boldk\|_0=d_s$ is fixed for each $\boldk\in I$ in order to inspect the actual dependencies on $d_s$.
On the one hand, for fixed cardinality $|I|$ the larger $d_s$ is, the larger the set $\Mirror(I)$ is, i.e., $|\Mirror(I)|=2^{d_s}|I|$, which generally leads to a larger number of possible aliasing frequencies within $\Mirror(I)$ in the trigonometric system, cf.\ Lemma~\ref{lem:count_aliasing_probability_cheb}. On the other hand, for each $\boldk\in I$, we need only one column $a_\boldh$, $\boldh\in\Mirror(\{\boldk\})$, of $\FourMat(\ST,I)$, which fulfills the \NISOR condition, cf.\ Corollary~\ref{cor:matrix_C_full_crank}, i.e., which might decrease the necessary number of samples compared to the theoretical results in Theorem~\ref{thm:prob_bound_T_cheb} since this theoretical result guarantees that each specific $a_\boldk$ -- and not one out of  $\{a_\boldh \colon \boldh\in\Mirror(\{\boldk\})\}$ -- fulfills the \NISOR property. Clearly, for growing $d_s$ the cardinalities of the sets $\Mirror(\{\boldk\})$ increase and, thus, a larger number of columns $a_\boldh$ are available, of which only one should fulfill the \NISOR condition.

We show oversampling factors $|\SX|/|I|$ in Figure~\ref{fig:numtests_rand_cheb_d25_ds}
for different $d_s\in\{2,3,4,5,6,7,8,9\}$ and different $|I|\in\{2^k\colon k\in\N, 4\le k\le 16\}$. Here, we used the most sophisticated approach from Section~\ref{sssec:further_improvement}.
For fixed $d_s$, we observe (almost) stagnating oversampling factors.
As expected the oversampling factors increase with increasing $d_s$. However, the increase appears to be somewhat milder compared to the theory, i.e., $|\SX|/|I|\gtrsim 2^{d_s}$. In fact, all obtained oversampling factors fulfill $\frac{|\SX|/|I|}{2^{d_s}/d_s}\in (1,2)$, i.e., we observe oversampling factors that are at least a factor $d_s/2$ better than expected by theory.

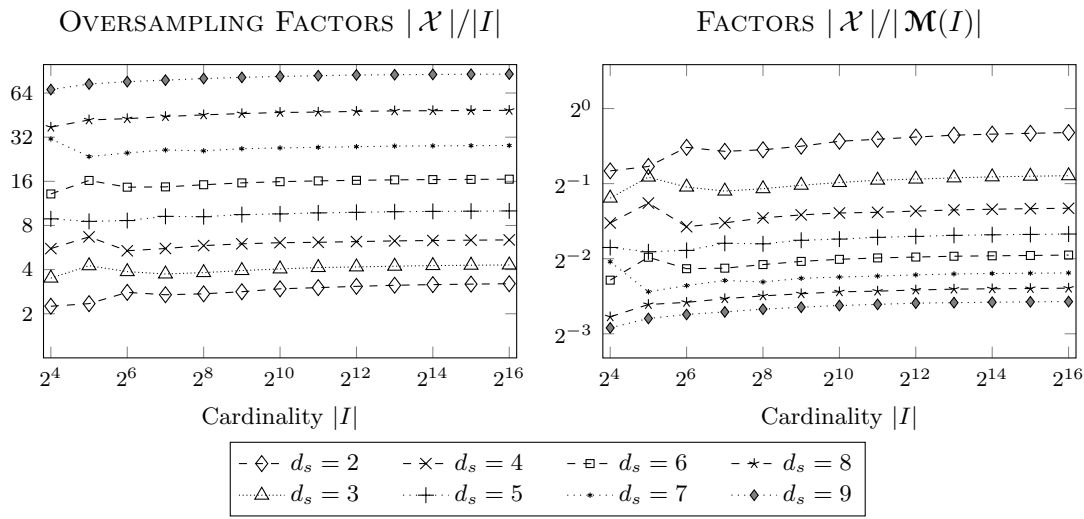
\begin{figure}
\centering
\begin{tikzpicture}[baseline=(current axis.south)]%
\begin{loglogaxis}[%
    clip=false,
    scale only axis,
    xmin=14,xmax=75000, ymin=1, ymax=100,
    title={\textsc{Oversampling Factors $|\SX|/|I|$}},
    xtick={16,64,256,1024,4096,16384,65536},
    ytick={2,4,8,16,32,64},
    yticklabels={2,4,8,16,32,64},
    x tick label style={align=center,font=\scriptsize},
    y tick label style={font=\scriptsize},
    xticklabels={$2^4$,$2^6$,$2^8$,$2^{10}$,$2^{12}$,$2^{14}$,$2^{16}$},
    x label style={font=\footnotesize},
    xlabel={Cardinality $|I|$},
    width=0.4\textwidth,
    height=0.25\textwidth,
legend entries={$d_s=2$, $d_s=3$, $d_s=4$, $d_s=5$, $d_s=6$, $d_s=7$, $d_s=8$, $d_s=9$},
    transpose legend,
    legend to name = legrandcheb,
    legend columns=2,
    legend style={font=\footnotesize, /tikz/every even column/.append style={column sep=0.5cm}},
    cycle list name=MR1LOF
    ]
  \addplot coordinates {
(16, 2.250e+00)(32, 2.344e+00)(64, 2.797e+00)(128, 2.695e+00)(256, 2.734e+00)(512, 2.826e+00)(1024, 2.958e+00)(2048, 3.014e+00)(4096, 3.075e+00)(8192, 3.128e+00)(16384, 3.160e+00)(32768, 3.184e+00)(65536, 3.205e+00)
};
  \addplot coordinates {
(16, 3.500e+00)(32, 4.250e+00)(64, 3.875e+00)(128, 3.734e+00)(256, 3.816e+00)(512, 3.941e+00)(1024, 4.042e+00)(2048, 4.131e+00)(4096, 4.172e+00)(8192, 4.218e+00)(16384, 4.260e+00)(32768, 4.284e+00)(65536, 4.302e+00)
};
  \addplot+[dashed] coordinates {
(16, 5.562e+00)(32, 6.688e+00)(64, 5.375e+00)(128, 5.570e+00)(256, 5.832e+00)(512, 5.996e+00)(1024, 6.103e+00)(2048, 6.139e+00)(4096, 6.209e+00)(8192, 6.278e+00)(16384, 6.316e+00)(32768, 6.352e+00)(65536, 6.375e+00)
};
\addplot+[dotted] coordinates {
(16, 8.875e+00)(32, 8.531e+00)(64, 8.641e+00)(128, 9.227e+00)(256, 9.180e+00)(512, 9.484e+00)(1024, 9.591e+00)(2048, 9.746e+00)(4096, 9.832e+00)(8192, 9.926e+00)(16384, 9.977e+00)(32768, 1.001e+01)(65536, 1.005e+01)
};
\addplot+[dashed] coordinates {
(16, 1.312e+01)(32, 1.622e+01)(64, 1.459e+01)(128, 1.466e+01)(256, 1.516e+01)(512, 1.561e+01)(1024, 1.591e+01)(2048, 1.611e+01)(4096, 1.625e+01)(8192, 1.637e+01)(16384, 1.644e+01)(32768, 1.649e+01)(65536, 1.655e+01)
};
\addplot+[dotted] coordinates {
(16, 3.112e+01)(32, 2.359e+01)(64, 2.497e+01)(128, 2.617e+01)(256, 2.582e+01)(512, 2.670e+01)(1024, 2.702e+01)(2048, 2.727e+01)(4096, 2.756e+01)(8192, 2.781e+01)(16384, 2.791e+01)(32768, 2.800e+01)(65536, 2.807e+01)
};
\addplot+[dashed] coordinates {
(16, 3.744e+01)(32, 4.194e+01)(64, 4.278e+01)(128, 4.423e+01)(256, 4.541e+01)(512, 4.632e+01)(1024, 4.716e+01)(2048, 4.743e+01)(4096, 4.795e+01)(8192, 4.834e+01)(16384, 4.849e+01)(32768, 4.863e+01)(65536, 4.876e+01)
};
\addplot+[dotted] coordinates {
(16, 6.756e+01)(32, 7.372e+01)(64, 7.647e+01)(128, 7.834e+01)(256, 8.032e+01)(512, 8.182e+01)(1024, 8.307e+01)(2048, 8.399e+01)(4096, 8.496e+01)(8192, 8.528e+01)(16384, 8.566e+01)(32768, 8.596e+01)(65536, 8.615e+01)
};
\end{loglogaxis}
\end{tikzpicture}
\begin{tikzpicture}[baseline=(current axis.south)]%
\begin{loglogaxis}[%
    clip=false,
    scale only axis,
    xmin=14,xmax=75000, ymin=0.1, ymax=1.5,
    title={\textsc{Factors $|\SX|/|\Mirror(I)|$}},
    xtick={16,64,256,1024,4096,16384,65536},
    ytick={0.125, 0.25,0.5,1},
    yticklabels={$2^{-3}$, $2^{-2}$,$2^{-1}$,$2^0$},
    x tick label style={align=center,font=\scriptsize},
    y tick label style={font=\scriptsize},
    xticklabels={$2^4$,$2^6$,$2^8$,$2^{10}$,$2^{12}$,$2^{14}$,$2^{16}$},
    x label style={font=\footnotesize},
    xlabel={Cardinality $|I|$},
    width=0.4\textwidth,
    height=0.25\textwidth,
    cycle list name=MR1LOF
    ]
  \addplot coordinates {
(16, 5.625e-01)(32, 5.859e-01)(64, 6.992e-01)(128, 6.738e-01)(256, 6.836e-01)(512, 7.065e-01)(1024, 7.395e-01)(2048, 7.534e-01)(4096, 7.688e-01)(8192, 7.820e-01)(16384, 7.900e-01)(32768, 7.960e-01)(65536, 8.013e-01)
};
  \addplot coordinates {
(16, 4.375e-01)(32, 5.312e-01)(64, 4.844e-01)(128, 4.668e-01)(256, 4.771e-01)(512, 4.927e-01)(1024, 5.052e-01)(2048, 5.164e-01)(4096, 5.215e-01)(8192, 5.273e-01)(16384, 5.324e-01)(32768, 5.355e-01)(65536, 5.378e-01)
};
  \addplot+[dashed] coordinates {
  (16, 3.477e-01)(32, 4.180e-01)(64, 3.359e-01)(128, 3.481e-01)(256, 3.645e-01)(512, 3.748e-01)(1024, 3.814e-01)(2048, 3.837e-01)(4096, 3.881e-01)(8192, 3.924e-01)(16384, 3.948e-01)(32768, 3.970e-01)(65536, 3.984e-01)
};
\addplot+[dotted] coordinates {
(16, 2.773e-01)(32, 2.666e-01)(64, 2.700e-01)(128, 2.883e-01)(256, 2.869e-01)(512, 2.964e-01)(1024, 2.997e-01)(2048, 3.046e-01)(4096, 3.072e-01)(8192, 3.102e-01)(16384, 3.118e-01)(32768, 3.129e-01)(65536, 3.140e-01)
};
\addplot+[dashed] coordinates {
(16, 2.051e-01)(32, 2.534e-01)(64, 2.280e-01)(128, 2.291e-01)(256, 2.368e-01)(512, 2.439e-01)(1024, 2.486e-01)(2048, 2.518e-01)(4096, 2.539e-01)(8192, 2.558e-01)(16384, 2.568e-01)(32768, 2.576e-01)(65536, 2.586e-01)
};
  \addplot+[dotted] coordinates {
(16, 2.432e-01)(32, 1.843e-01)(64, 1.951e-01)(128, 2.045e-01)(256, 2.017e-01)(512, 2.086e-01)(1024, 2.111e-01)(2048, 2.130e-01)(4096, 2.153e-01)(8192, 2.172e-01)(16384, 2.181e-01)(32768, 2.187e-01)(65536, 2.193e-01)
};
  \addplot+[dashed] coordinates {
(16, 1.462e-01)(32, 1.638e-01)(64, 1.671e-01)(128, 1.728e-01)(256, 1.774e-01)(512, 1.809e-01)(1024, 1.842e-01)(2048, 1.853e-01)(4096, 1.873e-01)(8192, 1.888e-01)(16384, 1.894e-01)(32768, 1.899e-01)(65536, 1.905e-01)
};
  \addplot+[dotted] coordinates {
(16, 1.320e-01)(32, 1.440e-01)(64, 1.494e-01)(128, 1.530e-01)(256, 1.569e-01)(512, 1.598e-01)(1024, 1.623e-01)(2048, 1.640e-01)(4096, 1.659e-01)(8192, 1.666e-01)(16384, 1.673e-01)(32768, 1.679e-01)(65536, 1.683e-01)
};
\end{loglogaxis}
\end{tikzpicture}
\ref{legrandcheb}
\caption{Oversampling factors of spatial discretizations consisting of cosine transformed rank\mbox{-}1 lattices for spans of Chebyshev polynomials with random index sets in $\N_0^{25}$ where the number $d_s$ of active dimensions is fixed.}
\label{fig:numtests_rand_cheb_d25_ds}
\end{figure}

Additionally, we plotted the factors $|\SX|/|\Mirror(I)|$ in order to discuss the advantages of the developed approaches compared to cosine transforming spatial discretizations of $\Pi(\Mirror(I))$. Clearly, a discretization of $\Pi(\Mirror(I))$ has at least a number $|\Mirror(I)|$ of samples. When using lattices for discretization here, the cosine transform leads to approximately half of this number of samples and the resulting sampling set provides a spatial discretization of $\ChebPol(I)$, cf.\ Remark~\ref{rem:remark_cards_cheb_lattices} and Theorem~\ref{thm:spatial_discr_per}. Accordingly, using the detour via spatial discretizations for $\Pi(\Mirror(I))$ we have to expect factors $|\SX|/|\Mirror(I)|$ of at least $1/2$.
Obviously, for the well adjusted approach from Section~\ref{sssec:further_improvement} we observe factors $|\SX|/|\Mirror(I)|$ that are below $1/2$ in the cases where $d_s>3$.
Therefore, the approach developed in this paper seems to be generally more sample efficient than detours via discretizations of trigonometric spaces -- at least when using known rank\mbox{-}1 lattice approaches.

\section*{Conclusion}%

The paper presents approaches for discretizing spaces of Chebyshev polynomials based on cosine transformed rank\mbox{-}1 lattices. The structure of these building blocks allows for an efficient reconstruction algorithm. Moreover, the spatial discretizations are relatively sample efficient, i.e., the numbers of used sampling nodes are efficient -- at least in terms of their complexity. In addition, all construction approaches have acceptable computational complexities. The numerical tests emphasize the differences of the presented improvement steps and indicate that each improvement step, that comes along with growing computational effort, reduces the cardinalities of the resulting spatial discretizations. In most cases, the cardinalities are considerably lower when using the more complex algorithms.

Moreover, it seems that the arising sampling matrices $\ChebMat(\SX,I)$ are well-conditioned, which means that the rows of the matrices form a well-suited frame, cf.~\cite{BaKaPoUl22}. Consequently, the number of samples could be further reduced by applying subsampling ideas from~\cite{BaKaPoUl22}, which would require additional computational effort that is probably disproportionately higher compared to the computational costs of the constructions we developed in this paper.

Anyway, the main goal of this paper was to construct efficient spatial discretizations of spaces of Chebyshev polynomails for use in a so-called dimension-incremental framework similar to that in~\cite{PoVo17} that adaptively builds up spaces of Chebyshev polynomials and uses their spatial discretizations for reconstructing projected polynomial coefficients. Algorithms based on the general ideas presented in this paper have already been used in numerical tests for the treatment of PDEs with random coefficients in~\cite{KaPoTa22}. 

Finally, the approaches developed in this paper can also be used for the spatial discretization of spans of finitely many half-period cosine basis functions. The resulting sets of rank\mbox{-}1 lattices just need to be tent transformed instead of cosine transformed, cf.~\cite{KuoMiNoNu19} for details on the relation of such subspaces of the so-called cosine spaces and spans of Chebyshev polynomials.

\subsection*{Acknowledgments}
The author gratefully acknowledges funding by the Deutsche Forschungsgemeinschaft (DFG, German Research Foundation) – project number 380648269.

{\scriptsize

}
\end{document}